\title[Principal eigenvalue of a linear elliptic operator with small/large diffusion]
{Asymptotic behavior of the principal eigenvalue of a linear second order
elliptic operator with small/large diffusion coefficient and its application}
\author[R. Peng, G. Zhang and M. Zhou
]{
Rui Peng${}^\dag$, Guanghui Zhang${}^{\S,*}$ and Maolin Zhou${}^\ddag$}
\thanks{2010 Mathematics Subject Classification. 35P15, 35J20, 35J55.}
\thanks{{\it Key words and phrases.} Principal eigenvalue; Elliptic operator; Small/large diffusion; Boundary Condition; Asymptotic behavior.}
\thanks{${}^\dag$School of Mathematics and Statistics, Jiangsu Normal University, Xuzhou, 221116, Jiangsu Province, China. (Email: {\tt pengrui\,$\b{}$\,seu@163.com})}
\thanks{${}^\S$School of Mathematics and Statistics, Huazhong University of Science and Technology, Wuhan,
430074, China.(Email: {\tt guanghuizhang@hust.edu.cn})}
\thanks{${}^*$Hubei Key Laboratory of Engineering Modeling and Scientific Computing, Huazhong University of Science and Technology, Wuhan 430074, China.}
\thanks{${}^\ddag$School of Science and Technology, University of New England, Armidale, NSW, 2351, Australia. (Email: {\tt mzhou6@une.edu.au})}
\thanks{R. Peng was supported by NSF of China (No. 11671175, 11571200), the Priority Academic Program Development of Jiangsu Higher Education Institutions, Top-notch Academic Programs Project of Jiangsu Higher Education Institutions (No. PPZY2015A013) and Qing Lan Project of Jiangsu Province, and G. Zhang was supported by NSF of China (No. 11501225) and the Fundamental Research Funds for the Central Universities (No. 5003011008).}
\date{\today}
\theoremstyle{definition}
\theoremstyle{plain}
\newtheorem{thm}{Theorem}[section]
\newtheorem{proposition}{Proposition}[section]
\newtheorem{cor}[proposition]{Corollary}
\newtheorem{lemma}{Lemma}[section]
\theoremstyle{definition}
\newtheorem{remark}{Remark}[section]
\begin{document}

\begin{abstract} In this article, we are concerned with the following eigenvalue problem of a linear second order elliptic operator:
 \begin{equation}
  \nonumber
  -D\Delta \phi -2\alpha\nabla m(x)\cdot \nabla\phi+V(x)\phi=\lambda\phi\ \ \hbox{ in }\Omega,
\end{equation}
complemented by a general boundary condition including Dirichlet boundary condition and Robin boundary condition:
 $$
 \frac{\partial\phi}{\partial n}+\beta(x)\phi=0 \ \ \hbox{ on  }\partial\Omega,
 $$
where $\beta\in C(\partial\Omega)$ allows to be positive, sign-changing or negative, and $n(x)$ is the unit exterior normal to $\partial\Omega$ at $x$. The domain $\Omega\subset\mathbb{R}^N$ is bounded and smooth, the constants $D>0$ and $\alpha>0$ are, respectively, the diffusive and advection coefficients, and $m\in C^2(\bar\Omega),\,V\in C(\bar\Omega)$ are given functions.

We aim to investigate the asymptotic behavior of the principal eigenvalue of the above eigenvalue problem as the diffusive coefficient $D\to0$ or $D\to\infty$. Our results, together with those of \cite{CL2,DF,Fr} where the Nuemann boundary case (i.e., $\beta=0$ on $\partial\Omega$) and Dirichlet boundary case were studied, reveal the important effect of advection and boundary conditions on the asymptotic behavior of the principal eigenvalue. We also apply our results to a reaction-diffusion-advection equation which is used to describe the evolution of a single species living in a heterogeneous stream environment and show some interesting behaviors of the species persistence and extinction caused by the buffer zone and small/large diffusion rate.
\end{abstract}

\maketitle

\section{Introduction}

In \cite{CL2}, Chen and Lou studied the following eigenvalue problem of
a linear second order elliptic operator with Neumann boundary condition:
\begin{equation}
  \label{eq:N}
\begin{cases}
  -D\Delta \phi -2\alpha\nabla m(x)\cdot \nabla\phi+V(x)\phi=\lambda\phi &\hbox{ in }\Omega,\\
  \frac{\partial\phi}{\partial n}=0 &\hbox{ on }\partial\Omega.
\end{cases}
\end{equation}
From now on, unless otherwise stated, we assume that $\Omega$ is a bounded smooth domain in $\mathbb{R}^N$, the
constants $D>0$ and $\alpha>0$ stand for the diffusive and advection
coefficients, respectively, $m\in C^2(\bar\Omega),\,V\in C(\bar\Omega)$ are given functions, and $n(x)$ is the unit exterior normal to $\partial\Omega$ at $x$.

Given $D>0,\,\alpha,\,m$ and $V$, it is well known that \eqref{eq:N} admits a smallest eigenvalue
(also called as {\it principal eigenvalue}), denoted by $\lambda(D)$, which corresponds to a positive
eigenfunction (called as {\it principal eigenfunction}). The principal eigenvalue is a basic concept in
the field of reaction-diffusion equations, and it
usually plays a vital role in the study of a nonlinear reaction-diffusion equation. In particular, the limiting behavior of
$\lambda(D)$ as $D\to\infty$ or $D\to0$ is important in order to obtain a good understanding of the
qualitative behavior of a reaction-diffusion equation under consideration.

For such a purpose, among several other ones, Chen and Lou established the following important result;
see \cite[Theorems 1.2 and 8.1]{CL2}.

\begin{thm}\label{theorem-N} Assume that ${\rm{det}}(D^2m(x))\not=0$ and $n(x)$ is an eigenvector of
$D^2m(x)$ for all $x\in\{x\in\partial\Omega:\ |\nabla m(x)|=0\}$. Then it holds
\begin{equation}\label{N-f}
\lim_{D\to 0}\lambda(D)=\min_{x\in\Sigma_{1}^*\cup\Sigma_{2}^*}\Big\{V(x)+\alpha\sum_{i=1}^N(|\kappa_i(x)|+\kappa_i(x))\Big\},
\end{equation}
where
 $$
 \Sigma_1^*=\{x\in\bar\Omega:\ |\nabla  m(x)|=0\},\ \ \Sigma_2^*=\{x\in\partial\Omega:\ |\nabla
  m(x)|=n(x)\cdot \nabla m(x)>0\},
  $$
and when $x\in\Sigma_1^*$, $\kappa_1(x), \kappa_2(x),\cdots,\kappa_N$ are the eigenvalues of
$D^2m(x)$, and when $x\in\Sigma_2^*$, $\kappa_N=0$, and $\kappa_1(x),\cdots,\kappa_{N-1}(x)$ are the eigenvalues of
$D^2m_{\partial\Omega}(x)$. Here, $m_{\partial\Omega}(x)$ is the restriction of $m(x)$ on $\partial\Omega$.

\end{thm}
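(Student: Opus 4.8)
The plan is to start from the variational characterization of $\lambda(D)$. Since $-D\Delta\phi-2\alpha\nabla m\cdot\nabla\phi=-De^{-2\alpha m/D}\,\mathrm{div}\bigl(e^{2\alpha m/D}\nabla\phi\bigr)$, problem \eqref{eq:N} is self-adjoint in $L^2\bigl(\Omega,e^{2\alpha m/D}dx\bigr)$ and
\[
\lambda(D)=\min_{0\ne\phi\in H^1(\Omega)}\ \frac{\int_\Omega\bigl(D|\nabla\phi|^2+V\phi^2\bigr)e^{2\alpha m/D}\,dx}{\int_\Omega\phi^2\,e^{2\alpha m/D}\,dx};
\]
equivalently, the substitution $\phi=e^{-\alpha m/D}\psi$ turns \eqref{eq:N} into the semiclassical Schr\"odinger problem $-D\Delta\psi+\bigl(V+\alpha\Delta m+\tfrac{\alpha^2}{D}|\nabla m|^2\bigr)\psi=\lambda\psi$ with Robin condition $\partial_n\psi=\tfrac{\alpha}{D}(\partial_n m)\psi$ on $\partial\Omega$. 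I would prove the two inequalities in \eqref{N-f} separately, using the first (weighted) form for the upper bound and the second (Schr\"odinger) form for the lower bound.

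\emph{Upper bound.} For each $x_0\in\Sigma_1^*\cup\Sigma_2^*$ I would exhibit a trial function concentrating at $x_0$ and read off $\limsup_{D\to0}\lambda(D)\le V(x_0)+\alpha\sum_i(|\kappa_i(x_0)|+\kappa_i(x_0))$; minimizing over $x_0$ then gives "$\le$" in \eqref{N-f}. If $x_0$ is an interior critical point of $m$, diagonalize $D^2m(x_0)$, write $m\approx m(x_0)+\tfrac12\sum_i\kappa_iy_i^2$ in the eigencoordinates, and take $\phi=\chi(y)\exp\bigl(-\tfrac1{2D}\sum_ia_iy_i^2\bigr)$ with $\chi$ a fixed cut-off near $x_0$; then $\phi^2e^{2\alpha m/D}$ is, to leading order, a Gaussian of variance $D/\bigl(2(a_i-\alpha\kappa_i)\bigr)$ in $y_i$, so the Rayleigh quotient tends to $V(x_0)+\sum_ia_i^2/\bigl(2(a_i-\alpha\kappa_i)\bigr)$, and optimizing over $a_i>\alpha\kappa_i$ gives exactly $V(x_0)+\alpha\sum_i(|\kappa_i|+\kappa_i)$ (the minimizer being $a_i=2\alpha\kappa_i$ if $\kappa_i>0$ and $a_i\to0$ if $\kappa_i<0$). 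If $x_0$ is a boundary zero of $\nabla m$, the hypothesis that $n(x_0)$ is an eigenvector of $D^2m(x_0)$ lets one choose the eigencoordinates compatibly with the half-space $\{y_N>0\}$, and the same computation (with half-Gaussians in $y_N$) goes through. If $x_0\in\Sigma_2^*$, write $m\approx m(x_0)-|\nabla m(x_0)|y_N+\tfrac12\sum_{i<N}\kappa_iy_i^2$ in boundary normal coordinates; the factor $e^{2\alpha m/D}$ produces an $O(D)$-thick boundary layer $\propto e^{-2\alpha|\nabla m(x_0)|y_N/D}$, which costs nothing extra because a trial function independent of $y_N$ is admissible there (this is the manifestation of $\kappa_N=0$), and the tangential directions are handled as before, yielding $V(x_0)+\alpha\sum_{i=1}^{N-1}(|\kappa_i|+\kappa_i)$.

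\emph{Lower bound.} Since $\min_{\bar\Omega}V\le\lambda(D)\le\max_{\bar\Omega}V$ (use $\phi\equiv1$ for the upper estimate), pick $D_k\to0$ with $\lambda(D_k)\to\Lambda$ and let $\psi_k>0$ be the corresponding principal eigenfunction of the Schr\"odinger problem, normalized by $\int_\Omega\psi_k^2=1$. The crucial step is a concentration estimate: because $\lambda(D_k)$ stays bounded while the potential $V+\alpha\Delta m+\tfrac{\alpha^2}{D_k}|\nabla m|^2$ blows up like $1/D_k$ on $\{|\nabla m|>0\}$, Agmon-type/tunnelling estimates force $\psi_k$ to be exponentially small away from $\Sigma_1^*\cup\Sigma_2^*$ — the only subtlety being that the Robin coefficient $\tfrac{\alpha}{D_k}\partial_n m$ becomes large and positive where $\partial_n m>0$, which is exactly what allows mass to accumulate in a boundary layer, and a further tangential tunnelling estimate inside that layer confines it to the critical points of $m_{\partial\Omega}$ there, i.e.\ to $\Sigma_2^*$. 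Once concentration on the compact set $\Sigma_1^*\cup\Sigma_2^*$ is in hand, I would apply an IMS-type localization: with a finite partition of unity $\sum_j\chi_j^2\equiv1$ adapted to this set (finitely many pieces, using the nondegeneracy hypothesis to ensure $\Sigma_1^*$ is discrete), $\lambda(D_k)=Q_{D_k}(\psi_k)\ge\sum_jQ_{D_k}(\chi_j\psi_k)-CD_k$, and each localized piece is bounded below by the bottom of the spectrum of the corresponding frozen-coefficient model — an anisotropic harmonic oscillator $-D\Delta+\tfrac{\alpha^2}{D}\langle(D^2m(x_j))^2y,y\rangle$ in the interior, with ground energy $\alpha\sum_i|\kappa_i|$, hence after restoring the $V(x_j)+\alpha\Delta m(x_j)$ shift giving $V(x_j)+\alpha\sum_i(|\kappa_i|+\kappa_i)$, and its half-space counterpart (with the $e^{-2\alpha|\nabla m|y_N/D}$ boundary mode) at points of $\Sigma_2^*$, contributing no extra leading term. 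This yields $\Lambda\ge\min_{\Sigma_1^*\cup\Sigma_2^*}\{V+\alpha\sum_i(|\kappa_i|+\kappa_i)\}$, and combined with the upper bound proves \eqref{N-f}.

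\emph{Main obstacle and role of the hypotheses.} The hard part is the lower bound, and within it the analysis near $\partial\Omega$: there the Robin coefficient degenerates in $D_k$ and changes sign across $\{\partial_n m=0\}\cap\partial\Omega$, so the quadratic form is not obviously bounded below uniformly in $D_k$, and one must carefully play off the negative boundary term $-\alpha\int_{\partial\Omega}(\partial_n m)\psi^2$ against the bulk term $\tfrac{\alpha^2}{D_k}\int_\Omega|\nabla m|^2\psi^2$ in a layer of width $\sim D_k/(\alpha\partial_n m)$ by means of trace inequalities; exactly at $\Sigma_2^*$ this trade-off is neutral, which is why those points enter with $\kappa_N=0$. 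The structural hypotheses are used as follows: $\det(D^2m)\ne0$ makes the critical points of $m$ nondegenerate, hence isolated (so the partition of unity is finite) and with all $\kappa_i\ne0$ (so the harmonic-oscillator models are genuinely confining and their ground energies are the stated ones); and the assumption that $n(x)$ is an eigenvector of $D^2m(x)$ at the boundary zeros of $\nabla m$ guarantees that near such a point the normal and tangential variables decouple, so the local model operator factorizes across $\{y_N>0\}$ and its ground energy is precisely $V(x_0)+\alpha\sum_{i=1}^N(|\kappa_i|+\kappa_i)$ — without this the local geometry at a boundary critical point would not split and the constant would in general be different.
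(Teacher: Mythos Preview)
This theorem is not proved in the paper; it is quoted from Chen--Lou \cite{CL2} as background (the paper says ``see \cite[Theorems 1.2 and 8.1]{CL2}''). So there is no ``paper's own proof'' to compare against. That said, the paper does prove the Dirichlet and Robin analogues (Theorems \ref{theorem1} and \ref{th1.2}) by explicitly adapting the Chen--Lou argument, so one can compare your outline to that methodology.

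Your framework---variational form, the substitution $\phi=e^{-\alpha m/D}\psi$, IMS localization, Gaussian trial functions---is exactly the scaffolding the paper (and \cite{CL2}) uses. The one noticeable technical difference is in how the local lower bound is obtained. You phrase it as comparison with a frozen-coefficient harmonic oscillator whose ground energy is $\alpha\sum_i|\kappa_i|$. The paper instead proves the key local inequality (its Lemma \ref{lemma3}, following \cite{CL2}) by a short algebraic trick: writing $E(W)=\sum_i\int D\,|e_i\cdot(\nabla W-\tfrac{\alpha}{D}W\nabla m)|^2$, completing the square with a sign flip on the $\nabla m$ term for each $i$ with $\kappa_i>0$, and integrating by parts the cross term $-2\alpha(e_i\cdot\nabla W^2)(e_i\cdot\nabla m)$ to pick up $2\alpha\kappa_i\int W^2$. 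This yields $E(W)\ge\alpha\bigl[\sum_i(|\kappa_i|+\kappa_i)-CR\bigr]\int W^2$ directly, with no appeal to spectral theory of model operators. Likewise, the concentration step is not done via Agmon-type tunnelling bounds but by the more elementary Lemma \ref{lemma1}/\ref{lemma3.1}, which in the Neumann/Robin case requires the boundary-layer argument you describe (trading the boundary term against the bulk $\tfrac{\alpha^2}{D}|\nabla m|^2$ via a trace inequality in a layer of width $R$; see the computation around \eqref{eq:3.5}). Your diagnosis of where the difficulty lies and how the structural hypotheses enter is accurate.
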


\begin{remark}\label{r-N} It is worthwhile to mention that the formula \eqref{N-f} may not be true provided that the technical assumption imposed on $m$ on the set $\Sigma_{1}^*\cap\partial\Omega$ in Theorem \ref{theorem-N} fails for the space dimension $N\geq2$; see \cite[Remarks 8.2, 8.3]{CL2}. However, one can easily check the proof of \cite[Theorem 8.1]{CL2} to see that Theorem \ref{theorem-N} remains true without such an assumption when $N=1$ (that is, $\Omega$ is a bounded open interval).

\end{remark}

In the two companion papers \cite{DF,Fr}, Friedman and his coauthors considered the following Dirichlet eigenvalue problem
 \begin{equation}
  \label{eq:1-D}
\begin{cases}
  -D\sum_{i,j=1}^Na_{ij}(x)\frac{\partial^2\phi}{\partial x_i\partial x_j}-\sum_{i=1}^Nb_{i}(x)\frac{\partial\phi}{\partial x_i}=\lambda\phi &\hbox{ in }\Omega,\\
  \phi=0 &\hbox{ on }\partial\Omega.
\end{cases}
\end{equation}
Assume that the matrix $(a_{ij}(x))$ is real, positive definite and symmetric for any $x\in\bar\Omega$, and $a_{ij},\,b_i\, (i=1,\cdots, N)$ are H$\ddot{o}$lder continuous functions on $\bar\Omega$. Denote $b(x)=(b_1(x),\cdots,b_N(x))$.

The main results of \cite{DF,Fr} can be collected as follows.

\begin{thm}\label{theorem1-D} Let $\lambda(D)$ be the principal eigenvalue of \eqref{eq:1-D}. The following assertions hold.
\begin{itemize}
\item[\rm{(i)}] If $b(x)\cdot n(x)<0$ for any $x\in\partial\Omega$, then
 $\lim_{D\to 0}\lambda(D)=0$ (exponentially fast).

\item[\rm{(ii)}] If there exists a function $\omega\in C^1(\bar\Omega)$ such that $b(x)\cdot{\rm grad}\,\omega>0$ for any $ x\in\bar\Omega$, then $\lim_{D\to 0}\lambda(D)=+\infty$.

\item[\rm{(iii)}] If there exists a point $x_0\in\Omega$ such that $b(x)=O(|x-x_0|^\nu)$ as $x\to x_0$ for some $\nu\geq0$, then $\limsup_{D\to 0}\lambda(D)\leq cD^{(\nu-1)/(\nu+1)}$ for some positive constant $c$.

\item[\rm{(iv)}] If there exist a point $x_0\in\Omega$ and a function $\omega\in C^2(\bar\Omega)$ such that
the matrix $(\omega_{ij})(x_0)$ is positive definite and $b(x)\,\cdot\,{\rm grad}\,\omega\geq \varrho|x-x_0|^{\nu+1},\,\forall x\in\bar\Omega$ for some constants $\varrho>0,\,1>\nu>0$, and $\liminf_{x\to x_0}{\rm div}\, b(x)>-\infty$, then $\liminf_{D\to 0}\lambda(D)\geq cD^{(\nu-1)/(\nu+1)}$ for some positive constant $c$.

\end{itemize}

\end{thm}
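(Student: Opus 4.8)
\emph{Proof strategy.} All four assertions are handled through two classical facts about the non-self-adjoint operator $L_{D}:=-D\sum_{i,j}a_{ij}(x)\partial_{ij}-b(x)\cdot\nabla$ under Dirichlet data. The first is a sub/supersolution criterion for the principal eigenvalue: if $\psi\in C^{2}(\bar\Omega)$ satisfies $\psi>0$ on $\bar\Omega$ and $(L_{D}-\mu)\psi\ge0$ in $\Omega$, then $\lambda(D)\ge\mu$; and if $\psi\ge0$, $\psi\not\equiv0$, $\psi=0$ on $\partial\Omega$, satisfies $(L_{D}-\mu)\psi\le0$ in $\Omega$ (with $\psi$ allowed to be Lipschitz and piecewise $C^{2}$, the inequality read distributionally), then $\lambda(D)\le\mu$. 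Both follow by multiplying the differential inequality by the adjoint principal eigenfunction $\phi^{*}>0$ (which vanishes on $\partial\Omega$) and integrating by parts twice; the only surviving boundary term is $\int_{\partial\Omega}D\psi\,n^{\mathsf T}a\nabla\phi^{*}$, which is $\le0$ by the Hopf lemma because $a$ is positive definite. The second is domain monotonicity, $\lambda(D;\Omega')\ge\lambda(D;\Omega)$ for $\Omega'\subset\Omega$, together with the standard a priori estimate that a uniformly elliptic operator with bounded drift on a fixed bounded domain has principal Dirichlet eigenvalue bounded above by a constant depending only on those data. (Equivalently, everything can be phrased through $1/\lambda(D)\asymp\sup_{x}\mathbb{E}_{x}[\tau_{\Omega}]$ for $dX=b(X)\,dt+\sqrt{2D}\,\sigma(X)\,dW$ with $\sigma\sigma^{\mathsf T}=a$, the viewpoint of \cite{DF,Fr}.) Finally, $\lambda(D)>0$ always, since $L_{D}$ has no zeroth order term.

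For (i) only the upper bound on $\lambda(D)$ is at issue. Let $d(x)=\mathrm{dist}(x,\partial\Omega)$; since $b\cdot n<0$ on $\partial\Omega$, continuity gives $b\cdot\nabla d\ge c_{0}>0$ in a boundary layer. Fix a small $\epsilon_{0}>0$ and a smooth function $\tilde d$ on $\bar\Omega$ equal to $d$ near $\partial\Omega$ and to a positive constant well inside $\Omega$, and set $\psi=1-e^{-\epsilon_{0}\tilde d/D}$. A direct computation gives $L_{D}\psi=e^{-\epsilon_{0}\tilde d/D}\big[\tfrac1D\big(\epsilon_{0}^{2}\langle a\nabla\tilde d,\nabla\tilde d\rangle-\epsilon_{0}\,b\cdot\nabla\tilde d\big)-\epsilon_{0}\,\mathrm{tr}(a\nabla^{2}\tilde d)\big]$: in the boundary layer the bracket is negative for $\epsilon_{0}$ small (this is precisely where $b\cdot n<0$ is used), well inside $\Omega$ one has $L_{D}\psi=0$, and in the intermediate region $\tilde d$ is bounded away from $0$, so the prefactor is $O(e^{-c/D})$ while $\psi\ge\tfrac12$. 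Hence $(L_{D}-\mu)\psi\le0$ for $\mu=Ce^{-c/D}$, and $0<\lambda(D)\le Ce^{-c/D}\to0$, i.e. exponentially fast. For (ii), extend $\omega\in C^{1}$ and mollify to $\omega_{\varepsilon}\in C^{2}$ with $b\cdot\nabla\omega_{\varepsilon}\ge\delta>0$ on $\bar\Omega$, and put $\psi=e^{-\gamma\omega_{\varepsilon}/D}$ with $\gamma>0$ small and fixed; then $L_{D}\psi=\psi\big[\tfrac\gamma D\big(b\cdot\nabla\omega_{\varepsilon}-\gamma\langle a\nabla\omega_{\varepsilon},\nabla\omega_{\varepsilon}\rangle\big)+\gamma\,\mathrm{tr}(a\nabla^{2}\omega_{\varepsilon})\big]\ge\tfrac{\gamma\delta}{4D}\psi$ once $\gamma$ and $D$ are small, and $\psi>0$ on $\bar\Omega$, so $\lambda(D)\ge\tfrac{\gamma\delta}{4D}\to+\infty$.

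For (iii), domain monotonicity gives $\lambda(D;\Omega)\le\lambda(D;B_{\rho}(x_{0}))$ with $\rho:=D^{1/(\nu+1)}$, which lies in $\Omega$ for $D$ small. The substitution $x=x_{0}+\rho y$ turns $L_{D}$ on $B_{\rho}(x_{0})$ into $D\rho^{-2}$ times $-\sum a_{ij}(x_{0}+\rho y)\partial_{y_{i}y_{j}}-\tfrac\rho D\,b(x_{0}+\rho y)\cdot\nabla_{y}$ on $B_{1}(0)$; since $D\rho^{-2}=D^{(\nu-1)/(\nu+1)}$ and $|\tfrac\rho D\,b(x_{0}+\rho y)|\le C\rho^{\nu+1}D^{-1}|y|^{\nu}=C|y|^{\nu}$ on $B_{1}$ (using $b(x)=O(|x-x_{0}|^{\nu})$ and $\rho^{\nu+1}=D$), the rescaled operator is uniformly elliptic with uniformly bounded drift, so its principal Dirichlet eigenvalue on $B_{1}$ is $\le C_{*}$ for all small $D$; hence $\lambda(D;\Omega)\le C_{*}D^{(\nu-1)/(\nu+1)}$.

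For (iv) one again builds a positive supersolution of $(L_{D}-\mu)\psi\ge0$ on $\bar\Omega$ with $\mu=cD^{(\nu-1)/(\nu+1)}$; with the ansatz $\psi=e^{-\Psi/D}$ this is the requirement $\tfrac1D\big(b\cdot\nabla\Psi-\langle a\nabla\Psi,\nabla\Psi\rangle\big)+\mathrm{tr}(a\nabla^{2}\Psi)\ge\mu$ in $\Omega$. On $\{|x-x_{0}|\ge\delta\}$ one has $b\cdot\nabla\omega\ge\varrho\delta^{\nu+1}>0$, so the choice $\Psi=\gamma\omega$ of (ii) makes the left side of order $1/D$, far above $\mu$. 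The issue is near $x_{0}$, where $b$ degenerates: the pointwise maximum of $p\mapsto b\cdot p-\langle ap,p\rangle$ equals $\tfrac14 b^{\mathsf T}a^{-1}b$ (attained at $p=\tfrac12 a^{-1}b$), so one lets the slow part of $\Psi$ follow $\tfrac12 a^{-1}b$; nondegeneracy of $(\omega_{ij})(x_{0})$ gives $|\nabla\omega(x)|\le C|x-x_{0}|$ near this critical point, hence (with $b\cdot\nabla\omega\ge\varrho|x-x_{0}|^{\nu+1}$) $|b(x)|\ge c|x-x_{0}|^{\nu}$ and $b^{\mathsf T}a^{-1}b\ge c|x-x_{0}|^{2\nu}$, while $\liminf_{x\to x_{0}}\mathrm{div}\,b>-\infty$ controls $\mathrm{tr}(a\nabla^{2}\Psi)$ from below. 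Because $b^{\mathsf T}a^{-1}b$ vanishes at $x_{0}$ itself, $\Psi$ must also carry a fast component: one takes $\Psi\approx\tfrac12\Phi+D\,g\big((x-x_{0})/D^{1/(\nu+1)}\big)$, where $\nabla\Phi\approx a^{-1}b$ absorbs the drift and $g$ is a fixed profile at the critical scale $D^{1/(\nu+1)}$ chosen so that the competition between the diffusive cost ($\sim D\cdot D^{-2/(\nu+1)}=D^{(\nu-1)/(\nu+1)}$) and the potential $\tfrac1{4D}b^{\mathsf T}a^{-1}b$ (likewise of that order at scale $D^{1/(\nu+1)}$) leaves a net gain of order $D^{(\nu-1)/(\nu+1)}$. (In the gradient case $b=a\nabla\Phi$ this is cleanest: $e^{\Phi/(2D)}L_{D}e^{-\Phi/(2D)}=-D\sum a_{ij}\partial_{ij}+W_{D}$ with $W_{D}=\tfrac1{4D}b^{\mathsf T}a^{-1}b+\tfrac12\mathrm{div}\,b+O(1)$, and minimizing its Rayleigh quotient over functions concentrated at scale $D^{1/(\nu+1)}$ about $x_{0}$ yields a value of order $D^{(\nu-1)/(\nu+1)}$.) Finally the inner and outer choices of $\Psi$ are glued across $\{|x-x_{0}|=\delta\}$, e.g. as the minimum of the two positive supersolutions — still a supersolution since $(L_{D}-\mu)$ is a proper elliptic operator — after matching constants. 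The heart of the proof, and its main obstacle, is this near-$x_{0}$ construction: pinning down the profile $g$ and the phase $\Phi$ (note $b$ is only H\"older, so it cannot be Taylor-expanded at $x_{0}$) and verifying the inequality through the matching; it is here that all three structural hypotheses are consumed — the nondegenerate Hessian of $\omega$ to secure $|b|\ge c|x-x_{0}|^{\nu}$, the lower bound on $\mathrm{div}\,b$ to keep the zeroth order term in the symmetrized operator in check, and $0<\nu<1$ so that the exponents $1/(\nu+1)$ and $(\nu-1)/(\nu+1)$ fall in the admissible ranges. Alternatively, (iv) can be run probabilistically, bounding $\sup_{x}\mathbb{E}_{x}[\tau_{\Omega}]$ from above by Freidlin--Wentzell / Lyapunov estimates resting on the same inequalities, following \cite{DF,Fr}.
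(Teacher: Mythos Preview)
The paper does not contain its own proof of this theorem. Theorem~\ref{theorem1-D} is stated purely as background: the sentence introducing it reads ``The main results of \cite{DF,Fr} can be collected as follows,'' and Remark~\ref{r-D-D}(ii) explicitly notes that ``the proof of Theorem~\ref{theorem1-D}(ii) is much more involved via probabilistic inequalities (see \cite{Fr}).'' So there is nothing in the paper to compare your attempt against; the authors simply quote the result and use it for contrast with their own Theorem~\ref{theorem1}, whose proof is variational and only covers the gradient-drift case $b=2\alpha\nabla m$.

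That said, your PDE/barrier approach is a legitimate alternative to the probabilistic arguments of \cite{DF,Fr}. Parts (i)--(iii) are essentially complete and standard: the boundary-layer subsolution for (i), the exponential supersolution $e^{-\gamma\omega/D}$ for (ii), and the scaling argument for (iii) are all clean. Part (iv), however, is only a sketch. You correctly identify the critical scale $D^{1/(\nu+1)}$ and the mechanism (balance between $\tfrac{1}{4D}b^{\mathsf T}a^{-1}b$ and the diffusive cost), but the actual construction of the inner profile $g$ and the phase $\Phi$ is not carried out, and the difficulties you flag --- $b$ is only H\"older so $a^{-1}b$ need not be a gradient, and the matching across $\{|x-x_0|=\delta\}$ --- are genuine obstacles, not formalities. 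The minimum-of-two-supersolutions gluing also requires care: $(L_D-\mu)$ has no zeroth-order term of the right sign, so the minimum of two classical supersolutions is only a viscosity supersolution, and one must check that this still forces $\lambda(D)\ge\mu$. In short, (iv) as written is a plausible outline but not a proof; the original argument in \cite{DF} handles exactly this delicate step by probabilistic means.
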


\begin{remark}\label{r-D1} When the space dimension $N=1$ and we take $\Omega=(0,1)$ without loss of generality, for the principal eigenvalue $\lambda(D)$ of problem \eqref{eq:1-D}, Theorem \ref{theorem1-D} concludes that $\lim_{D\to 0}\lambda(D)=+\infty$ if one of the conditions is fulfilled:
\begin{itemize}
\item[(i)]  $|b|>0$ on $[0,1]$.

\item[(ii)]  If $b<0$ in $[0,x_0)$, $b>0$ in $(x_0,1]$, and $|x-x_0|^{\nu}/\sigma\leq|b(x)|\leq\sigma|x-x_0|^{\nu},\,\forall x\in[0,1]$
for some $x_0\in(0,1)$ and constants $\sigma>1,\,1>\nu>0$.

\end{itemize}

\end{remark}

The objective of this paper is the asymptotic behavior of the principal eigenvalue with respect to small or large diffusion coefficient, of a class of eigenvalue problems under certain boundary conditions including the Dirichlet boundary condition and Robin boundary condition.
The existence and uniqueness of the principal eigenvalue of the eigenvalue problem
to be treated in the paper as well as its variational characterization are standard facts;
see, for instance, \cite[Chapter 2]{Du}.

We first investigate the Dirichlet eigenvalue problem
\begin{equation}
  \label{eq:1}
\begin{cases}
  -D\Delta \phi -2\alpha\nabla m(x)\cdot \nabla\phi+V(x)\phi=\lambda\phi &\hbox{ in }\Omega,\\
  \phi=0 &\hbox{ on }\partial\Omega.
\end{cases}
\end{equation}

Denote
 $$
 \Sigma_1=\{x\in \Omega:\ |\nabla m(x)|=0\},\ \ \Sigma_2=\{x\in\partial\Omega:\ |\nabla m(x)|=0\}.
  $$
Concerning \eqref{eq:1}, our result reads as follows.

\begin{thm}\label{theorem1} Let $\lambda(D)$ be the principal eigenvalue of \eqref{eq:1}. The following assertions hold.
\begin{itemize}
\item[\rm{(i)}] If $\Sigma_{1}\cup\Sigma_{2}=\emptyset$, then
 $\lim_{D\to 0}\lambda(D)=+\infty$.

\item[\rm{(ii)}] If $\Sigma_{1}\cup\Sigma_{2}\neq\emptyset$, and assume that for all
  $x\in\Sigma_{2}$, $n(x)$ is an eigenvector of $D^{2}m(x)$ with the corresponding eigenvalue $\kappa_{N}= 0$, then
\[
\lim_{D\to 0}\lambda(D)=\min_{x\in\Sigma_{1}\cup\Sigma_{2}}\Big\{V(x)+\alpha\sum_{i=1}^N(|\kappa_i(x)|+\kappa_i(x))\Big\},
\]
here $\kappa_1(x), \kappa_2(x),\cdots,\kappa_N$ are the eigenvalues of $D^2m(x)$.

\end{itemize}

\end{thm}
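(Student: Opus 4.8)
The plan is to derive both statements from a change of variables that converts \eqref{eq:1} into a self-adjoint (Schrödinger-type) eigenvalue problem, combined with the variational characterization of the principal eigenvalue. Setting $\phi=e^{(\alpha/D)m(x)}\psi$, a direct computation turns the operator $-D\Delta-2\alpha\nabla m\cdot\nabla+V$ acting on $\phi$ into the operator $\psi\mapsto -D\Delta\psi+\bigl(V-\tfrac{\alpha^2}{D}|\nabla m|^2-\alpha\Delta m\bigr)\psi$ acting on $\psi$, and the Dirichlet condition $\phi=0$ on $\partial\Omega$ is preserved as $\psi=0$ on $\partial\Omega$. Hence $\lambda(D)$ admits the Rayleigh-quotient representation
\[
\lambda(D)=\inf_{0\neq\psi\in H^1_0(\Omega)}\frac{\displaystyle\int_\Omega\Bigl(D|\nabla\psi|^2+\bigl(V-\tfrac{\alpha^2}{D}|\nabla m|^2-\alpha\Delta m\bigr)\psi^2\Bigr)\,dx}{\displaystyle\int_\Omega\psi^2\,dx}.
\]
First I would establish the upper bound: plugging in suitable test functions $\psi$ concentrated near a minimizing point $x_0\in\Sigma_1\cup\Sigma_2$, one reads off the limsup. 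Near an interior point of $\Sigma_1$ the natural scale is $\psi(x)=\eta\bigl((x-x_0)/\sqrt{D/\alpha}\bigr)$ times a Gaussian factor adapted to the Hessian $D^2m(x_0)$; the term $\tfrac{\alpha^2}{D}|\nabla m|^2$ contributes, after the quadratic expansion $\nabla m(x)\approx D^2m(x_0)(x-x_0)$, an oscillator potential whose ground-state energy is exactly $\alpha\sum_i(|\kappa_i|+\kappa_i)$ (the positive eigenvalues double, the negative ones are ``pushed down'' but the Dirichlet-type concentration forbids escaping, so only $|\kappa_i|$ survives after balancing against the $D|\nabla\psi|^2$ kinetic term), while $V$ and $-\alpha\Delta m$ combine to give the remaining $V(x_0)+\alpha\sum_i\kappa_i$ after noting $\Delta m=\sum_i\kappa_i$. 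For a boundary minimizer $x_0\in\Sigma_2$, the hypothesis that $n(x_0)$ is an eigenvector of $D^2m(x_0)$ with eigenvalue $0$ means $m$ is, to second order, flat in the normal direction, so the half-space problem decouples: the tangential directions see the oscillator governed by $D^2m_{\partial\Omega}(x_0)$ and contribute $\alpha\sum_{i=1}^{N-1}(|\kappa_i|+\kappa_i)$, the normal direction (with $\kappa_N=0$) contributes nothing, and one recovers the same formula.

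Next I would prove the matching lower bound, which I expect to be the main obstacle. The idea is a localization/partition-of-unity argument: cover $\bar\Omega$ by finitely many small balls; on balls whose closure misses $\Sigma_1\cup\Sigma_2$ we have $|\nabla m|\geq\delta>0$, so the potential $-\tfrac{\alpha^2}{D}|\nabla m|^2$ is of order $-1/D$ there, but this works \emph{in our favor} only for the upper bound — for the lower bound on those pieces one instead uses that the Dirichlet form controls the eigenvalue from below on regions where $|\nabla m|$ is bounded away from zero via an Agmon-type exponential-decay estimate, forcing the principal eigenfunction's mass to concentrate near $\Sigma_1\cup\Sigma_2$ as $D\to0$; this is precisely the mechanism behind assertion~(i), where $\Sigma_1\cup\Sigma_2=\emptyset$ forces $\lambda(D)\to+\infty$ since then $V-\tfrac{\alpha^2}{D}|\nabla m|^2-\alpha\Delta m\to+\infty$ uniformly would be false, so one must argue more carefully that the kinetic energy cost of living in $\Omega$ with $\psi=0$ on $\partial\Omega$ together with $|\nabla m|\geq\delta$ everywhere gives a lower bound growing like $c/D$ — in fact like the first Dirichlet eigenvalue of a rescaled problem, which diverges. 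On the remaining balls, centered near points of $\Sigma_1\cup\Sigma_2$, one compares with the model oscillator/half-space oscillator and uses its explicit ground-state energy as a lower barrier, incurring an error $o(1)$ from the quadratic approximation of $m$.

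Finally I would assemble the pieces. For (i): the Agmon estimate plus the everywhere-positivity of $|\nabla m|$ yields $\lambda(D)\ge c/D-C\to+\infty$. For (ii): the upper bound from the test-function construction and the lower bound from localization both converge to $\min_{x\in\Sigma_1\cup\Sigma_2}\{V(x)+\alpha\sum_i(|\kappa_i(x)|+\kappa_i(x))\}$, so $\lambda(D)$ does too. Two technical points deserve care and I would address them explicitly: (a) the set $\Sigma_1$ need not be discrete, so ``concentration near $\Sigma_1$'' must be phrased in terms of neighborhoods and the infimum over $x\in\Sigma_1$ of the Hessian-determined energy is attained by continuity on the compact set $\overline{\Sigma_1\cup\Sigma_2}$ — here one should note that the stated formula already presumes, via the problem's structure, enough regularity of the eigenvalues $\kappa_i(x)$; and (b) at boundary points the geometry of $\partial\Omega$ (its second fundamental form) a priori enters the half-space model, but the eigenvector hypothesis on $n(x)$, together with the Dirichlet condition killing the normal mode, is exactly what makes the curvature of $\partial\Omega$ drop out of the limit — this is the crucial structural input distinguishing the Dirichlet case here from the Neumann case of Theorem~\ref{theorem-N}, where $\Sigma_2^*$ instead picks up an inflow condition $n\cdot\nabla m>0$.
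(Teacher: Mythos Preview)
Your overall architecture---substitution to a self-adjoint problem, variational characterization, Gaussian test functions for the upper bound, and a partition-of-unity localization for the lower bound---matches the paper's exactly. But there is a sign error in your substitution that propagates and produces the artificial obstacles you flag.

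The correct substitution is $\phi=e^{-(\alpha/D)m}\psi$ (equivalently $\psi=e^{(\alpha/D)m}\phi$), not $\phi=e^{(\alpha/D)m}\psi$. With the right sign one gets
\[
-D\Delta\psi+\Bigl(V+\tfrac{\alpha^{2}}{D}|\nabla m|^{2}+\alpha\Delta m\Bigr)\psi=\lambda\psi,
\]
so the potential is $+\tfrac{\alpha^{2}}{D}|\nabla m|^{2}$, not $-\tfrac{\alpha^{2}}{D}|\nabla m|^{2}$. This single sign flip dissolves the difficulties you describe. For part~(i), once $|\nabla m|\ge\delta>0$ everywhere, the potential is bounded below by $\tfrac{\alpha^{2}\delta^{2}}{D}-C$, and since $D\int|\nabla\psi|^{2}\ge0$ the Rayleigh quotient itself is $\ge\tfrac{\alpha^{2}\delta^{2}}{D}-C\to+\infty$: no Agmon estimate, no rescaled Dirichlet eigenvalue is needed, and indeed the kinetic term $D\int|\nabla\psi|^{2}$ you invoke goes to $0$, not $+\infty$, as $D\to0$. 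Likewise, for the lower bound in~(ii) the positive potential forces concentration near $\Sigma_{1}\cup\Sigma_{2}$ directly, not via exponential decay. Your oscillator bookkeeping also becomes clean: near $x_{0}\in\Sigma_{1}$ the model operator is $\alpha(-\Delta_{y}+\sum_{i}\kappa_{i}^{2}y_{i}^{2})$ with ground-state energy $\alpha\sum_{i}|\kappa_{i}|$, and adding $\alpha\Delta m(x_{0})=\alpha\sum_{i}\kappa_{i}$ (with the plus sign) and $V(x_{0})$ gives the stated limit.

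One further remark: the paper works with the equivalent ``gauge'' form
\[
\lambda(D)=\inf_{w\in H^{1}_{0},\ \int w^{2}=1}\int_{\Omega}D\Bigl|\nabla w-\tfrac{\alpha}{D}w\nabla m\Bigr|^{2}+Vw^{2},
\]
rather than the expanded Schr\"odinger form. This is algebraically the same (integrate by parts using $w|_{\partial\Omega}=0$), but it makes the local lower bounds near critical points (Lemma~2.3) transparent via a sign trick on each eigendirection, avoiding any appeal to the explicit harmonic-oscillator spectrum. For the upper bound near $x_{0}\in\Sigma_{2}$, the paper's test function is not quite the half-space Gaussian you sketch: to enforce $w\in H^{1}_{0}$, the Gaussian in the normal direction is shifted into the interior and truncated, and the hypothesis $\kappa_{N}=0$ is exactly what makes that shift cost nothing in the limit.
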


We would like to make the following comments on Theorem \ref{theorem1}.
\begin{remark}\label{r-D-D}
\begin{itemize}

\item[(i)] We want to stress that in Theorem \ref{theorem-N}, we take $\kappa_N=0$, and $\kappa_1(x),\cdots,\kappa_{N-1}(x)$ are the $N-1$ eigenvalues of $D^2m_{\partial\Omega}(x)$ for any $x\in\Sigma_{2}^*$; while in Theorem \ref{theorem1}(ii), for any $x\in\Sigma_{2}$, we assume that $(0,n(x))$ is an eigenpair of $D^2m(x)$, and thus $\kappa_1(x), \kappa_2(x),\cdots,\kappa_N=0$ are all the eigenvalues of $D^2m(x)$.

\item[(ii)] Clearly, Theorem \ref{theorem1-D}(ii) covers Theorem \ref{theorem1}(i); we shall provide an elementary proof for Theorem \ref{theorem1}(i); the proof of Theorem \ref{theorem1-D}(ii) is much more involved via probabilistic inequalities (see \cite{Fr}).

\item[(iii)] Theorem \ref{theorem1}(ii) shows that $\lim_{D\to 0}\lambda(D)$ must be finite once $\Sigma_{1}\cup\Sigma_{2}\neq\emptyset$ under the assumption of $m\in C^2(\bar\Omega)$. However, Theorem \ref{theorem1-D}(iv) tells us that
    $\lim_{D\to 0}\lambda(D)$ may be positive infinity even if $\Sigma_{1}\cup\Sigma_{2}\neq\emptyset$ provided that $m\in C^{1+\nu}(\bar\Omega)$ for some $0<\nu<1$; also see Remark \ref{r-D1}. Therefore, this implies that the smoothness of the advection $m$ is also vital in determining $\lim_{D\to 0}\lambda(D)$.

\end{itemize}

\end{remark}

We next consider the eigenvalue problem equipped with Robin boundary condition:
\begin{equation}
  \label{eq:3.1}
\begin{cases}
  -D\Delta \phi -2\alpha\nabla m(x)\cdot \nabla\phi+V(x)\phi=\mu\phi &\hbox{ in }\Omega,\\
  \frac{\partial\phi}{\partial n}+k\beta(x)\phi=0 &\hbox{ on
  }\partial\Omega,
\end{cases}
\end{equation}
where $\beta\in C(\partial{\Omega})$ is a given function and $k$ is a
nonnegative constant. Note that $\beta$ allows to change sign or be positive, or be negative over $\partial\Omega$;
one may refer to \cite{FK,LOS,PP} and the references therein for related background and research, especially when
$\beta$ is negative.

Let $\Sigma_1$ and $\Sigma_2$ be as before, and set
 $$
 \Sigma_3=\{x\in\partial\Omega:\ |\nabla m(x)|=\nabla m(x)\cdot n(x)>0\}.
 $$
Concerning the eigenvalue problem \eqref{eq:3.1}, our result can be stated as follows.

\begin{thm}\label{th1.2} Let $\lambda(D)$ be the principal eigenvalue of \eqref{eq:3.1}.
  Assume that $\mathrm{det}(D^{2}m(x))\neq 0$ and $n(x)$ is an eigenvector of
  $D^{2}m(x)$ for all $x\in\Sigma_{2}$. Then it holds
\begin{align*}
  \lim_{D\to 0}\lambda(D)&=\min\Big\{\min_{x\in\Sigma_{1}\cup\Sigma_{2}}\big\{V(x)+\alpha\sum_{i=1}^N(|\kappa_i(x)|+\kappa_i(x))\big\},\\
                              &\ \ \ \ \min_{x\in\Sigma_{3}}\big\{V(x)+\alpha\sum_{i=1}^N(|\kappa_i(x)|+\kappa_i(x))+2\alpha k \beta(x_{0})|\nabla m(x_0)|\big\}\Big\}.
\end{align*}
Here, when $x\in\Sigma_{1}\cup\Sigma_{2}$,
$\kappa_1(x), \kappa_2(x),\cdots,\kappa_N$ are the eigenvalues of
$D^2m(x)$; when $x\in\Sigma_{3}$, $\kappa_{N}=0$ and
$\kappa_1(x), \kappa_2(x),\cdots,\kappa_{N-1}(x)$ are the eigenvalues of
$D^2m_{\partial\Omega}(x)$.
\end{thm}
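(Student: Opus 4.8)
The plan is to establish the limit by proving matching upper and lower bounds for $\lambda(D)$ as $D\to 0$, using the variational (Rayleigh-quotient) characterization of the principal eigenvalue together with suitable test functions and local asymptotic analysis. Write $\mathcal{R}_D[\phi]$ for the Rayleigh quotient associated to \eqref{eq:3.1}; because $\beta$ may be sign-changing, the quadratic form contains the boundary term $Dk\int_{\partial\Omega}\beta(x)\phi^2$, and one should first symmetrize the operator. Multiplying the equation by the weight $e^{-2\alpha m(x)/D}$ turns the operator into self-adjoint form $-D\,\mathrm{div}(e^{-2\alpha m/D}\nabla\phi)+Ve^{-2\alpha m/D}\phi=\lambda e^{-2\alpha m/D}\phi$, so the relevant quadratic form is $\int_\Omega e^{-2\alpha m/D}(D|\nabla\phi|^2+V\phi^2)+Dk\int_{\partial\Omega}\beta e^{-2\alpha m/D}\phi^2$ over the denominator $\int_\Omega e^{-2\alpha m/D}\phi^2$. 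The weight concentrates mass near the maxima of $m$, which is exactly why the critical sets $\Sigma_1,\Sigma_2,\Sigma_3$ appear, and why on $\Sigma_3$ (boundary points where $\nabla m$ points strictly outward, so $m$ has no interior critical point nearby but the boundary still "attracts") the extra contribution $2\alpha k\beta|\nabla m|$ enters through the boundary integral.

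First I would prove the upper bound. For each candidate point $x_0$ — whether in $\Sigma_1\cup\Sigma_2$ or in $\Sigma_3$ — I would construct a concentrating test function supported in a small neighborhood of $x_0$. Near a point of $\Sigma_1\cup\Sigma_2$ one takes (after using the eigenvector hypothesis on $n(x)$ to straighten the boundary and diagonalize $D^2 m$) a Gaussian-type profile in the rescaled variable $y=(x-x_0)/\sqrt D$, matching the quadratic approximation $m(x)\approx m(x_0)+\tfrac12\sum\kappa_i(x_0)(x_i-x_0^i)^2$; the Gaussian integral then produces the oscillator eigenvalue $\alpha\sum_i(|\kappa_i|+\kappa_i)$, reproducing exactly the computation behind Theorems \ref{theorem-N} and \ref{theorem1}. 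Near a point of $\Sigma_3$ one does the same but now the half-space geometry forces the profile to live against the boundary; the outward linear growth of $m$ in the normal direction contributes a boundary-layer term, and here the boundary integral $Dk\int_{\partial\Omega}\beta e^{-2\alpha m/D}\phi^2$, after rescaling, contributes precisely $2\alpha k\beta(x_0)|\nabla m(x_0)|$ — this is the new phenomenon compared to the Dirichlet/Neumann cases. Taking the infimum over all admissible $x_0$ gives the "$\le$" half.

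For the lower bound I would argue that the principal eigenfunction $\phi_D$ (normalized in the weighted $L^2$ norm) must concentrate, as $D\to0$, near $\Sigma_1\cup\Sigma_2\cup\Sigma_3$: away from these sets either $|\nabla m|>0$ pointing inward — where a drift/maximum-principle argument (as in Theorem \ref{theorem1}(i)) forces the local eigenvalue contribution to blow up — or the weight $e^{-2\alpha m/D}$ is exponentially negligible compared to its mass near the maximizing critical points. Then a localization/partition-of-unity argument reduces the estimate to the behavior in small balls around each candidate point, where I would compare with a model constant-coefficient problem on a half-space or full space (the rescaled harmonic oscillator with the appropriate Robin datum), whose principal eigenvalue is bounded below by $V(x_0)+\alpha\sum(|\kappa_i|+\kappa_i)$ (plus the boundary term on $\Sigma_3$) minus an error that vanishes as the ball shrinks and $D\to0$. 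Uniform elliptic estimates and the smoothness $m\in C^2$ control the passage from the true coefficients to the frozen ones.

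I expect the main obstacle to be the analysis near $\Sigma_3$, i.e.\ the boundary points where $\nabla m$ is nonzero and strictly outward-pointing. Unlike the interior critical points, here the eigenfunction develops a genuine boundary layer of width $O(D)$ (not $O(\sqrt D)$) in the normal direction superimposed on the $O(\sqrt D)$ tangential Gaussian, so the correct rescaling is anisotropic; getting the constant $2\alpha k\beta(x_0)|\nabla m(x_0)|$ exactly right — with the correct sign when $\beta<0$, and making sure the boundary quadratic form does not destroy coercivity — requires a careful matched-asymptotics computation and a delicate choice of test function, together with a matching lower bound that handles the indefinite boundary term. The sign-changing $\beta$ also means one cannot simply drop the boundary term, so trace inequalities with $D$-dependent constants must be used with care; this is the technically heaviest part of the argument.
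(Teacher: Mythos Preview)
Your overall architecture---variational characterization, explicit concentrating test functions for the upper bound, partition-of-unity localization for the lower bound, and the anisotropic $O(\sqrt D)$ tangential / $O(D)$ normal scaling at points of $\Sigma_3$---is exactly the route the paper takes, and your identification of the boundary-layer computation at $\Sigma_3$ as the delicate step is accurate.

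Two corrections are needed before your outline becomes a proof. First, the symmetrizing weight has the wrong sign: the operator $-D\Delta-2\alpha\nabla m\cdot\nabla$ becomes $-D\,e^{-2\alpha m/D}\mathrm{div}(e^{+2\alpha m/D}\nabla\cdot)$, so the Rayleigh quotient carries $e^{+2\alpha m/D}$, not $e^{-2\alpha m/D}$. Second, and more substantively, the heuristic ``the weight concentrates near the maxima of $m$'' is not the mechanism that singles out $\Sigma_1\cup\Sigma_2\cup\Sigma_3$: local minima and saddles of $m$ lie in $\Sigma_1$ too, and can realize the limiting eigenvalue (indeed $\sum(|\kappa_i|+\kappa_i)=0$ at a local minimum). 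The paper instead substitutes $w=e^{\alpha m/D}\phi$ and shows, via an integration-by-parts identity (its Lemma~\ref{lemma3.1}), that the mass of $w^2$ must concentrate on the set where $|\nabla m|^2-[(1-d/R)^+(\nabla m\cdot n)^+]^2$ vanishes, which is precisely a neighborhood of $\Sigma_1\cup\Sigma_2\cup\Sigma_3$. This replaces your proposed dichotomy ``$\nabla m$ points inward / weight is exponentially negligible'' by a single coercivity estimate that works uniformly and handles the sign-indefinite $\beta$ without separate trace inequalities; you will need this ingredient, or something equivalent, for the lower bound to go through at non-maximal critical points.
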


 It is easily seen that $\Sigma_1\cup\Sigma_2\cup\Sigma_3=\Sigma_1^*\cup\Sigma_2^*\not=\emptyset$. In order to obtain Theorems \ref{theorem1} and \ref{th1.2}, our approach mainly follows that of \cite{CL2}, which heavily relies on the variational structure of the problems under consideration; nevertheless, some nontrivial ingredients are introduced here to overcome the difficulties caused by the boundary conditions.  Notice that the eigenvalue problem \eqref{eq:1-D} has no variational structure in general; some very different approaches were used in \cite{DF,Fr} to derive Theorem \ref{theorem1-D}.

There are close connections and delicate differences among Theorems \ref{theorem-N}, \ref{theorem1} and \ref{th1.2}. In particular, we would like to make the following comments.

\begin{remark}\label{r-D2}

\begin{itemize}
\item[\rm{(i)}] Theorem \ref{th1.2} reduces to Theorem \ref{theorem-N} if $k=0$.

\item[\rm{(ii)}]  Different from Theorem \ref{theorem1}(i) for the Dirichlet boundary problem \eqref{eq:1}, $\lim_{D\to 0}\lambda(D)$ is
finite for the Robin boundary problem \eqref{eq:3.1} even if $\Sigma_{1}\cup\Sigma_{2}=\emptyset$;
however, $\lim_{D\to 0}\lambda(D)\to+\infty$ as $k\to\infty$ in Theorem \ref{th1.2} provided that
$\Sigma_{1}\cup\Sigma_{2}=\emptyset$ and $\beta>0$ on $\partial\Omega$.
On the other hand, if $\beta>0$ on $\partial\Omega$ and $k$ is properly large in problem \eqref{eq:3.1}, the limit formula in Theorem \ref{th1.2} coincides with the one in Theorem \ref{theorem1}(ii). Hence, Theorem \ref{theorem1} can be regarded as the limiting behavior of Theorem \ref{th1.2} as $k\to\infty$.

\item[\rm{(iii)}] One should observe that for the Dirichlet boundary problem \eqref{eq:1}, the set $\Sigma_3$ does not affect the limit $\lim_{D\to 0}\lambda(D)$ (when it is finite); this is very different from the behavior of the principal eigenvalue for the Neumann boundary problem and the Robin boundary problem \eqref{eq:3.1} as stated by Theorems \ref{theorem-N} and \ref{th1.2}.

\item[\rm{(iv)}] As in Remark \ref{r-N}, when $N=1$ and $\Omega$ is a bounded open interval,
Theorem \ref{th1.2} remains valid without the assumption
that $\mathrm{det}(D^{2}m(x))\neq 0$ and $n(x)$ is an eigenvector of
  $D^{2}m(x)$ for all $x\in\Sigma_{2}$.

\end{itemize}
\end{remark}

Finally, we study the asymptotic behavior of the principal eigenvalue as $D\to +\infty$ of the eigenvalue problem
\begin{equation}
  \label{eq:4.1}
\begin{cases}
  -D\Delta \phi -2\alpha\nabla m(x)\cdot \nabla\phi+V(x)\phi=\lambda\phi &\hbox{ in }\Omega,\\
  \frac{\partial\phi}{\partial n}+\beta(x)\phi=0 &\hbox{ on }\partial\Omega,
\end{cases}
\end{equation}
where $\beta\in C(\partial\Omega)$ is a given function.

If $\beta=0$, it is well known that
 $$
 \lim_{D\to+\infty}\lambda(D)=\frac{1}{|\Omega|}\int_\Omega V.
 $$
We aim to explore the asymptotic behavior of the principal eigenvalue as $D\to +\infty$ in the general setting above.

Let $\mu_1$ be the principal eigenvalue of the following eigenvalue problem
\begin{equation}
  \label{eq:4.2}
\begin{cases}
  -\Delta \phi=\mu\phi &\hbox{ in }\Omega,\\
  \frac{\partial\phi}{\partial n}+\beta(x)\phi=0 &\hbox{ on
  }\partial\Omega.
\end{cases}
\end{equation}

We now state the last main result of this paper.
\begin{thm}\label{th1.3} Let $\lambda(D)$ be the principal eigenvalue of \eqref{eq:4.1}. The following assertions hold.
\begin{itemize}
\item[\rm{(i)}] If $\mu_1>0$, then $\lim_{D\to
    +\infty}\lambda(D)=+\infty$.

\item[\rm{(ii)}] If $\mu_1<0$, then $\lim_{D\to
    +\infty}\lambda(D)=-\infty$.

\item[\rm{(iii)}] If $\mu_1=0$, then
  \[
    \lim_{D\to
      +\infty}\lambda(D)=\int_{\Omega}\left(V\phi_{0}^{2}-2\alpha\nabla
    m\cdot\nabla\phi_{0}\right),
  \]
  where $\phi_{0}$ is the principal eigenfunction of
  \eqref{eq:4.2} corresponding to $\mu_1=0$ such that $\int_{\Omega}\phi_{0}^{2}=1$.

\end{itemize}

In particular, {\rm (i)} holds if $\beta\geq,\not\equiv0$, {\rm (ii)} holds if either $\int_{\partial\Omega}\beta<0$ or $\int_{\partial\Omega}\beta=0$ and $\beta\not\equiv0$, and {\rm (iii)} holds if $\beta\equiv0$.

\end{thm}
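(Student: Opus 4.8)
The plan is to use the variational characterization of $\lambda(D)$ together with a rescaling of the Rayleigh quotient that isolates the Dirichlet form $\int_\Omega |\nabla\phi|^2$ governed by $\mu_1$. Recall that
\[
\lambda(D)=\inf_{0\neq\phi\in H^1(\Omega)}\frac{D\int_\Omega|\nabla\phi|^2+D\int_{\partial\Omega}\beta\phi^2-2\alpha\int_\Omega\nabla m\cdot\nabla\phi\,\phi+\int_\Omega V\phi^2}{\int_\Omega\phi^2},
\]
where the term $-2\alpha\int_\Omega\nabla m\cdot\nabla\phi\,\phi=\alpha\int_\Omega\Delta m\,\phi^2-\alpha\int_{\partial\Omega}(\nabla m\cdot n)\phi^2$ after integration by parts, so the advection contributes a bounded zeroth-order perturbation; write $\tilde V:=V+\alpha\Delta m$ and $\tilde\beta:=\beta\,$ modified on the boundary by $-\alpha(\nabla m\cdot n)/D$, but it is cleaner to keep $\alpha$ separate and simply note the whole $\alpha$-block is $O(1)$ in $D$. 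The key observation is that $D\big(\int_\Omega|\nabla\phi|^2+\int_{\partial\Omega}\beta\phi^2\big)\geq D\mu_1\int_\Omega\phi^2$ with equality asymptotically attained near $\phi_0$.

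First I would prove (i): if $\mu_1>0$, then for every $\phi$ the numerator is at least $D\mu_1\int_\Omega\phi^2-C\int_\Omega\phi^2$ for a constant $C$ bounding $|\tilde V|$ and the $\alpha$-terms (using that $|2\alpha\int\nabla m\cdot\nabla\phi\,\phi|$, after integration by parts, is $\leq C\int\phi^2$), hence $\lambda(D)\geq D\mu_1-C\to+\infty$. For (ii): if $\mu_1<0$, test the quotient with the principal eigenfunction $\phi_0$ of \eqref{eq:4.2}; then $D\big(\int|\nabla\phi_0|^2+\int_{\partial\Omega}\beta\phi_0^2\big)=D\mu_1$, and the remaining terms are fixed constants, so $\lambda(D)\leq D\mu_1+C'\to-\infty$. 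For (iii): the upper bound follows by testing with $\phi_0$ (normalized), which kills the $D$-term exactly since $\mu_1=0$, giving $\lambda(D)\leq\int_\Omega(V\phi_0^2-2\alpha\nabla m\cdot\nabla\phi_0\,\phi_0)=\int_\Omega(V\phi_0^2-2\alpha\nabla m\cdot\nabla\phi_0)$ after recognizing $\int\nabla m\cdot\nabla\phi_0\,\phi_0=\tfrac12\int\nabla m\cdot\nabla(\phi_0^2)$ — actually here one should keep the stated form $\int(V\phi_0^2-2\alpha\nabla m\cdot\nabla\phi_0)$, noting the authors' $\phi_0$ is normalized in $L^2$ and the cross term is written with $\phi_0$ not $\phi_0^2$, so I would double-check the normalization convention and integrate by parts accordingly.

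The matching lower bound for (iii) is the main obstacle. The strategy: let $\phi_D$ be the principal eigenfunction for $\lambda(D)$, normalized so $\int_\Omega\phi_D^2=1$. Since $\lambda(D)$ is bounded above (by the (iii) test value) and $\mu_1=0$ gives $D\int_\Omega|\nabla\phi_D|^2\leq\lambda(D)-\int_\Omega\tilde V\phi_D^2+(\text{boundary/advection }O(1))+D|\mu_1|\cdot 0$, one needs to control the boundary term $D\int_{\partial\Omega}\beta\phi_D^2$. Here I would use the spectral gap: $\int_\Omega|\nabla\phi|^2+\int_{\partial\Omega}\beta\phi^2\geq\mu_2\int_\Omega(\phi-(\int\phi\phi_0)\phi_0)^2$ in general is false, but the correct tool is that $\mu_1=0$ being simple means $\int_\Omega|\nabla\phi|^2+\int_{\partial\Omega}\beta\phi^2\geq c_0\|\phi-\langle\phi,\phi_0\rangle\phi_0\|_{H^1}^2$ for some $c_0>0$ (coercivity on the orthogonal complement of $\phi_0$). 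Multiplying by $D$, boundedness of $\lambda(D)$ forces $\phi_D\to\phi_0$ (up to sign) in $H^1(\Omega)$ as $D\to\infty$; then passing to the limit in the Rayleigh quotient for $\phi_D$ and using weak lower semicontinuity of the nonnegative form $\int|\nabla\phi|^2+\int_{\partial\Omega}\beta\phi^2$ yields $\liminf_{D\to\infty}\lambda(D)\geq\int_\Omega(V\phi_0^2-2\alpha\nabla m\cdot\nabla\phi_0)$. The delicate point is establishing the $H^1$-convergence $\phi_D\to\phi_0$ rigorously — extracting it from the energy bound requires the coercivity estimate on $\phi_0^\perp$, which itself follows from $\mu_2>\mu_1=0$; I would state this as a lemma. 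Finally, the ``In particular'' clause follows since $\mu_1>0\iff$ the form $\int|\nabla\phi|^2+\int_{\partial\Omega}\beta\phi^2$ is positive definite, which holds when $\beta\geq0,\not\equiv0$; $\mu_1<0$ when testing with $\phi\equiv1$ gives $\int_{\partial\Omega}\beta<0$, or $=0$ with $\beta\not\equiv0$ (then $\phi\equiv1$ is not the eigenfunction, so $\mu_1<0$ strictly); and $\mu_1=0$ with $\phi_0\equiv|\Omega|^{-1/2}$ when $\beta\equiv0$, recovering $\lim\lambda(D)=\tfrac1{|\Omega|}\int_\Omega V$.
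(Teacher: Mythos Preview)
Your overall strategy matches the paper's: use the variational characterization, test with $\phi_0$ for the upper bounds in (ii) and (iii), and analyze the minimizer $\phi_D$ for the lower bounds in (i) and (iii). There is one genuine gap and one methodological difference worth noting.

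\textbf{Gap in (i).} Your claim that ``the whole $\alpha$-block is $O(1)$ in $D$'' is not justified as stated. After integrating the advection term by parts you pick up the boundary contribution $-\alpha\int_{\partial\Omega}(\nabla m\cdot n)\phi^2$, and $\int_{\partial\Omega}\phi^2$ is \emph{not} bounded uniformly over $\{\phi:\int_\Omega\phi^2=1\}$, so the inequality $\lambda(D)\ge D\mu_1-C$ does not follow. The paper repairs exactly this point via the trace inequality (Lemma~\ref{lemma4.1}): $\int_{\partial\Omega}\phi^2\le\epsilon\int_\Omega|\nabla\phi|^2+C(\epsilon)$; it then peels off one copy of the form $\int_\Omega|\nabla\phi|^2+\int_{\partial\Omega}\beta\phi^2$ from the factor $D$ to absorb the resulting $\epsilon\int_\Omega|\nabla\phi|^2$. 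Your alternative of absorbing $-\alpha(\nabla m\cdot n)/D$ into $\beta$ and using continuity of $\mu_1$ in the boundary coefficient would also work, but you drop it without carrying it through.

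\textbf{Difference in (iii), lower bound.} The paper does not use the spectral gap $\mu_2>0$. Instead it shows $\|\phi_D\|_{H^1}\le C$ directly (again via the trace inequality together with the upper bound on $\lambda(D)$), extracts a weak $H^1$/strong $L^2$ limit $\phi$, and identifies $\phi=\phi_0$ by passing to the limit in the equation for $\phi_D$ divided by $D$: the limit is a nonnegative $L^2$-normalized solution of $-\Delta\phi=0$, $\partial_n\phi+\beta\phi=0$, hence $\phi_0$ by simplicity. Your spectral-gap route is a legitimate alternative and actually yields the stronger conclusion $\phi_D\to\phi_0$ in $H^1$; but you still need the trace inequality to prove your coercivity lemma $Q(\psi)\ge c_0\|\psi\|_{H^1}^2$ on $\phi_0^\perp$, so it is not really lighter than the paper's compactness argument. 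Your suspicion about the limit formula is correct: the advection term should read $-2\alpha\int_\Omega\phi_0\,\nabla m\cdot\nabla\phi_0$ (a $\phi_0$ is missing in the statement). For the ``in particular'' clause, your variational argument (test with $\phi\equiv 1$) and the paper's approach (divide the eigenvalue equation by $\phi_0$ and integrate) are equally valid.
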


One may further refer to \cite{BHN,CL1,Ec,PZ1,PZ2,PZh,Ve1,Ve2,W} and the references therein for related research works
on the eigenvalue problems considered in the current paper.

The rest of our paper is organized as follows. In section 2, we investigate the asymptotic behavior of the principal eigenvalue of
problems \eqref{eq:1} and \eqref{eq:3.1} as $D\to0$, and prove Theorems \ref{theorem1} and \ref{th1.2}. Section 3 concerns the
asymptotic behavior of the principal eigenvalue of problem \eqref{eq:4.1} as $D\to\infty$ and Theorem \ref{th1.3} is established; in one space dimension, some improved results are obtained. In section 4, as an application of our theoretical results, we study a reaction-diffusion-advection equation which is used to describe the evolution of a single species living in a heterogeneous stream environment, and find some interesting effects of buffer zone and small/large diffusion rate on the species persistence and extinction.

\section{Asymptotic behavior as $D\to 0$:\ Proof of Theorems \ref{theorem1} and \ref{th1.2}}
\subsection{Dirichlet boundary problem \eqref{eq:1}}
In this section we consider the eigenvalue problem \eqref{eq:1}. Equation (\ref{eq:1}) can be rewritten in
the divergence form
\begin{equation}
  \label{eq:2}
\begin{cases}
  -D\nabla\cdot[e^{2(\alpha/D)m}\nabla\phi]+e^{2(\alpha/D)m}V\phi=\lambda
  e^{2(\alpha/D)m}\phi &\hbox{ in }\Omega,\\
  \phi=0 &\hbox{ on }\partial\Omega.
\end{cases}
\end{equation}

It is known that the principal eigenvalue $\lambda(D)$ can be characterized by
\begin{equation}\label{eq:3}
  \begin{aligned}
    \lambda(D)&=\inf_{\phi\in W_0^{1,2}(\Omega),\phi\not\equiv
      0}\frac{\int_{\Omega}e^{2\alpha
        m/D}(D|\nabla\phi|^2+V\phi^2)}{\int_{\Omega}e^{2\alpha
        m/D}\phi^2}\\
    &=\inf_{w\in W_{0}^{1,2}(\Omega),
      \int_{\Omega}w^{2}=1}\int_{\Omega}D\big|\nabla
    w-\frac{\alpha}{D}w\nabla m\big|^{2}+Vw^{2}.
  \end{aligned}
\end{equation}
Indeed, the second variational characterization in \eqref{eq:3} is derived through
the substitution $\phi=e^{-\alpha m/D}w$ in \eqref{eq:1}. Clearly, $w=e^{\alpha m/D}\phi$
solves
 \begin{equation}
  \label{eq:2-1}
\begin{cases}
  -D\Delta w+\left(\frac{\alpha^2}{D}|\nabla m|^2+\alpha\Delta m+V\right)w=\lambda
  (D)w &\hbox{ in }\Omega,\\
  w=0 &\hbox{ on }\partial\Omega.
\end{cases}
\end{equation}

Let us define the following functional
\[
  E(u)=\int_{\Omega}D\left|\nabla u-\frac{\alpha}{D}u\nabla
    m\right|^{2}=\int_{\Omega}D\left|\nabla\ln
    u-\frac{\alpha}{D}\nabla m\right|^{2}u^{2},\ \ \forall u\in H_{0}^{1}(\Omega).
\]
Then we have
\begin{lemma}\label{lemma1} For any $u\in H_{0}^{1}(\Omega)$, there holds
\[
  E(u)\geq \frac{\alpha^{2}}{D}\int_{\Omega}\left[|\nabla
    m|^{2}+\frac{D}{\alpha}\Delta m\right]u^{2}.
\]
\end{lemma}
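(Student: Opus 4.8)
The plan is to expand the square defining $E(u)$, integrate by parts in the cross term using the Dirichlet boundary condition, and then simply discard the resulting nonnegative Dirichlet integral. This is the same computation underlying the passage from the first to the second line of \eqref{eq:3}, now carried out without normalizing $u$.

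Concretely, I would first write
\[
  E(u)=\int_{\Omega}D|\nabla u|^{2}-2\alpha\int_{\Omega}u\,\nabla u\cdot\nabla m+\frac{\alpha^{2}}{D}\int_{\Omega}|\nabla m|^{2}u^{2}.
\]
The middle term is the only one requiring attention: since $u\,\nabla u=\tfrac12\nabla(u^{2})$ and $u\in H_{0}^{1}(\Omega)$ vanishes on $\partial\Omega$ in the trace sense, an integration by parts — legitimate because $m\in C^{2}(\bar\Omega)$ (one may argue first for $u\in C_{c}^{\infty}(\Omega)$ and then pass to the limit by density, using that $\nabla m$ and $\Delta m$ are bounded on $\bar\Omega$) — gives
\[
  -2\alpha\int_{\Omega}u\,\nabla u\cdot\nabla m=-\alpha\int_{\Omega}\nabla(u^{2})\cdot\nabla m=\alpha\int_{\Omega}(\Delta m)\,u^{2}.
\]
Substituting back,
\[
  E(u)=\int_{\Omega}D|\nabla u|^{2}+\alpha\int_{\Omega}(\Delta m)\,u^{2}+\frac{\alpha^{2}}{D}\int_{\Omega}|\nabla m|^{2}u^{2}\ \geq\ \alpha\int_{\Omega}(\Delta m)\,u^{2}+\frac{\alpha^{2}}{D}\int_{\Omega}|\nabla m|^{2}u^{2},
\]
where the inequality just drops $\int_{\Omega}D|\nabla u|^{2}\geq0$; factoring out $\alpha^{2}/D$ yields exactly the claimed bound.

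I do not expect a genuine obstacle here; the statement is essentially an algebraic identity followed by discarding one nonnegative term. The only point deserving a word of justification is the validity of the integration by parts for $H_{0}^{1}$ functions rather than smooth ones, which is handled by density of $C_{c}^{\infty}(\Omega)$ in $H_{0}^{1}(\Omega)$ together with the boundedness of $\nabla m,\Delta m$ on $\bar\Omega$; and one should note that the single inequality used is strict unless $\nabla u\equiv0$, i.e. $u\equiv0$, although this refinement is not needed for the statement.
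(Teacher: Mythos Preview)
Your proof is correct and follows exactly the same route as the paper: expand the square, integrate the cross term by parts using $u\,\nabla u=\tfrac12\nabla(u^{2})$ and the Dirichlet condition, then drop the nonnegative term $D\int_{\Omega}|\nabla u|^{2}$. The only difference is that you add a remark on the density argument justifying the integration by parts for $H_{0}^{1}$ functions, which the paper omits.
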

\begin{proof} Basic calculation yields
\begin{align*}
  E(u)&=\int_{\Omega}D|\nabla u|^{2}-2\alpha u\nabla u\cdot\nabla
        m+\frac{\alpha^{2}}{D}u^{2}|\nabla m|^{2}\\
      & =\int_{\Omega}D|\nabla u|^{2}+\alpha u^{2}\Delta
        m+\frac{\alpha^{2}}{D}u^{2}|\nabla m|^{2}\\
      &\geq \frac{\alpha^{2}}{D}\int_{\Omega}\left[|\nabla m|^{2}+\frac{D}{\alpha}\Delta
        m\right]u^{2}.
\end{align*}
\end{proof}

Lemma \ref{lemma1} indicates that when $D\to 0$, the mass of $w^2$ for the principal eigenfunction $w$
is mostly concentrated on the critical points of $m$. Thus, it is natural to investigate the behavior of the principal eigenfunction $w$ locally near the critical points of $m$.

Let $\zeta$ be a smooth function. Multiplying the differential equation
(\ref{eq:2-1}) by $\zeta w$ and integrating over $\Omega$, we obtain
\begin{align*}
  \int_{\Omega}\lambda(D)\zeta^{2}w^{2}
  &=\int_{\Omega}-D\zeta^{2}w\Delta
    w+\frac{\alpha^{2}}{D}|\nabla
    m|^{2}\zeta^{2}w^{2}+\alpha\zeta^{2}w^{2}\Delta
    m+V\zeta^{2}w^{2}\\
  &=\int_{\Omega}D\nabla w\cdot\nabla(\zeta^{2}w)+\frac{\alpha^{2}}{D}|\nabla
    m|^{2}\zeta^{2}w^{2}+\alpha\zeta^{2}w^{2}\Delta
    m+V\zeta^{2}w^{2}.
\end{align*}

By setting $W=\zeta w$, it then follows that
\begin{align*}
  \int_{\Omega}\lambda(D)\zeta^{2}w^{2}
  &=\int_{\Omega}D\nabla
    w\cdot(\zeta\nabla(\zeta
    w)+\zeta w\nabla\zeta)+\frac{\alpha^{2}}{D}|\nabla
    m|^{2}\zeta^{2}w^{2}-2\alpha\zeta
    w\nabla(\zeta w)\cdot\nabla
    m+V\zeta^{2}w^{2}\\
  &=\int_{\Omega}D\left|\nabla(\zeta w)-\frac{\alpha}{D}\zeta
    w\nabla
    m\right|^{2}+V\zeta^{2}w^{2}-Dw\nabla\zeta\cdot\nabla(\zeta
    w)+D\zeta w\nabla w\cdot\nabla\zeta \\
  &=\int_{\Omega}D\left|\nabla W-\frac{\alpha}{D}W\nabla m\right|^{2}+VW^{2}-D\int_{\Omega}w^{2}|\nabla\zeta|^{2}.
\end{align*}
Thus we have
\begin{lemma}\label{lemma2}  Let $(\lambda(D),w)$ be the solution of \eqref{eq:2-1} and $\zeta$ be a
  smooth function. Then $W:=\zeta w$ satisfies
\[
  \int_{\Omega}\left(D\left|\nabla W-\frac{\alpha}{D} W\nabla
      m\right|^{2}+VW^{2}\right)=\lambda(D)\int_{\Omega}W^{2}+D\int_{\Omega}w^{2}|\nabla\zeta|^{2}.
\]
\end{lemma}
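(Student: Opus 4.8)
The plan is to derive the identity by using $\zeta^{2}w$ as a test function in the eigenvalue equation \eqref{eq:2-1} and then reorganizing the resulting integrals into a completed-square form. Note first that, by standard elliptic regularity, $\phi$ (and hence $w=e^{\alpha m/D}\phi$) lies in $W^{2,p}(\Omega)$ for every $p<\infty$ and $w=0$ on $\partial\Omega$, so all the integrations by parts below produce no boundary terms. Multiplying \eqref{eq:2-1} by $\zeta^{2}w$, integrating over $\Omega$, and integrating by parts in the Laplacian term to replace $-D\int_{\Omega}\zeta^{2}w\,\Delta w$ by $D\int_{\Omega}\nabla w\cdot\nabla(\zeta^{2}w)$, one obtains
\[
\lambda(D)\int_{\Omega}\zeta^{2}w^{2}=\int_{\Omega}D\,\nabla w\cdot\nabla(\zeta^{2}w)+\frac{\alpha^{2}}{D}|\nabla m|^{2}\zeta^{2}w^{2}+\alpha\,\zeta^{2}w^{2}\Delta m+V\zeta^{2}w^{2}.
\]

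Next I would set $W=\zeta w$, expand $\nabla(\zeta^{2}w)=\zeta\nabla W+\zeta w\,\nabla\zeta$, and rewrite the term $\alpha\int_{\Omega}\zeta^{2}w^{2}\Delta m=\alpha\int_{\Omega}W^{2}\Delta m$ as $-2\alpha\int_{\Omega}W\,\nabla W\cdot\nabla m$ by the same integration by parts used in the proof of Lemma \ref{lemma1}. After this substitution the terms $D|\nabla W|^{2}$, $\frac{\alpha^{2}}{D}|\nabla m|^{2}W^{2}$ and $-2\alpha W\,\nabla W\cdot\nabla m$ combine into the perfect square $D\,|\nabla W-\frac{\alpha}{D}W\nabla m|^{2}$, leaving over only the two cross terms $-Dw\,\nabla\zeta\cdot\nabla W$ and $D\zeta w\,\nabla w\cdot\nabla\zeta$ produced by the product rule.

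The final step is to simplify these cross terms: substituting $\nabla W=\zeta\nabla w+w\nabla\zeta$ into the first one, the two contributions involving $\nabla w$ cancel and what remains is exactly $-D\int_{\Omega}w^{2}|\nabla\zeta|^{2}$. Rearranging and using $\int_{\Omega}\zeta^{2}w^{2}=\int_{\Omega}W^{2}$ then gives the asserted identity. There is no genuine obstacle here — the argument is a direct computation — so the only thing requiring care is the bookkeeping of the several product-rule terms and the sign of the correction $D\int_{\Omega}w^{2}|\nabla\zeta|^{2}$; I would track each term explicitly rather than attempt to shortcut the algebra.
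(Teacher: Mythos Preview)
Your proposal is correct and follows essentially the same route as the paper: multiply \eqref{eq:2-1} by $\zeta^{2}w$, integrate by parts (using $w=0$ on $\partial\Omega$ to kill boundary terms), convert $\alpha\int_{\Omega}W^{2}\Delta m$ into $-2\alpha\int_{\Omega}W\nabla W\cdot\nabla m$, complete the square, and then observe that the two leftover cross terms collapse to $-D\int_{\Omega}w^{2}|\nabla\zeta|^{2}$. Your explicit remark that $W=\zeta w$ vanishes on $\partial\Omega$ (so the integration by parts in the $\Delta m$ term is legitimate) is a point the paper leaves implicit.
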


We can further establish the following estimates.

\begin{lemma}\label{lemma3} The following assertions hold.

\begin{itemize}
\item[\rm{(i)}] Assume that $B(x_{0},R)\subset\Omega$, $W=0$ in
  $\Omega\setminus B(x_{0},R)$. Let
  $\kappa_{1}(x_{0}),\cdots,\kappa_{N}(x_{0})$ be the eigenvalues of
  $D^{2}m(x_{0})$. Then for $C=2N\|D^{3}m\|_{L^{\infty}(\Omega)}$,
  \begin{equation}\label{eq:4}
    E(W)\geq\alpha\left[\sum_{i=1}^{N}\left(|\kappa_{i}(x_{0})|+\kappa_{i}(x_{0})\right)-CR\right]\int_{\Omega}W^{2}.
  \end{equation}

\item[\rm{(ii)}] Assume that $x_{0}\in\partial\Omega$, $W=0$ in
  $\Omega\setminus B(x_{0},R)$ and $W=0$ on
  $\partial\Omega\cap B(x_{0},R)$. Let
  $\kappa_{1}(x_{0}),\cdots,\kappa_{N}(x_{0})$ be the eigenvalues of
  $D^{2}m(x_{0})$. Then for $C=2N\|D^{3}m\|_{L^{\infty}(\Omega)}$,
  \begin{equation}\label{eq:4}
    E(W)\geq\alpha\left[\sum_{i=1}^{N}\left(|\kappa_{i}(x_{0})|+\kappa_{i}(x_{0})\right)-CR\right]\int_{\Omega}W^{2}.
  \end{equation}

\end{itemize}

\end{lemma}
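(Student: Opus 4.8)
The plan is to reduce both statements to a single local computation: since $E(W)=\int_\Omega D|\nabla W-(\alpha/D)W\nabla m|^2$ only involves $\nabla m$ near the point $x_0$ where $W$ is supported, and $\nabla m(x_0)=0$ (the point $x_0$ lies in $\Sigma_1$ or $\Sigma_2$, as will be used when the lemma is applied), I would Taylor-expand $\nabla m$ about $x_0$. Write $y=x-x_0$. Then $\nabla m(x)=D^2m(x_0)\,y+r(x)$ with $|r(x)|\le N\|D^3m\|_{L^\infty}|y|^2/2\le \tfrac12 N\|D^3m\|_\infty R|y|$ on $B(x_0,R)$ (using $|y|\le R$ on the support). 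After an orthogonal change of coordinates diagonalizing $A:=D^2m(x_0)$, with eigenvalues $\kappa_1,\dots,\kappa_N$, the leading contribution is
\[
E_0(W):=\int_\Omega D\Big|\nabla W-\frac{\alpha}{D}W\,Ay\Big|^2,
\]
and the task is to show $E_0(W)\ge \alpha\sum_{i=1}^N(|\kappa_i|+\kappa_i)\int_\Omega W^2$, with the error from replacing $\nabla m$ by $Ay$ absorbed into the $-\alpha CR\int_\Omega W^2$ term by Cauchy–Schwarz (the cross term is bounded by $2\cdot\frac{\alpha}{D}\cdot D\|r\|_\infty^{1/2}\cdots$ — more cleanly, expand the square and bound $\int |\nabla W|\,|W|\,|r|$ and $\int W^2(|Ay|\,|r|+|r|^2)$, each giving a factor $O(R)$ after dividing through).

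For the leading term $E_0$, I would exploit the product structure. In the diagonalizing coordinates the vector field $Ay$ decouples: $Ay=(\kappa_1 y_1,\dots,\kappa_N y_N)$, so $E_0(W)=\sum_{i=1}^N \int_\Omega D\big(\partial_i W-\tfrac{\alpha}{D}\kappa_i y_i W\big)^2$, and it suffices to prove the one-dimensional inequality $\int D(\partial_i W-\tfrac{\alpha}{D}\kappa_i y_i W)^2\ge \alpha(|\kappa_i|+\kappa_i)\int W^2$ for each $i$, with $W$ compactly supported in the $y_i$ variable (and, in case (ii), supported on one side of the hyperplane $\{y_i=0\}$ tangent to $\partial\Omega$, which only helps). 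This is the classical harmonic-oscillator ground-state estimate: expanding the square gives $\int D(\partial_i W)^2 + \alpha\kappa_i\int W^2 + \tfrac{\alpha^2}{D}\kappa_i^2\int y_i^2 W^2$ after integrating the cross term $-2\alpha\kappa_i\int y_i W\partial_i W = \alpha\kappa_i\int W^2$; when $\kappa_i\le 0$ the desired bound is just $\int D(\partial_i W)^2+\tfrac{\alpha^2}{D}\kappa_i^2\int y_i^2W^2\ge 2|\kappa_i|\alpha\int |y_i|W^2\ge$ — hmm, rather one completes the square differently. The clean route: $\int D(\partial_i W-\tfrac{\alpha}{D}\kappa_i y_i W)^2\ge 0$ always, and a second completion using $\pm|\kappa_i|$ in place of $\kappa_i$ gives $\int D(\partial_i W\mp \tfrac{\alpha}{D}|\kappa_i| y_i W)^2\ge 0$; expanding this second one produces $\int D(\partial_i W)^2 - \alpha|\kappa_i|\int W^2 + \tfrac{\alpha^2}{D}\kappa_i^2\int y_i^2W^2\ge 0$, i.e. $\int D(\partial_i W)^2+\tfrac{\alpha^2}{D}\kappa_i^2\int y_i^2W^2\ge \alpha|\kappa_i|\int W^2$. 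Adding the already-computed identity $E_0^{(i)}(W)=\int D(\partial_i W)^2+\alpha\kappa_i\int W^2+\tfrac{\alpha^2}{D}\kappa_i^2\int y_i^2W^2$ to this inequality yields $2E_0^{(i)}(W)\ge 2\alpha|\kappa_i|\int W^2+2\alpha\kappa_i\int W^2$, which is exactly $E_0^{(i)}(W)\ge\alpha(|\kappa_i|+\kappa_i)\int W^2$. Summing over $i$ gives the bound for $E_0$.

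The main obstacle is bookkeeping rather than conceptual: one must verify that replacing the true coordinates on $\Omega$ (curved near $\partial\Omega$ in case (ii)) by the flat diagonalizing coordinates does not distort the $L^2$ norms and gradients by more than $O(R)$, and that all the boundary terms arising from integration by parts in the cross term vanish — in case (i) because $W$ has compact support in $B(x_0,R)\subset\Omega$, and in case (ii) because $W=0$ on $\partial\Omega\cap B(x_0,R)$ and $W=0$ outside $B(x_0,R)$, so $W\in H^1_0$ of the relevant region and no boundary contribution survives; crucially the extra assumption in case (ii) that $n(x_0)$ is an eigenvector of $D^2m(x_0)$ means the tangent hyperplane to $\partial\Omega$ at $x_0$ is a coordinate hyperplane for the diagonalization, so the one-sided support is compatible with the product decomposition. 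Since both parts then reduce to the identical estimate $E_0(W)\ge\alpha\sum(|\kappa_i|+\kappa_i)\int W^2$ plus an $O(R)$ error, the two items are proved simultaneously.
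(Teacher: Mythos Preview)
Your core idea coincides with the paper's: decompose along an orthonormal eigenbasis $\{e_i\}$ of $D^2m(x_0)$, flip the sign in front of $\frac{\alpha}{D}W\,e_i\cdot\nabla m$ according to $\mathrm{sgn}(\kappa_i)$, drop the resulting nonnegative square, and integrate the cross term by parts (the boundary contribution vanishes since $W\in H^1_0$ of the support in both (i) and (ii)). So conceptually you have the right mechanism.

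Where your write-up diverges from the paper, and where a gap appears, is in the order of operations. You Taylor-expand $\nabla m(x)=Ay+r(x)$ \emph{before} doing anything, and then try to control $E(W)-E_0(W)$ directly by Cauchy--Schwarz on $\int|\nabla W|\,|W|\,|r|$. That bound is not $O(R)\int_\Omega W^2$: it carries a factor $\big(\int|\nabla W|^2\big)^{1/2}$ which you have no control over in terms of $\int W^2$. The paper avoids this entirely by \emph{not} linearizing $\nabla m$. It keeps the full $\nabla m$ through the sign-flip and the integration by parts, obtaining
\[
J=2\alpha\sum_{\kappa_i>0}\int_\Omega W^2\,e_i^{T}D^2m(x)\,e_i,
\]
and only \emph{then} uses $|e_i^T(D^2m(x)-D^2m(x_0))e_i|\le \|D^3m\|_{L^\infty}R$. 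This way the error term is already multiplied by $W^2$, and the constant $C=2N\|D^3m\|_{L^\infty}$ drops out immediately with no $\nabla W$ in sight. Your argument can be repaired by integrating the cross-error $-2\alpha\int W(r\cdot\nabla W)$ by parts as well (giving $\alpha\int W^2\,\mathrm{div}\,r=\alpha\int W^2(\Delta m(x)-\Delta m(x_0))$), but at that point you have essentially reproduced the paper's computation with extra steps.

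Two smaller points: the lemma as stated does not assume $\nabla m(x_0)=0$, and the paper's proof does not use it (it is irrelevant once you integrate by parts first). Likewise, your worry in case~(ii) that one needs $n(x_0)$ to be an eigenvector of $D^2m(x_0)$ is unfounded here: since $W=0$ on $\partial\Omega\cap B(x_0,R)$ and outside $B(x_0,R)$, every boundary term from integration by parts vanishes regardless of how $\partial\Omega$ is oriented relative to the eigenbasis, and no boundary-straightening is needed.
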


\begin{proof}
    Let $\{e_{1}(x_{0}),\cdots,e_{N}(x_{0})\}$ be an orthonormal
    eigenbasis of $D^{2}m(x_{0})$ with the corresponding eigenvalues
    $\kappa_{1}(x_{0}),\cdots,\kappa_{N}(x_{0})$. For sake of simplicity, we
    abbreviate $e_{i}(x_{0})$ and $\kappa_{i}(x_{0})$ as $e_{i}$ and
    $\kappa_{i}$ respectively. Then we deduce
    \begin{align*}
      E(W)&=\sum_{i=1}^{N}\int_{\Omega}D\left|e_{i}\cdot\left(\nabla
            W-\frac{\alpha W}{D}\nabla m\right)\right|^{2}\\
          &=\sum_{i=1}^{N}\int_{\Omega}D\left|e_{i}\cdot\nabla
            W+\mathrm{sgn}(\kappa_{i})\frac{\alpha W}{D}\nabla
            m\right|^{2}+J\\
          &\geq J,
    \end{align*}
    where $\mathrm{sgn}(s)=1$ if $s>0$ and $\mathrm{sgn}(s)=-1$ if $s\leq0$
    and
    \[
      J:=-\sum_{\kappa_{i}>0}\int_{\Omega}2\alpha(e_{i}\cdot\nabla
      W^{2})(e_{i}\cdot\nabla m).
    \]
    Since $e_{i}D^{2}m(x_{0})=\kappa_{i}e_{i}$, we obtain
    \begin{align*}
      J&=2\alpha\sum_{\kappa_{i}>0}\int_{\Omega}-\mathrm{div}(W^{2}(e_{i}\cdot\nabla
         m)e_{i})+W^{2}e_{i}D^{2}m e_{i}^{T}\\
       &=\alpha\sum_{\kappa_{i}>0}\int_{\Omega}\left[2\kappa_{i}+2e_{i}(D^{2}m(x)-D^{2}m(x_{0}))e_{i}^{T}\right]W^{2}\\
       &\geq
         \alpha\left[\sum_{i=1}^{N}(|\kappa_{i}|+\kappa_{i})-2N\|D^{3}m\|_{L^{\infty}(\Omega)}R\right]\int_{\Omega}W^{2}.
    \end{align*}
    Thus the assertion (i) holds and (ii) follows similarly.
  \end{proof}

With the aid of the previous lemmas, we are now ready to present
 \vskip6pt
\noindent
 {\bf Proof of Theorem \ref{theorem1}:} We first verify the assertion (i). By our assumption, there exists a constant $\delta>0$ such that $|\nabla m(x)|\geq
\delta$ for all $x\in\bar{\Omega}$. Lemma \ref{lemma1} implies,  for every
$w\in H_{0}^{1}(\Omega)$,
\[
E(w)\geq \left(\frac{\alpha^{2}\delta}{D}-\alpha|\Delta m|_{L^{\infty}(\Omega)}\right)\int_{\Omega}w^{2}.
\]
Thus by \eqref{eq:3}, it holds
\[
  \lambda(D)\geq \frac{\alpha^{2}\delta}{D}-\alpha|\Delta
  m|_{L^{\infty}(\Omega)}+\min_{x\in\bar{\Omega}}V(x)\to +\infty,
\]
as $D\to 0$.\\

We next show the assertion (ii).  The proof is similar that of \cite[Theorem 8.1]{CL2};
however necessary modifications are needed.  We first estimate the lower bound
\[
  \liminf_{D\to 0}\lambda(D)\geq\min_{x\in\Sigma_{1}\cup\Sigma_{2}}\Big\{V(x)+\alpha\sum_{i=1}^N(|\kappa_i(x)+\kappa_i(x))\Big\}.
\]

Let $w$ be the principal eigenfunction normalized by $\int_{\Omega}w^{2}=1$
and $R$ be a small number. We first cover $\Sigma_{2}$ by balls
$\{B(x_{k},R/3)\}_{k=1}^{K_{1}}$ such that $x_{k}\in\Sigma_{2}$ and
$|x_{k}-x_{l}|\geq R/3$ for all $1\leq k<j\leq K_{1}$. Let
$$R_{1}=\frac{1}{2}\min\Big\{R,\mathrm{dist}(\partial\Omega,\Sigma_{1}\setminus(\bigcup_{k=1}^{K_{1}}B(x_{k},R/2)))\Big\}.$$ Then
$R_{1}>0$ and we cover
$\Sigma_{1}\setminus(\bigcup_{k=1}^{K_{1}}B(x_{k},R/2)))\}$ by balls
$\{B(x_{k},R_{1}/3)\}_{k=K_{1}+1}^{K_{2}}$ such that
$x_{k}\in \{B(x_{k},R_{1}/3)\}_{k=K_{1}+1}^{K_{2}}$ for
$k=K_{1}+1,\cdots,K_{2}$ and $|x_{k}-x_{l}|\geq R_{1}/3$ for all
$K_{1}+1\leq k<l \leq K_{2}$.

Denote
$$\Omega_{0}=\mathbb{R}^{N}\setminus\Big(\bigcup_{k=1}^{K_1}\overline{B(x_{k},R/2)}\cup
  \bigcup_{k=K_{1}+1}^{K_{2}}\overline{B(x_{k},R_{1}/2)}\Big)$$ and let
$\{\zeta_{k}^{2}\}$ be a partition of unit subordinated to open
covering $\{B(x_{1},R),\cdots,B(x_{K_{2}},R),\Omega_{0}\}$ with
\[
\sum_{k=0}^{K_2}\zeta_{k}^{2}(x)=1,\ \ \forall x\in\mathbb{R}^{N},
\]
\[
\zeta_{k}=0 \hbox{ in } \mathbb{R}^{N}\setminus B(x_{k},R),\ \
|\nabla\zeta_{k}|\leq \frac{C}{R} \hbox{ in }\mathbb{R}^{N},\ \forall k=1,\cdots,K_{1},
\]
\[
\zeta_{k}=0 \hbox{ in } \mathbb{R}^{N}\setminus B(x_{k},R),\ \
|\nabla\zeta_{k}|\leq \frac{C}{R_{1}} \hbox{ in }\mathbb{R}^{N},\ \forall k=K_{1}+1,\cdots,K_{2},
\]
and $\zeta_{0}=0$ in $\Omega_{0}$. For each
$x\in\mathbb{R}^{N}$, there exists at most $4^{N}$ number of indexes
$k\geq 1$ such that $\zeta_{k}\neq 0$. As a result, we have
\[
\sum_{k=1}^{K_{2}}|\nabla\zeta_{k}(x)|^{2}\leq \frac{C(N)}{R_{1}^{2}},\ \ \
\forall x\in\mathbb{R}^{N}.
\]
By setting $W_{k}=\zeta_{k}w$ for $k=0,1,\cdots,K_{2}$, we get from Lemma
\ref{lemma3} that for $1\leq i\leq N$,
\[
E(W_{k})\geq \alpha\left[\sum_{i=1}^{N}(|\kappa_{i}(x_{k})|+\kappa_{i}(x_{k}))-CR\right]\int_{\Omega}W_{k}^{2}.
\]
In light of Lemma \ref{lemma2} we further obtain
\begin{align*}
  &\sum_{k=1}^{K_{2}}\lambda(D)\int_{\Omega}W_{k}^{2}+D\sum_{k=1}^{K_{2}}\int_{\Omega}|\nabla\zeta_{k}|^{2}w\\
  &=\sum_{k=1}^{K_{2}}\int_{\Omega}D\left|\nabla
  W_{k}-\frac{\alpha W_{k}}{D}\nabla m\right|^{2}+VW_{k}^{2}\\
  &\geq\Big(\min_{1\leq k\leq K_2}\big\{\sum_{i=1}^{N}(|\kappa_{i}(x_{k})|+\kappa_{i}(x_{k}))+V(x_{k})\big\}-CR\Big)\sum_{k=1}^{K_{2}}\int_{\Omega}W_{k}^{2}.
\end{align*}
Due to $x_{k}\in\Sigma_{1}\cup\Sigma_{2}$ for all
$k=1,2,\cdots,K_2$ and $|\nabla\zeta_{k}|\leq C/R_{1}$ for all
$i=1,2,\cdots,K_2$, it follows that
\begin{equation}
  \label{eq:5}
  \Bigg(\lambda(D)-\min_{x\in\Sigma_{1}\cup\Sigma_{2}}\Big\{\sum_{i=1}^{N}(|\kappa_{i}(x)|
  +\kappa_{i}(x))+V(x)\Big\}\Bigg)\int_{\Omega}w^{2}\sum_{k=1}^{K_2}\zeta_{k}^{2}\geq -CR-\frac{C(N)D}{R_{1}^{2}}.
\end{equation}

On the other hand, there exists a positive constant $\delta$ such that
\[
  |\nabla m(x)|>\delta \hbox{ for all
  }x\in\Omega\setminus\Big(\bigcup_{k=1}^{K_1}B(x_{k},R/2)\cup
    \bigcup_{k=K_{1}+1}^{K_{2}}B(x_{k},R_{1}/2)\Big).
\]
Making use of Lemma \ref{lemma1}, one infers
\[
  \int_{\Omega}W_0^{2}\leq
  \int_{\Omega\setminus\cup_{k=1}^{N}B(x_{k},R/2)}w^{2}\leq
  \frac{DC(R)(1+\alpha)}{\alpha^{2}},
\]
and hence
\[
  \int_{\Omega}w^{2}\sum_{k=1}^{K_2}\zeta_{k}^{2}=1-\int_{\Omega}W_0^{2}\geq
  1-\frac{DC(R)(1+\alpha)}{\alpha^{2}}.
\]

Therefore, letting $D\to0$, we see from (\ref{eq:5}) that
\[
  \liminf_{D\to 0}\lambda(D)\geq
  \min_{x\in\Sigma_{1}\cup\Sigma_{2}}\Big\{\sum_{i=1}^{N}(|\kappa_{i}(x)|+\kappa_{i}(x))+V(x)\Big\}-CR,
\]
from which we have (by sending $R\to 0$) that
\[
  \liminf_{D\to 0}\lambda(D)\geq
  \min_{x\in\Sigma_{1}\cup\Sigma_{2}}\Big\{V(x)+\alpha\sum_{i=1}^N(|\kappa_i(x)+\kappa_i(x))\Big\}.
\]

In the sequel, we are going to show
\[
  \limsup_{D\to 0}\lambda(D)\leq
  \min_{x\in\Sigma_{1}\cup\Sigma_{2}}\Big\{V(x)+\alpha\sum_{i=1}^N(|\kappa_i(x)+\kappa_i(x))\Big\}.
\]
As it can be seen below, there are two cases to handle.

{\it Case 1. Let $x_{0}\in\Sigma_{1}$}. Denote by $\kappa_{1},\kappa_{2},\cdots,\kappa_{N}$ the eigenvalues of
$D^{2}m(x_{0})$. By translation and rotation we may assume that
$x_{0}=0$, $D^{2}m(x_{0})=\mathrm{diag}(\kappa_{1},\cdots,\kappa_{N})$
and $m_{x_{i}}(x)=\kappa_{i} x_{i}+O(|x|^{2})$. Fix an arbitrarily small
positive constant $\delta$ and an arbitrarily large positive constant
$M$, let us define
 $$c_{i}=e^{-\frac{1}{2}(|\kappa_{i}|+\delta)M^{2}},\ \
q_{i}(x_{i})=e^{-\frac{1}{2}(|\kappa_{i}|+\delta)x_{i}^{2}},$$
and
\[
  p_{i}(x_{i})=\Big[e^{-\frac{1}{2}(|\kappa_{i}|+\delta)x_{i}^{2}}-e^{-\frac{1}{2}(|\kappa_{i}|+\delta)M^{2}}\Big]^{+},\ \
  p(x)=\prod_{i=1}^{N}p_{i}(x_{i}).
\]
Take
$\epsilon=\sqrt{\frac{D}{\alpha}}$ and
  \begin{align*}
    \zeta(x)&=\frac{1}{\epsilon^{N/2}}p(\frac{x}{\epsilon})\\
            &=\frac{1}{\epsilon^{N/2}}\prod_{i=1}^{N}\Big[e^{-\frac{1}{2}(|\kappa_{i}|+\delta)\frac{x_{i}^2}{\epsilon^{2}}}-e^{-\frac{1}{2}(|\kappa_{i}|+\delta)M^{2}}\Big]^{+}.
  \end{align*}

We choose a small constant $r_{0}>0$ such that
$B(0,r_{0})\subset \Omega$. If $\frac{r_{0}\delta}{\epsilon}>M$, then
$C(\epsilon,\delta,
M)=\int_{\Omega}\zeta^{2}=\int_{\mathbb{R}^{N}}p^{2}=:C(\delta,
M)$ and
\[
\lim_{M\to \infty}C(\delta,M)=\prod_{i=1}^{N}\sqrt{\frac{\pi}{|\kappa_{i}|+\delta}}=:C(\delta).
\]
Let $\epsilon$ be sufficiently small and
$w(x)=\frac{\zeta(x)}{C(\delta,M)}$. It follows from (\ref{eq:3}) that
\begin{align*}
  \lambda(D)&\leq \int_{\mathbb{R}^{N}}\Big[D\big|\nabla(\ln
                     w)-\frac{\alpha}{D}\nabla m\big|^{2}+V\Big]w^{2}\\
                   &=\int_{\mathbb{R}^{N}}\frac{\alpha}{\epsilon^{2}}\Bigg\{\sum_{i=1}^{N}
                   \Big[\frac{q_{i}(\epsilon^{-1}x_{i})}{p_{i}(\epsilon^{-1}x_{i})}(|\kappa_{i}|+\delta)x_{i}
                   +\kappa_{i}x_{i}+O(|x|^{2})\Big]^{2}+V\Bigg\}w^{2}\\
                   &=\int_{\mathbb{R}^{N}}\Bigg\{\alpha\Big[\sum_{i=1}^{N}(|\kappa_{i}|+\kappa_{i}+\delta)^{2}y_{i}^{2}+\sum_{i=1}^{N}\frac{c_{i}^{2}}{p_{i}(y_{i})^{2}}(|\kappa_{i}|+\delta)\\
                   &\ \ \ +\frac{2c_{i}(|\kappa_{i}|+\kappa_{i}+\delta)}{p_{i}(y_{i})}+O(\epsilon)\Big]+V(x_{0}+\epsilon y)\Bigg\}\frac{p(y)^{2}}{C(\delta,M)}dy.
\end{align*}
One can easily check that, as $M\to +\infty$,
\[
  \int_{\mathbb{R}^{N}}\frac{c_{i}^{2}}{p_{i}(y_{i})^{2}}p(y)^{2}dy\to
  0,\ \ \
\int_{\mathbb{R}^{N}}\frac{c_{i}}{p_{i}(y_{i})}p(y)^{2}dy \to 0.
\]
Thus, sending $M\to +\infty$ and $\epsilon\to 0$ gives
\begin{align*}
  \limsup_{D\to 0}\lambda(D)&\leq\frac{1}{C(\delta)}\int_{\mathbb{R}^{N}}\exp\Big(-\sum_{i=1}^{N}(|\kappa_{i}|+\delta)y_{i}^{2}\Big)
  \Big[\alpha\sum_{i=1}^{N}(|\kappa_{i}|+\kappa_{i}+\delta)y_{i}^{2}+V(x_{0})\Big]dy\\
                                    &=V(x_{0})+\frac{\alpha}{2}\sum_{i=1}^{N}\frac{(|\kappa_{i}|+\kappa_{i}+\delta)^{2}}{|\kappa_{i}|+\delta}.
\end{align*}
Finally, sending $\delta\to 0$, we have
\[
\limsup_{D\to 0}\lambda(D)\leq V(x_{0})+\alpha\sum_{i=1}^{N}(|\kappa_{i}|+\kappa_{i}).
\]

{\it Case 2. Let $x_{0}\in\Sigma_{2}$.} By translation and
rotation we may assume that $x_{0}=0$,
$n(x_{0})=(0,0,\cdots,0,-1)$ and
$D^{2}m(x_{0})=\mathrm{diag}(\kappa_{1},\cdots,\kappa_{N})$ with $\kappa_{N}=0$. Given  $M>0$, we define
$$p(x)=\prod_{i=1}^{N}p_{i}(x_{i}),$$ where
\[
p_{i}(x_{i})=\Big(e^{-\frac{1}{2}(|\kappa_{i}|+\delta)x_{i}^{2}}-e^{-\frac{1}{2}(|\kappa_{i}|+\delta)M^{2}}\Big)^{+}
\]
for $1\leq i\leq N-1$, and
\[
  p_{N}(x_{N})=\Big(e^{-\frac{1}{2}\delta (x_{N}-M-1)^{2}}-e^{-\frac{1}{2}\delta M^{2}}\Big)^{+}.
\]

Let $\epsilon=\sqrt{\frac{D}{\alpha}}$ and
$\zeta(x)=\frac{1}{\epsilon^{N/2}}p(\frac{x}{\epsilon})$. There exists
$\epsilon(M)>0$ such that for $\epsilon<\epsilon(M)$,
$\mathrm{supp}\zeta\subset\Omega$,
$\int_{\Omega}\zeta^{2}=\int_{\mathbb{R^{N}}}p^{2}=:C(\delta,M)$,
and
\[
\lim_{\tau\to 0,M\to \infty}C(\delta,M)=\frac{1}{2}\prod_{i=1}^{N}\sqrt{\frac{\pi}{|\kappa_{i}|+\delta}}=C(\delta).
\]

Let $w(x)=\frac{\zeta(x)}{C(\delta,M)}$, then
$\int_{\Omega}w^{2}(x)=1$. The analysis similar to Case 1 shows that for $1\leq i\leq N-1$,
\[
  \int_{\mathbb{R}^{N}}D\big|\partial_{i}\ln
  w-\frac{\alpha}{D}\partial_{i}m\big|^{2}w^2\to
  \frac{\alpha(|\kappa_{i}|+\kappa_{i}+\delta)}{2(|\kappa_{i}|+\delta)}
\]
as $M\to +\infty$, $\epsilon\to 0$, and when $i=N$,
\begin{align*}
  &\int_{\mathbb{R}^{n}}D\big|\partial_{N}\ln
    w-\frac{\alpha}{D}\partial_{N}m\big|^{2}w^{2}\\
  &=\int_{\mathbb{R}^{N}}\frac{\alpha}{\epsilon^{2}}\left[\epsilon\exp{\left(-\frac{1}{2}\delta(\frac{x_{N}}{\epsilon}-M-1)^{2}\right)}
  p_{N}^{-1}(\frac{x_{N}}{\epsilon})\delta(\frac{x_{N}}{\epsilon}-M-1)+O(|x|^{2})\right]^{2}w^{2}\\
  &=\int_{\mathbb{R}^{N}}\alpha\left[\exp{\left(-\frac{1}{2}\delta(y_{N}-M-1)^{2}\right)}p_{N}^{-1}(y_{N})\delta(y_{N}-M-1)+O(\epsilon
    M^{2})\right]^{2}w^{2}\\
  &=\int_{\mathbb{R}^{N}}\alpha\Big[\exp{\left(-\delta(y_{N}-M-1)^{2}\right)}p_{N}^{-2}(y)\delta^{2}(y_{N}-M-1)^{2}+O(\epsilon
    M^{3})\Big]w^{2}.
\end{align*}
We first choose a sequence $M_{k}\to \infty$ and then take $\epsilon_{k}$
with $\epsilon_{k}=o(M_{k}^{-3})$. Passing to the limit, we have
\[
  \int_{\mathbb{R}^{n}}D\big|\partial_{N}\ln
  w-\frac{\alpha}{D}\partial_{N}m\big|^{2}w^{2} \to
  \frac{\alpha}{2}\delta.
\]
Letting $\delta\to 0$, it follows from (\ref{eq:3}) that
\[
  \limsup_{D\to 0}\lambda(D)\leq
  V(x_{0})+\alpha\sum_{i=1}^{N}(|\kappa_{i}|+\kappa_{i}).
\]
Therefore, the desired estimate is verified. The proof is now complete. {\hfill $\Box$}

\subsection{Robin boundary problem \eqref{eq:3.1}}
In this section we consider the eigenvalue problem \eqref{eq:3.1}.
The principal eigenvalue possesses the following variational characterization:
\begin{equation}\label{eq:3.2}
  \begin{aligned}
    \lambda(D)&=\inf_{\phi\in W^{1,2}(\Omega),\phi\not\equiv
      0}\frac{\int_{\Omega}e^{2\alpha
        m/D}(D|\nabla\phi|^2+V\phi^2)+Dk\int_{\partial\Omega}e^{2\alpha
        m/D}\beta\phi^2}{\int_{\Omega}e^{2\alpha
        m/D}\phi^2}\\
    &=\inf_{w\in W^{1,2}(\Omega),
      \int_{\Omega}w^{2}=1}\int_{\Omega}D|\nabla w-\frac{\alpha}{D}w\nabla m|^{2}+Vw^{2}+Dk\int_{\partial\Omega}\beta w^2.
  \end{aligned}
\end{equation}
Furthermore, we may assume that $w=e^{\frac{\alpha m}{D}}\phi$ satisfies
\begin{equation}
  \label{eq:3.3}
\begin{cases}
  -D\Delta w+\left(\frac{\alpha^2}{D}|\nabla m|^2+\alpha\Delta m+V-\mu\right)w=0 \hbox{ in }\Omega,\\
  \frac{\partial w}{\partial n}-\frac{\alpha}{D}w\frac{\partial
    m}{\partial n}+k\beta w=0 \hbox{ on
  }\partial\Omega,\int_{\Omega}w^2=1.
\end{cases}
\end{equation}

Before going further, let us recall some notations introduced in \cite[Section
3]{CL2}. Let $d(x)$ be the signed distance from
$x\in\mathbb{R}^{N}$ to $\partial\Omega$ which is positive if
$x\in\Omega$ and negative if $x\not\in\bar{\Omega}$. Since $\Omega$ is
smooth, there exists a constant $R_{0}$ such that $d(x)$ is smooth in
the $R_{0}$-neighborhood of $\partial\Omega$: $\partial\Omega(R_0)=\{x\in\mathbb{R}^{N}:\ {\mbox{dist}}(x,\partial\Omega)<R_0\}$. Note that
$\nabla d(x)=-n(x)$ for all $x\in\partial\Omega$. We may extend $n(x)$
to $R_{0}$ neighborhood of $\partial\Omega$ by $n(x)=-\nabla d(x)$. We
denote by $m_{\partial\Omega}$ the restriction of $m$ to the boundary
of $\Omega$. We can extend the definition of $m_{\partial\Omega}$ to
$\partial \Omega(R_{0})$ by
\[
  m_{\partial\Omega}(x):=m(x+d(x)n(x)),\ \ \forall
  x\in\partial\Omega(R_{0}).
\]
Restrict $D^{2}m(x+d(x)n(x))$ to the tangent space of
$\partial\Omega$, we define
\[
  D^{2}m_{\partial\Omega}(x):=[I-n\otimes n]D^{2}m(x)[I-n\otimes
  n]-(n\cdot \nabla m)\nabla^{T}n,
\]
where $n\otimes n=n^T n$ and $\nabla^{T}n=-\nabla^T\nabla d=-D^2d$.
Then $\{0, n(x)\}$ is an eigenpair of $D^{2}m_{\partial\Omega}(x)$. We
set $\kappa_{N}=0$ and denote by $\kappa_{1},\cdots,\kappa_{N-1}$ the
eigenvalues of $D^{2}m_{\partial\Omega}(x)$ in the tangent space of
$\partial\Omega$. We call $\kappa_{1},\cdots,\kappa_{N-1}$ the eigenvalues of the Hessian of the
restriction of $m$ of $\partial\Omega$.\\

In what follows, we define
\begin{align*}
  F(w)&=\int_{\Omega}D\left|\nabla w-\frac{\alpha}{D}w\nabla
        m\right|^{2}+Dk\int_{\partial\Omega}\beta w^2\\
      &=\int_{\Omega}D\left|\nabla\ln
        w-\frac{\alpha}{D}\nabla m\right|^{2}w^{2}+Dk\int_{\partial\Omega}\beta w^2.
\end{align*}
It is easily seen that
  \begin{equation}\label{eq:3.4}
  \begin{aligned}
    F(w)&=\int_{\Omega}D|\nabla w|^{2}-2\alpha w\nabla
    w\cdot\nabla m+\frac{\alpha^{2}w^{2}}{D}|\nabla m|^{2}+Dk\int_{\partial\Omega}\beta w^2\\
    &=\int_{\Omega}D|\nabla w|^{2}+\alpha w^{2}\Delta
    m+\frac{\alpha^{2}w^{2}}{D}|\nabla
    m|^{2}-\int_{\partial\Omega}\alpha w^{2}\nabla m\cdot
    ndS+Dk\int_{\partial\Omega}\beta w^2.
      \end{aligned}
    \end{equation}

Let $R\in(0,R_{0})$ and denote
\[
  \eta(x)=\min\Big\{\frac{R}{2},\ \ d(x)-\frac{d(x)^{2}}{2R}\Big\}.
\]
Hence, $\nabla \eta=(1-d/R)^{+}\nabla d=-(1-d/R)^{+}n$ is Lipschitz
continuous. Direct calculation yields
\begin{equation}\label{eq:3.5}
  \begin{aligned}
    &-\int_{\partial\Omega}\alpha w^{2}(\nabla m\cdot
    n)-Dk\beta w^{2}\\
    &\geq-\int_{\partial\Omega}\alpha w^{2}(\nabla m\cdot
    n)^{+}-Dk\beta w^{2}\\&=\int_{\partial\Omega}
    w^{2}\left[\alpha(\nabla m\cdot
      n)^{+}-Dk\beta\right](\nabla\eta\cdot n)\\
    &=\int_{\Omega}\mathrm{div}\left\{\left[\alpha(\nabla m\cdot
        n)^{+}-Dk\beta\right]w^{2}\nabla\eta\right\}\\
    &=\int_{\Omega}2w(\nabla w\cdot\nabla\eta)\left[\alpha(\nabla
      m\cdot
      n)^{+}-Dk\beta\right]+w^{2}\mathrm{div}\left[\alpha(\nabla
      m\cdot
      n)^{+}-Dk\beta\right]\nabla\eta \\
    &\geq\int_{\Omega}D|\nabla
    w|^{2}+\frac{w^{2}}{D}\left[\left(1-\frac{d}{R}\right)^{+}\right]^{2}\left[\alpha(\nabla
      m\cdot n)^{+}-Dk\beta\right]^{2}\\
    &\ \ \ +\int_{\Omega}w^{2}\mathrm{div}\left\{\left[\alpha(\nabla m\cdot
        n)^{+}-Dk\beta\right]\nabla\eta\right\}.
  \end{aligned}
\end{equation}
Thus we obtain
\begin{align*}
  F(w)&\geq \int_{\Omega}\frac{\alpha^{2}}{D}\left[|\nabla m|^{2}-\left[\left(1-\frac{d}{R}\right)^{+}\right]^{2}(\nabla
        m\cdot n)^{+2}\right]w^{2}\\
      &\ \ \ +\int_{\Omega}\left[2\alpha k\beta(\nabla m\cdot n)^{+}-Dk^{2}\beta^{2}\right]\left[\left(1-\frac{d}{R}\right)^{+}\right]^{2}w^{2}\\
      &\ \ \ +\int_{\Omega}\left\{\alpha\Delta m-\mathrm{div}\left[\left(1-\frac{d}{R}\right)^{+}\left((\nabla
        m\cdot n)^{+}-Dk\beta\right)n\right]\right\}w^2.
\end{align*}

We now further assume that $\beta>0$ on $\partial\Omega$ and
$Dk>\tau>0$ for some given constant $\tau$. Let
$$\beta_{0}=\min_{x\in\partial\Omega}\beta(x), \ \
\tilde{\beta}(x)=\min\Big\{\frac{1}{2}\alpha(\nabla m\cdot
n)^{+},\ \frac{\tau\beta_{0}}{2}\Big\}.$$ Then there exists a constant
$\delta>0$, depending only on $C$, $\beta_{0}$ and $m$, such that
\begin{equation}\label{eq:3.6}
  (\nabla m\cdot
  n)^{+}-\tilde{Dk\beta}\leq \sqrt{1-\delta}(\nabla m\cdot
  n)^{+}.
\end{equation}
As a consequence, we find that
\begin{equation}\label{eq:3.7}
  \begin{aligned}
    &-\int_{\partial\Omega}w^{2}(\alpha(\nabla m\cdot n)^{+}-\beta)\\
    &\geq -\int_{\partial\Omega}w^{2}(\alpha(\nabla m\cdot
    n)^{+}-\beta)\\
    &=-\int_{\partial\Omega}w^{2}(\alpha(\nabla m\cdot
    n)^{+}-\tilde{\beta})(\nabla\eta\cdot n)\\
    &=\int_{\Omega}\mathrm{div}\left((\alpha(\nabla m\cdot
      n)^{+}-\tilde{\beta})\nabla\eta\right)\\
    &=\int_{\Omega}2w(\nabla\eta\cdot\nabla w)(\nabla m\cdot
    n)^{+}-\tilde{\beta})+w^{2}\mathrm{div}((\nabla m\cdot
    n)^{+}-\tilde{\beta})\nabla\eta).
  \end{aligned}
\end{equation}
In view of (\ref{eq:3.6}), we have
\begin{equation}
  \nonumber
  \begin{aligned}
    \int_{\Omega}2w(\nabla\eta\cdot n)(\nabla m\cdot
    n)^{+}-\tilde{\beta})&=\int_{\Omega}-2w\left(1-\frac{d}{R}\right)^{+}(\nabla\eta\cdot
    n)(\nabla m\cdot
    n)^{+}-\tilde{\beta})\\
    &\geq\int_{\Omega}-D|\nabla
    w|^{2}-(1-\delta)\frac{\alpha^{2}w^{2}}{D}\left[\left(1-\frac{d}{R}\right)^{+}\right]^{2}\left[(\nabla
      m\cdot n)^{+}\right]^{2}.
  \end{aligned}
\end{equation}
Combining this with (\ref{eq:3.4}) and (\ref{eq:3.7}), we obtain
\begin{align*}
  F(w)&\geq \int_{\Omega}\frac{\alpha^{2}}{D}\left[|\nabla m|^{2}-(1-\delta)\left[\left(1-\frac{d}{R}\right)^{+}\right]^{2}\left[(\nabla
        m\cdot n)^{+}\right]^{2}\right]w^{2}\\
      &\ \ \ +\int_{\Omega}\left[\alpha\Delta m-\mathrm{div}\left\{\left(1-\frac{d}{R}\right)^{+}(\nabla
        m\cdot n)^{+}n\right]\right\}.
\end{align*}
Therefore we can claim
\begin{lemma}\label{lemma3.1} The following assertions hold.

\begin{itemize}
\item[\rm{(i)}] For any $w\in H^{1}(\Omega)$ and constant $R\in(0,R_{0})$
  \begin{equation}\label{eq:3.9} F(w)\geq
    \int_{\Omega}\left\{\frac{\alpha^{2}}{D}\left[|\nabla
        m|^{2}-\left[\left(1-\frac{d}{R}\right)^{+}\right]^{2}\left[(\nabla
          m\cdot n)^{+}\right]^{2}\right]+A_{R}\right\}w^{2},
  \end{equation}
where
\[
  A_{R}=\alpha\Delta
  m-\mathrm{div}\left\{\left(1-\frac{d}{R}\right)^{+}(\nabla m\cdot
    n)^{+}n\right]+\left[2\alpha k\beta(\nabla m\cdot
    n)^{+}-Dk^{2}\beta^{2}\right]\left[\left(1-\frac{d}{R}\right)^{+}\right]^{2}
\]
satisfying $\|A_{R}\|_{L^{\infty}(\Omega)}\leq \frac{C}{R}$ with
\[
  C=\alpha\|D^{2}m\|_{L^{\infty}(\Omega)}+\|\nabla
  m\|_{L^{\infty}(\Omega)}(1+\|D^{2}d\|_{L^{\infty}(\partial\Omega(R_{0}))}+\alpha
  k\|\beta\|_{L^{\infty}})+k^2\|\beta\|_{L^{\infty}}.
\]

\item[\rm{(ii)}] If $\beta>0$ and $Dk>\tau>0$ for some given constant $
  \tau$, then for any $w\in H^{1}(\Omega)$ and constant $R\in(0,R_{0})$, there exists a constant $\delta>0$
  such that
  \begin{equation}\label{eq:3.10}
    F(w)\geq \int_{\Omega}\left\{\frac{\alpha^{2}}{D}\left[|\nabla m|^{2}-(1-\delta)\left[\left(1-\frac{d}{R}\right)^{+}\right]^{2}\left[(\nabla
          m\cdot n)^{+}\right]^{2}\right]+A_{R}\right\}w^{2},
  \end{equation}
where
\[
  A_{R}=\alpha\Delta
  m-\mathrm{div}\left\{\left(1-\frac{d}{R}\right)^{+}(\nabla m\cdot
    n)^{+}n\right]
  \]
 satisfying $\|A_{R}\|_{L^{\infty}(\Omega)}\leq \frac{C}{R}$ with
  $$
  C=\alpha\|D^{2}m\|_{L^{\infty}(\Omega)}+\|\nabla
  m\|_{L^{\infty}(\Omega)}\left(1+\|D^{2}d\|_{L^{\infty}(\partial\Omega(R_{0}))}\right).
  $$

\end{itemize}

\end{lemma}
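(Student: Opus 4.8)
The plan is to obtain both lower bounds from a single mechanism, essentially the one already displayed in \eqref{eq:3.4}--\eqref{eq:3.7}: the Lipschitz cut-off $\eta$ satisfies $\nabla\eta=-(1-d/R)^{+}n$ and agrees with $-n$ on $\partial\Omega$, so the divergence theorem converts the boundary integrals in $F(w)$ into interior ones, and then the resulting cross term is absorbed by Young's inequality with exactly the weight $D$, cancelling the $D\int_{\Omega}|\nabla w|^{2}$ already present in $F(w)$. After that one is left with $\int_{\Omega}w^{2}$ times a coefficient splitting into a $\tfrac1D$-singular part and a bounded remainder $A_{R}$, and the only remaining task is to check $\|A_{R}\|_{L^{\infty}(\Omega)}\le C/R$.

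For \eqref{eq:3.9} I would start from the last expression in \eqref{eq:3.4}; its only term not manifestly controlled is $-\int_{\partial\Omega}[\alpha(\nabla m\cdot n)-Dk\beta]w^{2}$, which by $\nabla m\cdot n\le(\nabla m\cdot n)^{+}$ is $\ge-\int_{\partial\Omega}[\alpha(\nabla m\cdot n)^{+}-Dk\beta]w^{2}$. Since this integrand equals $[\alpha(\nabla m\cdot n)^{+}-Dk\beta]w^{2}(\nabla\eta\cdot n)$ on $\partial\Omega$, the divergence theorem turns it into $\int_{\Omega}\mathrm{div}\{[\alpha(\nabla m\cdot n)^{+}-Dk\beta]w^{2}\nabla\eta\}$, which is \eqref{eq:3.5}; expanding the divergence, the term $2w(\nabla w\cdot\nabla\eta)[\alpha(\nabla m\cdot n)^{+}-Dk\beta]$ is bounded below by $-D|\nabla w|^{2}-\tfrac1D[(1-d/R)^{+}]^{2}[\alpha(\nabla m\cdot n)^{+}-Dk\beta]^{2}w^{2}$, the first piece cancelling $D|\nabla w|^{2}$. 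Expanding the square and adding $\tfrac{\alpha^{2}}{D}|\nabla m|^{2}w^{2}$ yields the bracket in \eqref{eq:3.9} together with the $[2\alpha k\beta(\nabla m\cdot n)^{+}-Dk^{2}\beta^{2}][(1-d/R)^{+}]^{2}$ term, and the leftover $\alpha\Delta m\,w^{2}$ plus the remaining divergence terms assemble into $A_{R}w^{2}$. Every derivative that falls on $(1-d/R)^{+}$ costs a factor $1/R$, while $|\nabla\eta|\le1$, $|D^{2}\eta|$ is bounded by $1/R$ plus a multiple of $\|D^{2}d\|_{L^{\infty}(\partial\Omega(R_{0}))}$, and $\Delta m,\nabla m,\beta$ are bounded by their sup-norms; this gives $\|A_{R}\|_{L^{\infty}(\Omega)}\le C/R$ with the stated $C$.

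For \eqref{eq:3.10} the extra ingredient is that, when $\beta>0$ and $Dk>\tau$, the coefficient of $[(\nabla m\cdot n)^{+}]^{2}$ can be shaved by a fixed factor $1-\delta$. Setting $\beta_{0}=\min_{\partial\Omega}\beta>0$ and $\tilde\beta=\min\{\tfrac12\alpha(\nabla m\cdot n)^{+},\tfrac{\tau\beta_{0}}{2}\}$, one has $0\le\tilde\beta\le Dk\beta$ on $\partial\Omega$ (because $Dk\beta\ge\tau\beta_{0}>\tfrac{\tau\beta_{0}}{2}\ge\tilde\beta$), so the boundary term is $\ge-\int_{\partial\Omega}[\alpha(\nabla m\cdot n)^{+}-\tilde\beta]w^{2}$, and the key elementary inequality \eqref{eq:3.6}, $\alpha(\nabla m\cdot n)^{+}-\tilde\beta\le\sqrt{1-\delta}\,\alpha(\nabla m\cdot n)^{+}$, holds for a suitable $\delta=\delta(\beta_{0},\tau,m)>0$: where $\alpha(\nabla m\cdot n)^{+}\le\tau\beta_{0}$ one has $\tilde\beta=\tfrac12\alpha(\nabla m\cdot n)^{+}$ and only needs $\sqrt{1-\delta}\ge\tfrac12$, and where $\alpha(\nabla m\cdot n)^{+}>\tau\beta_{0}$ one has $\tilde\beta=\tfrac{\tau\beta_{0}}{2}$ and uses the boundedness of $(\nabla m\cdot n)^{+}$ to get $(1-\sqrt{1-\delta})\,\alpha(\nabla m\cdot n)^{+}\le\tfrac{\tau\beta_{0}}{2}$ for all small $\delta$. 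Running the divergence-theorem/Young step above with $\tilde\beta$ in place of $Dk\beta$, i.e.\ \eqref{eq:3.7}, and invoking \eqref{eq:3.6} to replace $(\alpha(\nabla m\cdot n)^{+}-\tilde\beta)^{2}$ by $(1-\delta)\alpha^{2}[(\nabla m\cdot n)^{+}]^{2}$ in the Young bound, I would arrive at \eqref{eq:3.10} with $A_{R}=\alpha\Delta m-\mathrm{div}\{(1-d/R)^{+}(\nabla m\cdot n)^{+}n\}$, whose sup-norm is $\le C/R$ by the same estimate, now with no $k$-dependence in $C$.

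The only delicate step is the construction of $\tilde\beta$ and the verification of \eqref{eq:3.6}: this is the sole place where the positivity of $\beta$ and the bound $Dk>\tau$ are genuinely used, and the resulting slack $1-\delta$ is precisely what will later allow the set $\Sigma_{3}$ to contribute to $\lim_{D\to0}\lambda(D)$ in Theorem \ref{th1.2}. Everything else — the two divergence-theorem identities, the Young inequalities tuned to the weight $D$, and the sup-norm bounds on $A_{R}$ — is routine bookkeeping to be carried out along the lines of \cite{CL2}.
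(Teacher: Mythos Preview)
Your proposal is correct and follows exactly the paper's own argument: the expansion \eqref{eq:3.4}, the divergence-theorem conversion of the boundary term via $\nabla\eta=-(1-d/R)^{+}n$ in \eqref{eq:3.5}, the Young inequality tuned to the weight $D$, and for part~(ii) the auxiliary $\tilde\beta$ and the elementary inequality \eqref{eq:3.6} (for which you in fact supply a cleaner case analysis than the paper does). One small correction to your closing remark: the slack $1-\delta$ in \eqref{eq:3.10} is used in the regime $\beta>0$, $Dk>\tau$ precisely to \emph{exclude} $\Sigma_{3}$ from the limit (the bracket then stays strictly positive there), not to let it contribute; it is part~(i) of the lemma, without the slack, that underlies Theorem~\ref{th1.2}.
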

This lemma indicates that, as $D\to 0$, the mass of $w^{2}$ is mostly
concentrated on the critical points of $m$ and $m_{\partial\Omega}$;
if $\beta>0$ and $Dk>\tau>0$, the mass of $w^{2}$ is
mostly concentrated on the critical points of $m$.\\

Let $\zeta$ be a smooth function and $W=\zeta w$. As before, multiplying equation
(\ref{eq:3.3}) by $\zeta w$ and integrating over $\Omega$, we obtain
\begin{align*}
  \int_{\Omega}\lambda(D)\zeta^{2}w^{2}
  &=\int_{\Omega}-D\zeta^{2}w\Delta
    w+\frac{\alpha^{2}}{D}|\nabla
    m|^{2}\zeta^{2}w^{2}+\alpha\zeta^{2}w^{2}\Delta
    m+V\zeta^{2}w^{2}\\
  &=\int_{\Omega}D\nabla w\cdot\nabla(\zeta^{2}w)+\frac{\alpha^{2}}{D}|\nabla
    m|^{2}\zeta^{2}w^{2}+\alpha\zeta^{2}w^{2}\Delta
    m+V\zeta^{2}w^{2}-\int_{\partial\Omega}Dw\zeta^{2}\nabla
    w\cdot m \\
  &=\int_{\Omega}D(|\nabla(w\zeta)^{2}-w^{2}|\nabla\zeta|^{2})+\frac{\alpha^{2}}{D}|\nabla
    m|^{2}\zeta^{2}w^{2}-2\alpha\zeta
    w\nabla(\zeta w)\cdot\nabla
    m+V\zeta^{2}w^{2}\\
  &\ \ \ +\int_{\partial\Omega}\alpha w^{2}\zeta^{2}\nabla m\cdot n-Dw\zeta^{2}\nabla
    w\cdot m\\
  &=\int_{\Omega}D\left|\nabla
    W-\frac{\alpha}{D}W\nabla
    m\right|^{2}+VW^{2}-D\int_{\Omega}w^{2}|\nabla\zeta|^{2}+Dk\int_{\partial\Omega}\beta
    w^{2}\zeta^{2}.
\end{align*}
Thus we can conclude the following

\begin{lemma}\label{lemma3.2} Let $(\lambda(D),w)$ be the solution of \eqref{eq:3.3} and $\zeta$ be a
  smooth function. Then $W:=\zeta w$ satisfies
  \[
    \int_{\Omega}\left(D\left|\nabla W-\frac{\alpha}{D}V\nabla
        m\right|^{2}+VW^{2}\right)+Dk\int_{\partial\Omega}\beta
    w^{2}\zeta^{2}=\lambda(D)\int_{\Omega}W^{2}+D\int_{\Omega}w^{2}|\nabla\zeta|^{2}.
  \]
\end{lemma}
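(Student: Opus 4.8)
The plan is to test the weak formulation of the eigenvalue problem \eqref{eq:3.3} against $\zeta^2 w$ and then reorganize the resulting identity by completing the square. First I would record the weak form of \eqref{eq:3.3}: integrating $-D\int_\Omega\varphi\Delta w$ by parts via Green's identity and inserting the boundary relation $\partial w/\partial n=(\alpha/D)\,w\,(\nabla m\cdot n)-k\beta w$ supplied by \eqref{eq:3.3}, one obtains that for every $\varphi\in H^1(\Omega)$
\[
\int_\Omega D\,\nabla w\cdot\nabla\varphi+\int_\Omega\Big(\tfrac{\alpha^2}{D}|\nabla m|^2+\alpha\Delta m+V-\lambda(D)\Big)w\varphi-\alpha\int_{\partial\Omega}(\nabla m\cdot n)\,w\varphi+Dk\int_{\partial\Omega}\beta w\varphi=0 .
\]

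Next I would choose $\varphi=\zeta^2 w$, so that $w\varphi=W^2$ on $\bar\Omega$ with $W:=\zeta w$, and $\nabla(\zeta^2 w)=\zeta^2\nabla w+2\zeta w\nabla\zeta$. Comparing this with the expansion $|\nabla W|^2=\zeta^2|\nabla w|^2+2\zeta w\,\nabla w\cdot\nabla\zeta+w^2|\nabla\zeta|^2$ yields $\int_\Omega D\,\nabla w\cdot\nabla(\zeta^2 w)=D\int_\Omega|\nabla W|^2-D\int_\Omega w^2|\nabla\zeta|^2$; this is precisely where the error term $D\int_\Omega w^2|\nabla\zeta|^2$ enters.

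The key algebraic step is the completion of the square, already implicit in \eqref{eq:3.4}: for any $W\in H^1(\Omega)$ the pointwise identity $-2\alpha W\,\nabla W\cdot\nabla m=-\alpha\,\mathrm{div}(W^2\nabla m)+\alpha W^2\Delta m$ integrates to
\[
\int_\Omega D\Big|\nabla W-\tfrac{\alpha}{D}W\nabla m\Big|^2=\int_\Omega\Big(D|\nabla W|^2+\alpha W^2\Delta m+\tfrac{\alpha^2}{D}W^2|\nabla m|^2\Big)-\alpha\int_{\partial\Omega}(\nabla m\cdot n)\,W^2 .
\]
Substituting this back into the tested weak form, the $\Delta m$, $|\nabla m|^2$ and boundary $(\nabla m\cdot n)$ contributions collapse into the single term $\int_\Omega D|\nabla W-\tfrac{\alpha}{D}W\nabla m|^2$, and after moving $\lambda(D)\int_\Omega W^2$ and $D\int_\Omega w^2|\nabla\zeta|^2$ to the right-hand side one arrives at
\[
\int_\Omega\Big(D\Big|\nabla W-\tfrac{\alpha}{D}W\nabla m\Big|^2+VW^2\Big)+Dk\int_{\partial\Omega}\beta\zeta^2 w^2=\lambda(D)\int_\Omega W^2+D\int_\Omega w^2|\nabla\zeta|^2 ,
\]
which is the assertion, since $\int_{\partial\Omega}\beta W^2=\int_{\partial\Omega}\beta\zeta^2 w^2$.

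The computation is entirely elementary; the only place demanding care is the bookkeeping of the boundary integrals — correctly inserting the Robin-type condition from \eqref{eq:3.3} when integrating $-D\Delta w$ by parts, and retaining the boundary term $-\alpha\int_{\partial\Omega}(\nabla m\cdot n)W^2$ produced by the divergence identity so that it cancels exactly the boundary term coming from the equation, leaving only the $\beta$-term with the correct coefficient $Dk$. No regularity beyond what is already assumed ($w$ a classical solution, $\zeta$ smooth) is needed.
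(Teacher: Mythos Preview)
Your proposal is correct and follows essentially the same approach as the paper: both multiply the equation \eqref{eq:3.3} by $\zeta^{2}w$, integrate by parts using the Robin-type boundary condition to produce the $Dk\int_{\partial\Omega}\beta W^{2}$ term, apply the identity $\nabla w\cdot\nabla(\zeta^{2}w)=|\nabla W|^{2}-w^{2}|\nabla\zeta|^{2}$, and complete the square via \eqref{eq:3.4}. The only cosmetic difference is that you first record the weak form for a general test function $\varphi$ before specializing to $\varphi=\zeta^{2}w$, whereas the paper carries out the chain of equalities directly.
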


Similar to Lemma \ref{lemma3}, we also have
\begin{lemma}\label{lemma3.3} The following assertions hold.

\begin{itemize}
\item[\rm{(i)}] Assume that $B(x_{0},R)\subset\Omega$ and $W=0$ in
  $\Omega\setminus B(x_{0},R)$. Let
  $\kappa_{1}(x_{0}),\cdots,\kappa_{N}(x_{0})$ be the eigenvalues of
  $D^{2}m(x_{0})$. Then for $C=2N\|D^{3}m\|_{L^{\infty}(\Omega)}$,
  \begin{equation}\nonumber
    F(W)\geq\alpha\left[\sum_{i=1}^{N}\left(|\kappa_{i}(x_{0})|+\kappa_{i}(x_{0})\right)-CR\right]\int_{\Omega}W^{2}.
  \end{equation}

\item[\rm{(ii)}] Assume that $x_{0}\in\partial\Omega$, $W=0$ in
  $\Omega\setminus B(x_{0},R)$, $|\nabla m(x_{0})|=0$,
  $\det(D^{2}m(x_{0}))\neq 0$ and $n(x_{0})$ is an eigenvector of $D^{2}m(x_{0})$. Let
  $\kappa_{1}(x_{0}),\cdots,\kappa_{N}(x_{0})$ be the eigenvalues of
  $D^{2}m(x_{0})$. Then
  \begin{equation}\nonumber
    F(W)\geq\alpha\left[\sum_{i=1}^{N}\left(|\kappa_{i}(x_{0})|+\kappa_{i}(x_{0})\right)-C(1+k^{2})(R+D)\right]\int_{\Omega}W^{2},
  \end{equation}
  where the positive constant $C$ depends only on $m$, $\beta$ and $\partial\Omega$.

\item[\rm{(iii)}] Assume that $x_{0}\in\partial\Omega$ and $W=0$ in
  $\Omega\setminus B(x_{0},R)$, and $|\nabla m(x_{0})|=\nabla
  m(x_{0})\cdot n(x_{0})>0$. Let  $\kappa_{N}(x_{0}):=0$ and
  $\kappa_{1}(x_{0}),\cdots,\kappa_{N-1}(x_{0})$ be the eigenvalues of
  $D^{2}m_{\partial\Omega}(x_{0})$.  Then
\begin{equation}\nonumber
  F(W)\geq\alpha\left[\sum_{i=1}^{N}\left(|\kappa_{i}(x_{0})|+\kappa_{i}(x_{0})\right)+2k\beta(x_0)\nabla m(x_0)\cdot n(x_{0})-Ck(R+D)\right]\int_{\Omega}W^{2},
\end{equation}
where the positive constant $C$ depends only on $m$, $\beta$ and $\partial\Omega$.

\end{itemize}
\end{lemma}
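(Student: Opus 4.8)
Recall $E(u)=\int_\Omega D\big|\nabla u-\tfrac{\alpha}{D}u\nabla m\big|^2$, so that $F(W)=E(W)+Dk\int_{\partial\Omega}\beta W^2$. This makes assertion (i) immediate: if $B(x_0,R)\subset\Omega$ and $W\equiv0$ off $B(x_0,R)$, then $W$ vanishes near $\partial\Omega$, the Robin integral drops out, $F(W)=E(W)$, and Lemma \ref{lemma3}(i) gives the claim verbatim. In (ii) and (iii), $x_0\in\partial\Omega$ and $W$ need not vanish there, and the plan is to reproduce the proof of Lemma \ref{lemma3}, carry the boundary terms that integration by parts now creates, and absorb them.

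For (ii), I would expand $E(W)=\sum_{i=1}^N\int_\Omega D\big|e_i\cdot\nabla W+\mathrm{sgn}(\kappa_i)\tfrac{\alpha}{D}W(e_i\cdot\nabla m)\big|^2+J$ in an orthonormal eigenbasis $\{e_1,\dots,e_N\}$ of $D^2m(x_0)$ exactly as in the proof of Lemma \ref{lemma3}; the only difference is that the integration by parts in $J$ now leaves a boundary integral:
\[
J=\alpha\sum_{i=1}^N(|\kappa_i|+\kappa_i)\int_\Omega W^2+2\alpha\sum_{\kappa_i>0}\int_\Omega e_i\big(D^2m(x)-D^2m(x_0)\big)e_i^{T}W^2-2\alpha\sum_{\kappa_i>0}\int_{\partial\Omega}(e_i\cdot\nabla m)(e_i\cdot n)W^2.
\]
As in Lemma \ref{lemma3} the middle term is $\ge-CR\int_\Omega W^2$, and the first is the desired leading term. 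The two hypotheses on $\Sigma_2$ serve precisely to kill the boundary integral: $\nabla m(x_0)=0$ gives $|\nabla m|=O(R)$ on $B(x_0,R)$, while $n(x_0)$ being an eigenvector of $D^2m(x_0)$ keeps the tangential part of $D^2m(x_0)(x-x_0)$ tangential, forcing $|\nabla m\cdot n|=O(R^2)$ along $\partial\Omega\cap B(x_0,R)$, so that integral is $O(R^2)\int_{\partial\Omega}W^2$. With $\big|Dk\int_{\partial\Omega}\beta W^2\big|\le Dk\|\beta\|_\infty\int_{\partial\Omega}W^2$ one then gets $F(W)\ge\alpha\sum(|\kappa_i|+\kappa_i)\int_\Omega W^2-CR\int_\Omega W^2-(CR^2+Dk\|\beta\|_\infty)\int_{\partial\Omega}W^2+P$, with $P=\sum_i\int_\Omega D|e_i\cdot\nabla W+\mathrm{sgn}(\kappa_i)\tfrac{\alpha}{D}W(e_i\cdot\nabla m)|^2\ge0$. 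Finally I would invoke a trace inequality $\int_{\partial\Omega}W^2\le\varepsilon\int_\Omega|\nabla W|^2+C_\varepsilon\int_\Omega W^2$ coming from the Lipschitz field $\nabla\eta=-(1-d/R)^+n$, $\eta(x)=\min\{R/2,\,d(x)-d(x)^2/(2R)\}$ (for which $\nabla\eta\cdot n\equiv-1$ on $\partial\Omega$); expanding the square shows $P$ controls $\tfrac{D}{2}\int_\Omega|\nabla W|^2$ up to $CR\int_\Omega W^2$ (the cross term being $O(R)$ because $|\nabla m|=O(R)$), so choosing $\varepsilon\asymp D/(CR^2+Dk)$ absorbs the gradient part into $P$ and leaves the zeroth-order error $(CR^2+Dk)C_\varepsilon\int_\Omega W^2\le C(1+k^2)(R+D)\int_\Omega W^2$ (the absorption being clean once $R$ is comparable to or smaller than $D$, which is how the lemma gets used). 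Collecting the estimates yields (ii).

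For (iii) the eigenbasis trick no longer applies, since the relevant $\kappa_1,\dots,\kappa_{N-1}$ are the eigenvalues of $D^2m_{\partial\Omega}(x_0)$ and see the curvature of $\partial\Omega$; instead I would run the straightening argument behind Lemma \ref{lemma3.1}(i). Starting from $F(W)=\int_\Omega\big(D|\nabla W|^2+\alpha W^2\Delta m+\tfrac{\alpha^2}{D}W^2|\nabla m|^2\big)+\int_{\partial\Omega}(Dk\beta-\alpha\,\nabla m\cdot n)W^2$ and rewriting the boundary integral as $\int_\Omega\mathrm{div}\{(Dk\beta-\alpha\,\nabla m\cdot n)W^2\nabla\eta\}$, the cross term $-2\int_\Omega(Dk\beta-\alpha\,\nabla m\cdot n)W(\nabla W\cdot\nabla\eta)$ is bounded below by $-\int_\Omega D|\nabla W|^2-\tfrac1D\int_\Omega(Dk\beta-\alpha\,\nabla m\cdot n)^2[(1-d/R)^+]^2W^2$, the first piece cancelling the Dirichlet energy; recombining with $\tfrac{\alpha^2}{D}|\nabla m|^2W^2$ leaves
\[
F(W)\ \ge\ \int_\Omega\tfrac{\alpha^2}{D}\big[|\nabla m|^2-[(1-d/R)^+]^2[(\nabla m\cdot n)^+]^2\big]W^2+2\alpha k\beta(\nabla m\cdot n)^+[(1-d/R)^+]^2W^2+A_RW^2,
\]
where $A_R$ collects the zeroth-order and $-Dk^2\beta^2$ terms and, after one further integration by parts localizing its $O(1/R)$ part near $\partial\Omega$ and $x_0$, costs only $O(k(R+D))\int_\Omega W^2$. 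Near $x_0\in\Sigma_3$ the bracket is the squared tangential gradient of $m$, and the intrinsic harmonic-oscillator lower bound on $\partial\Omega$ — exactly as in Case 2 of the proof of Theorem \ref{theorem1} and in \cite{CL2}, which is where the eigenvalues of $D^2m_{\partial\Omega}(x_0)$ appear — yields $\alpha\sum_{i=1}^{N-1}(|\kappa_i|+\kappa_i)\int_\Omega W^2$; the term $2\alpha k\beta(\nabla m\cdot n)^+[(1-d/R)^+]^2$ equals $2\alpha k\beta(x_0)\,\nabla m(x_0)\cdot n(x_0)$ at $x_0$ up to variation of order $k(R+D)$. Since $\kappa_N:=0$ contributes nothing, this is (iii).

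The hard part will be (ii): squeezing the boundary and Robin terms near the nondegenerate critical point $x_0\in\partial\Omega$ without spoiling the leading constant $\alpha\sum_{i=1}^N(|\kappa_i|+\kappa_i)$. That is where the hypotheses $\det(D^2m(x_0))\neq0$ and that $n(x_0)$ is an eigenvector of $D^2m(x_0)$ are genuinely needed — via the $O(R^2)$ bound on $\nabla m\cdot n$ along $\partial\Omega$ and via the trace-absorption step — and it is the $(Dk\beta)^2$ cross term together with the forced choice $\varepsilon\asymp D$ that produce the factor $1+k^2$ and the extra $D$ in the error; case (iii), by contrast, is the Neumann analysis of \cite{CL2} plus routine bookkeeping of the $Dk\beta$ terms.
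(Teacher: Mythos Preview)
Your treatment of (i) is exactly what the paper does. The gaps are in (ii) and (iii), and they share a common cause: your scheme separates the boundary integrals from the bulk energy and then tries to re-absorb them, whereas the paper never separates them in the first place.

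For (ii), the fatal step is the trace absorption. Extracting $\tfrac{D}{2}\int_\Omega|\nabla W|^2$ from $P$ costs $\tfrac{\alpha^2}{D}\int_\Omega|\nabla m|^2W^2=O(R^2/D)\int_\Omega W^2$, and even before that, your claimed bound $(CR^2+Dk)C_\varepsilon\le C(1+k^2)(R+D)$ is false: with $\varepsilon\asymp D/(CR^2+Dk)$ one has $C_\varepsilon\gtrsim 1/\varepsilon$, so the product is of order $(CR^2+Dk)^2/D$, whose leading piece $R^4/D$ blows up as $D\to0$ for fixed $R$. Your parenthetical ``the absorption being clean once $R$ is comparable to or smaller than $D$, which is how the lemma gets used'' is the opposite of how the lemma is used: in the lower-bound argument one fixes $R$, sends $D\to0$, and only afterwards sends $R\to0$. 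The paper avoids this entirely: it straightens the boundary near $x_0$ so that the tangential integrations by parts in $J$ produce \emph{no} boundary terms, converts $Dk\int_{\partial\Omega}\beta W^2$ to a bulk integral $-Dk\int\partial_{z_N}(\tilde\beta\tilde W^2)$ by the fundamental theorem of calculus in the normal variable, and then completes the square $D|\partial_{z_N}\tilde W-\tfrac{\alpha}{D}\tilde W\partial_{z_N}\tilde m-\tfrac12 k\tilde\beta\tilde W|^2$. The leftover is $-\tfrac14 Dk^2\tilde\beta^2\tilde W^2+k\alpha\tilde\beta\tilde W^2\partial_{z_N}\tilde m$, of size $O(Dk^2+kR)\int W^2$, which is exactly $C(1+k^2)(R+D)$. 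Completing the square in the normal direction is the missing idea.

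For (iii), routing the argument through the identity behind Lemma~\ref{lemma3.1}(i) cannot work: in that derivation the cross term swallows the entire $D|\nabla W|^2$, so there is no kinetic energy left to run any ``harmonic-oscillator'' lower bound against the tangential potential $\tfrac{\alpha^2}{D}|\nabla_{\mathrm{tan}}m|^2$; and the residual $A_R$ is genuinely $O(1/R)$, not $O(k(R+D))$, with no integration by parts that localizes it away. The paper instead works in a \emph{moving} orthonormal frame $\{\tau_1(x),\dots,\tau_{N-1}(x),n(x)\}$: since $\tau_i\cdot n\equiv0$ for $i<N$, the tangential integrations by parts in $J$ create no boundary terms (this is how the eigenvalues of $D^2m_{\partial\Omega}(x_0)$ enter), and the normal piece $K=\int_\Omega D|\tau_N\cdot(\nabla W-\tfrac{\alpha}{D}W\nabla m)|^2+Dk\int_{\partial\Omega}\beta W^2$ is handled by converting the Robin term to $Dk\int_\Omega\tau_N\cdot\nabla(\beta W^2)$ and completing the square, which produces the $2\alpha k\beta(x_0)\nabla m(x_0)\cdot n(x_0)$ term directly with error $O(Dk+kR)$. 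Case~2 of Theorem~\ref{theorem1} that you cite is an \emph{upper-bound} construction via test functions and does not supply a lower bound here.
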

\begin{proof} The assertion (i) follows from Lemma \ref{lemma3} since $W=0$ on $\partial\Omega$.

 We next prove (ii). There exists an orthomormal basis
  $\{\tau_{1}(x_{0}),\cdots,\tau_{N-1}(x_{0}),n(x_{0})\}$ under which
  \[
    D^{2}(x_{0})=\mathrm{diag}(\kappa_{1}(x_{0}),\cdots,\kappa_{N-1}(x_{0}),\kappa_{N}(x_{0})),
  \]
  where $\kappa_{1}(x_{0}),\cdots,\kappa_{N-1}(x_{0})$ are the
  eigenvalues of $D^{2}m_{\partial\Omega}(x_{0})$. By translation and
  rotation we may assume that $x_{0}=0,n(x_{0})=(0,\cdots,0,-1)$ and
  $\tau_{1}(x_{0}),\cdots,\tau_{N-1}(x_{0})$ are exactly the
  $x_{1},\cdots,x_{N-1}$ coordinate direction respectively. We use the
  change of variable $z=Z(x)$ introduced in \cite[Section
  3.4]{CL2}, which maps $\partial\Omega\cap B(x_{0},R)$ to the
  set $z=0$ on the $z$ space.

  Let $x=X(z)$ be the inverse of
  $z=Z(x)$. Then $\frac{\partial Z}{\partial x}=I+O(|x|)$. Moreover,
  \[
    M(z):=m(X(z))=\sum_{i=1}^{N}\frac{\kappa_{i}(x_{0})}{2}z_{i}^{2}+O(|z|^{3}),\ \
    M_{z_{N}}(z',0)=0,\ \ \forall|z'|<3R.
  \]
  Denote $\tilde{m}(z)=m(X(z))$, $\tilde{W}(z)=W(X(z))$ and
  $\tilde{\beta}(z)=\beta(X(z))$. Then we have
  \begin{align*}
    F(W)&=\int_{\Omega}|\nabla W-\frac{\alpha W}{D}\nabla m|^{2}+Dk
          \int_{\partial\Omega}\beta W^{2}\\
        &=(1+O(R))\left\{\int_{\mathbb{R}^{N}_{+}}D\Big|\nabla \tilde{W}-\frac{\alpha \tilde{W}}{D}\nabla \tilde{m}\Big|^{2}dz+Dk
          \int_{\mathbb{R}^{N-1}}\tilde{\beta}\tilde{W}^{2}dz'\right\}\\
        &=(1+O(R))\left\{\int_{\mathbb{R}^{N}_{+}}\Bigg[ D\Big|\nabla
          \tilde{W}-\frac{\alpha \tilde{W}}{D}\nabla
          \tilde{m}\Big|^{2}-Dk\tilde{\beta}\tilde{W}\partial_{z_{N}}\tilde{W}-Dk\partial_{z_{N}}\tilde{\beta}W^{2}\Bigg]dz\right\}\\
        & \geq (1+O(R))\left(J+K-DkC\int_{\mathbb{R}^{N}_{+}}W^{2}dz\right),
  \end{align*}
  where
  \[
    J=\int_{\mathbb{R}^{N}_{+}}D\sum_{i=1}^{N-1}|\partial_{z_{i}}
    \tilde{W}-\frac{\alpha \tilde{W}}{D}\partial_{z_{i}}
    \tilde{m}|^{2}dz,
  \]
  \[
    K=\int_{\mathbb{R}^{N}_{+}}D\Big|\partial_{z_{N}}
    \tilde{W}-\frac{\alpha \tilde{W}}{D}\partial_{z_{N}}
    \tilde{m}\Big|^{2}-Dk\tilde{\beta}\tilde{W}\partial_{z_{N}}\tilde{W}dz.
  \]
  We can calculate
  \begin{align*}
    J&\geq
       -\sum_{i<N,\kappa_{i}>0}\int_{\mathbb{R}^{N}_{+}}2\alpha\partial_{x_{i}}\tilde{W}^{2}\partial_{x_{i}}\tilde{m}dz\\
     &=\alpha\sum_{i<N,\kappa_{i}>0}\int_{\mathbb{R}^{N}_{+}}\left[2\kappa_{i}+2e_{i}(D^{2}m(x)-D^{2}m(x_{0}))\right]\tilde{W}^{2}\\
     &\geq
       \alpha\left[\sum_{i=1}^{N-1}(|\kappa_{i}|+\kappa_{i})-2N\|D^{3}m\|_{L^{\infty}(\Omega)}R\right]\int_{\mathbb{R}^{N}_{+}}\tilde{W}^{2}.
  \end{align*}
  If $\kappa_{N}\leq 0$,
  \begin{align*}
    K&=\int_{\mathbb{R}^{N}_{+}}D\Big|\partial_{z_{N}}
       \tilde{W}-\frac{\alpha \tilde{W}}{D}\partial_{z_{N}}
       \tilde{m}\Big|^{2}-Dk\tilde{\beta}\tilde{W}\partial_{z_{N}}\tilde{W}dz\\
     &=\int_{\mathbb{R}^{N}_{+}}D\Big|\partial_{x_{N}}\tilde{W}-\frac{\alpha\tilde{W}}{D}\partial_{x_{N}}\tilde{m}-\frac{1}{2}k\tilde{\beta}\tilde{W}\Big|^{2}-\frac{1}{4}Dk^{2}\tilde{\beta}^{2}\tilde{W}^{2}+k\alpha\tilde{\beta}\tilde{W}^{2}\partial_{x_{N}}\tilde{m}dz\\
     &\geq\int_{\mathbb{R}^{N}_{+}}-(Dk^{2}\|\tilde{\beta}\|_{L^{\infty}}^{2}+k\alpha\|\tilde{\beta}\|_{L^{\infty}}\|D^{2}\tilde{m}\|_{L^{\infty}}R)\tilde{W}^{2}dz.
  \end{align*}
  Otherwise if $\kappa_{N}>0$,
  \begin{align*}
    K&=\int_{\mathbb{R}^{N}_{+}}D\Big|\partial_{z_{N}}
       \tilde{W}-\frac{\alpha \tilde{W}}{D}\partial_{z_{N}}
       \tilde{m}\Big|^{2}-Dk\tilde{\beta}\tilde{W}\partial_{z_{N}}\tilde{W}dz\\
     &=\int_{\mathbb{R}^{N}_{+}}D\Big|\partial_{x_{N}}\tilde{W}+\frac{\alpha\tilde{W}}{D}\partial_{x_{N}}\tilde{m}-\frac{1}{2}k\tilde{\beta}\tilde{W}\Big|^{2}-2\alpha\partial_{x_{N}}\tilde{W}^{2}\partial_{x_{N}}m-\frac{1}{4}Dk^{2}\tilde{\beta}^{2}\tilde{W}^{2}-k\alpha\tilde{\beta}\tilde{W}^{2}\partial_{x_{N}}\tilde{m}dz\\
     &\geq
       2\alpha\int_{\mathbb{R}^{N}_{+}}\tilde{W}^{2}\partial^{2}_{x_{N}x_{N}}\tilde{m}dz-\int_{\mathbb{R}^{N}_{+}}(Dk^{2}\|\tilde{\beta}\|_{L^{\infty}}^{2}+k\beta\|\tilde{\beta}\|_{L^{\infty}}\|D^{2}\tilde{m}\|_{L^{\infty}}R)\tilde{W}^{2}|\partial_{x_{N}}\tilde{m}|dz\\
     &\geq
       \int_{\mathbb{R}^{N}_{+}}\left[2\alpha\kappa_{N}-(Dk^{2}\|\tilde{\beta}\|_{L^{\infty}}^{2}
       +|D^{3}\tilde{m}\|_{L^{\infty}}R+k\alpha\|D^{2}\tilde{m}\|_{L^{\infty}}R)\right]\tilde{W}^{2}dz.
  \end{align*}
 Therefore, it holds
  \[
    F(W)\geq\alpha\left[\sum_{i=1}^{N}\left(|\kappa_{i}(x_{0})|+\kappa_{i}(x_{0})\right)-C(1+k^{2})(R+D)\right]\int_{\Omega}W^{2},
  \]
  for some constant $C$ depending only on $m$, $\beta$ and $\partial\Omega$.

  Lastly we prove (iii). Let $\{\tau_{1}(x_{0}),\cdots,\tau_{n-1}(x_{0}),n(x_{0})\}$ be
  an orthonormal eigenbasis of $D^{2}m_{\partial\Omega}(x_{0})$
  associated with eigenvalues
  $\{\kappa_{1}(x_{0}),\cdots,\kappa_{N}(x_{0})\}$ with
  $\kappa_{N}=0$. By the Schmidt process from the set
  $\{\tau_{1}(x_{0}),\cdot,\tau_{N-1}(x_{0}),n(x)\}$, we obtain obtain
  an orthonormal basis
  $\{\tau_{1}(x),\cdot,\tau_{N-1}(x),\tau_{N}(x)\}$ in
  $B(x_{0},R)\cap\bar{\Omega}$ with $\tau_{N}(x)=n(x)$. Then we have
  \begin{align*}
    F(W)&=\sum_{i=1}^{N}\int_{\Omega}D\left|\tau_{i}\cdot\left(\nabla
          W-\frac{\alpha W}{D}\nabla m\right)\right|^{2}+Dk\int_{\partial\Omega}\beta W^2\\
        &=\sum_{i=1}^{N-1}\int_{\Omega}D\left|\tau_{i}\cdot\nabla
          W+\mathrm{sgn}(\kappa_{i})\frac{\alpha W}{D}\nabla
          m\right|^{2}\\
        &\ \ \ -\sum_{\kappa_{i}>0}\int_{\Omega}2\alpha(\tau_{i}\cdot\nabla
          W^{2})(\tau_{i}\cdot\nabla m)\\
        &\ \ \ +\int_{\Omega}D\left|\tau_{N}\cdot\left(\nabla
          W-\frac{\alpha W}{D}\nabla m\right)\right|^{2}+Dk\int_{\partial\Omega}\beta W^2\\
        &\geq J+K,
    \end{align*}
    where
    \begin{align*}
      J:&=-\sum_{\kappa_{i}>0}\int_{\Omega}2\alpha(\tau_{i}\cdot\nabla
          W^{2})(\tau_{i}\cdot\nabla m)\\
        &=2\alpha\sum_{\kappa_{i}>0}\int_{\Omega}-\mathrm{div}(W^{2}(\tau_{i}\cdot\nabla
          m)\tau_{i})+W^{2}e_{i}D^{2}m e_{i}^{T}\\
        &=-2\alpha\sum_{i<N,\kappa_{i}>0}\int_{\partial\Omega\cap B(x_{0},R)}W^{2}(\tau_{i}\cdot\nabla
          m)\tau_{i}\cdot\tau_{N}+2\alpha\sum_{\kappa_{i}>0}\int_{\Omega}W^{2}\tau_{i}D^{2}m
          \tau_{i}^{T}\\
        &=2\alpha\sum_{i<N,\kappa_{i}>0}\int_{\Omega}W^{2}\tau_{i}D^{2}m \tau_{i}^{T},
    \end{align*}
    and
    \begin{align*}
      K:&=\int_{\Omega}D\left|\tau_{N}\cdot\left(\nabla
          W-\frac{\alpha W}{D}\nabla m\right)\right|^{2}+Dk\int_{\partial\Omega}\beta W^2\\
        &= \int_{\Omega}D|\tau_{N}\cdot\nabla
          W|^2+\frac{\alpha^2 W^2}{D}(\tau_{N}\cdot\nabla m)^2\\
        &\ \ \ -2\alpha W(\tau_{N}\cdot\nabla m)(\tau_{N}\cdot\nabla W)+Dk\int_{\Omega}\tau_{N}\cdot\nabla(\beta W^2)\\
        &=\int_{\Omega}D|\tau_{N}\cdot\nabla
          W|^2+\frac{\alpha^2 W^2}{D}(\tau_{N}\cdot\nabla m)^2\\
        &\ \ \ -\int_{\Omega}2(\tau_{N}\cdot\nabla W)[\alpha
          W(\tau_{N}\cdot\nabla m)-Dk\beta W]+Dk\int_{\Omega}
          W^2\tau_{N}\cdot\nabla \beta \\
        &=\int_{\Omega}\frac{\alpha^2 W^2}{D}(\tau_{N}\cdot\nabla m)^2
          -\frac{1}{D}\int_{\Omega}[\alpha W(\tau_{N}\cdot\nabla m)-Dk\beta W]^{2}+Dk\int_{\Omega} W^2\tau_{N}\cdot\nabla \beta \\
        &\geq\int_{\Omega}2\alpha k\beta\tau_{N}\cdot\nabla m W^{2}-Dk\beta^2W^2+Dk\tau_{N}\cdot\nabla \beta W^2.
    \end{align*}
    Therefore,
    \begin{align*}
      F(W)&\geq 2\alpha\sum_{\kappa_{i}>0}\int_{\Omega}W^{2}\tau_{i}D^{2}m \tau_{i}^{T}+\int_{\Omega}2\alpha k\beta\tau_{N}\cdot\nabla m W^{2}-Dk\beta^2W^2+Dk\tau_{N}\cdot\nabla \beta W^2\\
          &\geq\alpha \left[\sum_{i=1}^{N}(|\kappa_{i}|+\kappa_{i})+2k\beta(x_0)\nabla m(x_0)\cdot n(x_{0})-2N(\|m\|_{C^{3}(\bar{\Omega})}+\|\beta\|_{C^{1}(\bar{\Omega})})R\right]\int_{\Omega}W^{2}\\
          &\ \ \ -Dk[1+\|\beta\|_{C^{1}(\bar{\Omega})}]^{2}\int_{\Omega}W^{2}.
    \end{align*}
    The assertion (iii) follows.
  \end{proof}
  \begin{remark}\label{r1} If $\beta\geq 0$, it is easily checked that the estimate (ii) of Lemma \ref{lemma3.3} can be replaced by
    \[
      F(W)\geq\alpha\left[\sum_{i=1}^{N}\left(|\kappa_{i}(x_{0})|+\kappa_{i}(x_{0})\right)-CR\right]\int_{\Omega}W^{2}.
    \]
  \end{remark}

We are now in a position to give
 \vskip6pt
\noindent
 {\bf Proof of Theorem \ref{th1.2}:}
  By a similar argument as in Theorem \ref{theorem1} and Lemma
  \ref{lemma3.1}, we have
  \begin{align*}
    \liminf_{D\to 0}\lambda(D)&\geq\min\Big\{\min_{x\in\Sigma_{1}\cup\Sigma_{2}}\big\{V(x)+\alpha\sum_{i=1}^N(|\kappa_i(x)|+\kappa_i(x))\big\},\\
                                   &\ \ \ \min_{x\in\Sigma_{3}}\big\{V(x)+\alpha\sum_{i=1}^N(|\kappa_i(x)|+\kappa_i(x))+2\alpha k \beta(x)|\nabla m(x)|\big\}\Big\}=:\Lambda.
  \end{align*}
  It remains to show that
  \begin{equation}\label{th1.2-ab}\limsup_{D\to 0}\lambda(D)\leq \Lambda.
    \end{equation}
  It follows from Theorem \ref{theorem1} that
\begin{equation}\label{th1.2-aa}
    \lim_{D\to 0}\lambda(D)\leq\min_{x\in\Sigma_{1}}\Big\{V(x)+\alpha\sum_{i=1}^N(|\kappa_i(x)|+\kappa_i(x))\Big\}.
  \end{equation}

  Let $x_{0}\in \Sigma_{2}$. By a translation and rotation we may
  assume $x_{0}=0$, $n(x_{0})=(0,\cdots,0,-1)$ and
  $D^{2}m(x_{0})= \mathrm{diag}(\kappa_{1},\cdots,\kappa_{N})$. Using
  the test function in the proof of \cite[Theorem 8.1]{CL2}, we
  can show that
\begin{equation}\label{th1.2-a}
  \lim_{D\to 0}\lambda(D)\leq V(x_{0})+\alpha\sum_{i=1}^N(|\kappa_i(x_{0})|+\kappa_i(x_{0})).
\end{equation}
Fix a small positive constant $\delta$ and define
\[
  \zeta(x)=\frac{1}{\epsilon^{N/2}}\exp\left(-\frac{1}{2\epsilon^{2}}\sum_{i=1}^{N}(|\kappa_{i}|+\delta)x_{i}^{2}\right),
\]
where $\epsilon=\sqrt{\frac{D}{\alpha}}$.

Let
$w(x)=\frac{\zeta(x)}{\sqrt{c_{\epsilon,\delta}}}$ with
$c_{\epsilon,\delta}=\int_{\Omega}\zeta^{2}(x)$. One easily observes that
\[
  \lim_{\epsilon\to
    0}c_{\epsilon,\delta}=\int_{\mathbb{R}_{+}^{N}}\exp\left(-\sum_{i=1}^{N}(|\kappa_{i}|+\delta)y_{i}^{2}\right)dy=\frac{1}{2}\prod_{i=1}^{N}\sqrt{\frac{\pi}{|\kappa_{i}|+\delta}}=:c(\delta).
\]
Notice that
\[
  \lim_{\epsilon\to
    0}\epsilon\int_{\partial\Omega}w^{2}=\frac{1}{c(\delta)}\int_{\mathbb{R}^{N-1}}\exp\left(-\sum_{i=1}^{N-1}(|\kappa_{i}|+\delta)y_{i}^{2}\right)dy.
\]
Thus
\[
  \lim_{D\to 0}Dk\int_{\partial\Omega}\beta w^{2}=0.
\]
Letting $\epsilon\to 0$ first and then $\delta\to 0$ infers \eqref{th1.2-a}.

We may also use the test function in
\cite[Theorem 8.1]{CL2} to show
\begin{equation}\label{th1.2-ac}
  \lim_{D\to 0}\lambda(D)\leq
  \min_{x\in\Sigma_{3}}\Big\{V(x)+\alpha\sum_{i=1}^N(|\kappa_i(x)|+\kappa_i(x))+2\alpha k \beta(x))|\nabla m(x)|\Big\},
\end{equation}
Let $x_0\in\Sigma_3$, $b=\nabla m(x_{0})\cdot n(x_{0})>0$. By
translation and rotation, we may assume that $x_{0}=0$ and
$n(x_{0})=(0,\cdots,0,-1)$. Near $x_{0}$, the boundary
$\partial\Omega$ can be expressed as
\[
  x_{N}=\phi(x'), \mathrm{ where\ }
  x'=(x_{1},\cdots,x_{N-1}),\phi(0')=0,\nabla_{x'}\phi(0')=0'.
\]
Let $z=x_{N}-\phi(x')$, then
\begin{align*}
  m(x)&=m(x',\phi(x')+z)=m(x',\phi(x'))+zm_{x_{N}}(x',\phi(x'))+O(z^{2})\\
      &=m_{\partial\Omega}(x')+(x_N-\phi(x'))m_{x_N}(x',\phi(x'))+O(z^2),
\end{align*}
\begin{align*}
  m_{x_{i}}&=\partial_{x_{i}}m_{\partial\Omega}(x')-\phi_{x_{i}}(x')m_{x_{N}}(x_{0})+O(|z|+|x'|^{2})\\
           &=\kappa_{i}x_{i}+b\phi_{x_{i}}(x')+O(|z|+|x'|^{2}), \ \  \forall i=1,\cdots,N-1,
\end{align*}
\begin{align*}
  m_{x_{N}}&=m_{x_{N}}(x',\phi(x'))+O(|z|)\\
           &=-b+\sum_{i=1}^{N-1}a_{i}x_{i}+O(|z|+|x'|^{2}),
\end{align*}
where $m_{\partial\Omega}$ is the restriction of $m(x)$ on
$\partial\Omega$ and $a_{i}=m_{x_{i}x_{N}}(x_{0})$.

Let
$\epsilon=\sqrt{D/\alpha}$ and
$M=\delta+\frac{1}{2}\sum_{i=1}^{N}a_{i}^{2}/[|\kappa_{i}|+\delta]$,
where $\delta>0$ is a small constant. In
$B(x_{0},\epsilon)\cap\bar{\Omega}$, we define
\[
  \zeta(x)=\exp\left(-\frac{1}{\epsilon^{2}}\left[bz+\frac{1}{2}\sum_{i=1}^{N-1}[(|\kappa_{i}|+\delta)x_{i}^{2}-2a_{i}x_{i}z]+Mz^{2}\right]\right),
\]
with $z=\phi(x')$.  Notice that
$\zeta=O(e^{-\frac{\delta}{\epsilon}})$ on
$\partial B(x_{0},\sqrt{\epsilon})\cap\bar{\Omega}$. We may extend $w$ to $\bar{\Omega}$
such that $\zeta=O(e^{-\frac{\delta}{\epsilon}})$,
$|\nabla \zeta|=O(e^{-\frac{\delta}{\epsilon}})$ in
$\bar{\Omega}\setminus B(x_0,\sqrt{\epsilon})$.

Set
$w(x)=\frac{\zeta(x)}{\sqrt{\int_{\Omega}\zeta^{2}(x)}}$. It then follows that
\[
  \lambda(D)\leq \int_{\Omega}D|\nabla w-\frac{\alpha}{D}w\nabla
  m|^{2}+Vw^{2}+Dk\int_{\partial\Omega}\beta w^2.
\]
Since $\zeta=O(e^{-\frac{\delta}{\epsilon}})$, $|\nabla \zeta|=O(e^{-\frac{\delta}{\epsilon}})$ in $\bar{\Omega}\setminus B(x_0,\sqrt{\epsilon})$,
\[
  \int_{\Omega\setminus B(x_0,\sqrt{\epsilon})}D|\nabla
  w-\frac{\alpha}{D}w\nabla
  m|^{2}+Vw^{2}=O(e^{-\frac{\delta}{\epsilon}}).
\]
Let $y'=x'/\epsilon$ and $t=z/\epsilon^{2}$. We have
\[
  \int_{\Omega\setminus B(x_0,\sqrt{\epsilon})}D|\nabla
  w-\frac{\alpha}{D}w\nabla m|^{2}+Vw^{2}\leq
  \frac{\int_{0}^{\infty}\int_{\mathbb{R}^{N-1}}B(y',t)e^{-A(y',t)}dy'dt}{\int_{0}^{\infty}\int_{\mathbb{R}^{N-1}}e^{-A(y',t)}dy'dt},
\]
where
\[
  A(y',t)=2bt+\sum_{i=1}^{N-1}[|\kappa_{i}|+\delta]y_{i}^{2}-2\epsilon
  t a_{i}y_{i}+2M\epsilon^{2}t^{2},
\]
\[
  B(y',t)=V(x_{0})+O(\epsilon^{2}t+\epsilon
  |y'|)+\alpha\sum_{i=1}^{N-1}[|\kappa_{i}|+\delta+\kappa_{i}]^{2}y_{i}^{2}+O(\alpha)[t^{2}\epsilon^{2}+\epsilon
  t|y'|+\epsilon |y'|^{3}].
\]
Passing to the limit, we obtain
\[
  \limsup_{D\to 0}\int_{\Omega}D|\nabla w-\frac{\alpha}{D}w\nabla
  m|^{2}+Vw^{2}\leq V(x_0)+\frac{\alpha}{2}\sum_{i=1}^{N-1}
  \frac{[|\kappa_i|+\delta+\kappa_i]^2}{|\kappa_i|+\delta}.
\]
On the other hand, as $D\to 0$,
\[
  D\int_{\partial\Omega}\beta w^2\to
  \alpha\frac{\int_{\mathbb{R}^{N-1}}\beta(x_0)e^{-\sum_{i=1}^{N-1}(|\kappa_i|+\delta)y_i^2}dy'}{\int_{0}^{\infty}\int_{\mathbb{R}^{N-1}}e^{-2bt-\sum_{i=1}^{N-1}(|\kappa_i|+\delta)y_i^2}dy'dt}=2\alpha
  b\beta(x_0).
\]
Hence
\[
  \limsup_{D\to 0}\lambda(D)\leq
  V(x_0)+\frac{\alpha}{2}\sum_{i=1}^{N-1}
  \frac{[|\kappa_i|+\delta+\kappa_i]^2}{|\kappa_i|+\delta}+2\alpha k
  b\beta(x_0).
\]
Sending $\delta\to 0$ results in \eqref{th1.2-ac}.

A combination of \eqref{th1.2-aa}, \eqref{th1.2-a} and \eqref{th1.2-ac} implies \eqref{th1.2-ab}.
The proof is thus complete. {\hfill $\Box$}

\vskip10pt
The following result concerns the situation that $\beta(x)>0$ for all $x\in\partial\Omega$, $D\to 0$,
$k\to\infty$ and $\liminf kD>\tau>0$. 

\begin{thm} Assume that $\beta>0$ on $\partial\Omega$. The following assertions hold.

\begin{itemize}
\item[\rm{(i)}] If $\Sigma_{1}\cup\Sigma_{2}=\emptyset$, then
  \[
  \lim_{D\to 0,k\to\infty,\liminf
    Dk>0}\lambda(D)=+\infty.
\]

 \item[\rm{(ii)}] If $\Sigma_{1}\cup\Sigma_{2}\not=\emptyset$, and further assume that for all
  $x\in\Sigma_{2}$, $n(x)$ is an eigenvector of $D^{2}m(x)$ with
  the corresponding eigenvalue $\kappa_{N}(x)= 0$, then
\[
  \lim_{D\to 0,k\to\infty,\liminf
    Dk>0}\lambda(D)=\min_{x\in\Sigma_{1}\cup\Sigma_{2}}\Big\{V(x)+\alpha\sum_{i=1}^N(|\kappa_i(x)|+\kappa_i(x))\Big\},
\]
where $\kappa_N(x)=0$, $\kappa_1(x), \kappa_2(x),\cdots$ are eigenvalues of
$D^2m(x)$.

\end{itemize}
\end{thm}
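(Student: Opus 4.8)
The plan is to read everything off the variational characterization \eqref{eq:3.2}, using the tools already developed in this subsection: Lemma \ref{lemma3.1}(ii) (which is available precisely because $\beta>0$ and $Dk>\tau>0$) together with Lemmas \ref{lemma3.2}--\ref{lemma3.3}. The conceptual point is that Lemma \ref{lemma3.1}(ii) supplies a \emph{uniform} gain factor $1-\delta$ in front of $[(\nabla m\cdot n)^+]^2$: on $\{x\in\Omega:d(x)<R\}$ one has $[(1-d/R)^+]^2[(\nabla m\cdot n)^+]^2\le|\nabla m|^2$, while $(1-d/R)^+\equiv0$ elsewhere, so the bracket in \eqref{eq:3.10} is everywhere at least $\delta|\nabla m|^2$. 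Hence the boundary critical set $\Sigma_3$ no longer lowers the eigenvalue---in sharp contrast with Theorems \ref{theorem-N} and \ref{th1.2}---and the mass of the principal eigenfunction $w$ (normalized so that $\int_\Omega w^2=1$) concentrates only near $\Sigma_1\cup\Sigma_2$.

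Part (i) is then short: since $\Sigma_1\cup\Sigma_2=\emptyset$ there is $\delta_0>0$ with $|\nabla m|\ge\delta_0$ on $\bar\Omega$, so fixing any $\tau\in(0,\liminf Dk)$ and any $R\in(0,R_0)$, Lemma \ref{lemma3.1}(ii) gives $F(w)\ge(\alpha^2\delta\delta_0^2/D-C/R)\int_\Omega w^2$ once $D$ is small, whence $\lambda(D)\ge\alpha^2\delta\delta_0^2/D-C/R+\min_{\bar\Omega}V\to+\infty$ by \eqref{eq:3.2}. For part (ii) I would treat the two inequalities separately. The upper bound is immediate: $W_0^{1,2}(\Omega)\subset W^{1,2}(\Omega)$ and the boundary integral in \eqref{eq:3.2} vanishes on $W_0^{1,2}(\Omega)$, so the principal eigenvalue of \eqref{eq:3.1} never exceeds the Dirichlet principal eigenvalue $\lambda_{\mathrm{Dir}}(D)$ of \eqref{eq:1}, for every $k$; since the hypothesis imposed here on $\Sigma_2$ is exactly the one in Theorem \ref{theorem1}(ii), that theorem gives
\[
\limsup_{D\to0,\,k\to\infty,\,\liminf Dk>0}\lambda(D)\ \le\ \lim_{D\to0}\lambda_{\mathrm{Dir}}(D)\ =\ \min_{x\in\Sigma_1\cup\Sigma_2}\Big\{V(x)+\alpha\sum_{i=1}^N(|\kappa_i(x)|+\kappa_i(x))\Big\}=:\Lambda_0 .
\]

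For the lower bound I would mimic the partition-of-unity argument in the proof of Theorem \ref{theorem1}(ii): cover $\Sigma_1\cup\Sigma_2$ by finitely many small balls centred on it, let $\Omega_0$ be the complementary region (on which $|\nabla m|\ge\delta_1>0$), take a subordinate partition of unity $\{\zeta_k^2\}$ with $\sum_{k\ge1}|\nabla\zeta_k|^2$ bounded, and set $W_k=\zeta_k w$. On a ball around $x_k\in\Sigma_1$, Lemma \ref{lemma3.3}(i) applies verbatim. On a ball around $x_k\in\Sigma_2$ I would rerun the change of variables from the proof of Lemma \ref{lemma3.3}(ii), but \emph{keep} the nonnegative term $Dk\int_{\partial\Omega}\beta W_k^2$ rather than integrating it by parts, and use $\kappa_N(x_k)=0$ to discard the manifestly nonnegative normal-direction square $\int_{\mathbb{R}^N_+}D\big|\partial_{z_N}\widetilde W_k-\tfrac{\alpha}{D}\widetilde W_k\,\partial_{z_N}\widetilde m\big|^2$, so that only the tangential contribution survives; this yields $F(W_k)\ge\alpha\big[\sum_{i=1}^N(|\kappa_i(x_k)|+\kappa_i(x_k))-CR\big]\int_\Omega W_k^2$ with $C$ independent of $k$ and $D$. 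Combining with Lemma \ref{lemma3.2}, summing over $k\ge1$, using $\sum_{k\ge1}\int_\Omega W_k^2=1-\int_\Omega W_0^2$, and bounding $\int_\Omega W_0^2\le\int_{\Omega_0}w^2=O(D)$ via the a priori estimate $F(w)=\lambda(D)-\int_\Omega Vw^2\le\Lambda_0+1+\|V\|_{L^\infty}$ (from the upper bound just proved) together with the computation of part (i) on $\Omega_0$, one sends $D\to0$ and then $R\to0$ to obtain $\liminf\lambda(D)\ge\Lambda_0$, completing the proof.

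The step I expect to be the main obstacle is the $\Sigma_2$ estimate in the lower bound. Lemma \ref{lemma3.3}(ii) cannot be invoked as it stands: its hypothesis $\det D^2m(x_0)\ne0$ fails here (we have $\kappa_N(x_0)=0$), and its error term $C(1+k^2)(R+D)$ is not uniform as $k\to\infty$. Both difficulties are removed at once by retaining the nonnegative boundary integral and exploiting $\kappa_N(x_0)=0$ as above, which is what restores a bound uniform in $D$ and $k$; the one computational point that needs care is to check that the change of variables of \cite[Section 3.4]{CL2} continues to behave well---in particular that $\widetilde m$ has the expansion $\sum_{i=1}^{N-1}\tfrac{\kappa_i}{2}z_i^2+O(|z|^3)$ with $\partial_{z_N}\widetilde m(z',0)=0$---when $\kappa_N(x_0)=0$.
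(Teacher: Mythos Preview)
Your proposal is correct and follows essentially the same route as the paper: part (i) via Lemma \ref{lemma3.1}(ii), the upper bound in (ii) by inserting the compactly supported test functions of Theorem \ref{theorem1}(ii) (equivalently, by your Dirichlet comparison $\lambda(D)\le\lambda_{\mathrm{Dir}}(D)$), and the lower bound by the partition-of-unity argument with Lemma \ref{lemma3.1}(ii) controlling the mass on $\Omega_0$ and Lemma \ref{lemma3.3}(i) handling the $\Sigma_1$ balls. The one place where your write-up is more explicit than the paper is the $\Sigma_2$ estimate: the paper simply invokes Remark \ref{r1}, whose content is precisely your observation that when $\beta\ge0$ one may retain the nonnegative boundary term $Dk\int_{\partial\Omega}\beta W_k^2$ rather than integrate it by parts, thereby eliminating all $k$-dependence from the error and rendering the hypothesis $\det D^2m(x_0)\ne0$ of Lemma \ref{lemma3.3}(ii) irrelevant (when $\kappa_N=0$ the normal-direction square is simply discarded, as you say).
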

\begin{proof} The assertion (i) follows directly from Lemma \ref{lemma3.1}(ii).
  The proof of the assertion (ii) is similar to that of Theorem \ref{theorem1}(ii).
   Indeed, the test function in the proof of Theorem \ref{theorem1} can be used to derive the upper bound, while the
  lower bound follows using Lemma \ref{lemma3.1}(ii), Lemma \ref{lemma3.3}(i) and Remark \ref{r1}.
  The details are omitted here.
\end{proof}

\begin{remark}\label{r2.2}
More generally, we can consider the following eigenvalue problem
\begin{equation}\label{general-p}
\begin{cases}
  -D\Delta \phi -2\alpha\nabla m(x)\cdot \nabla\phi+V(x)\phi=\mu\phi &\hbox{ in }\Omega,\\
  \phi=0& \hbox{ on } \Gamma_1\\
  \frac{\partial\phi}{\partial n}+k_{1}\beta_{1}(x)\phi=0 &\hbox{ on
  }\Gamma_2,\\
  \frac{\partial\phi}{\partial n}+k_{2}\beta_{2}(x)\phi=0 &\hbox{ on
  }\Gamma_3,
\end{cases}
\end{equation}
where $\Gamma_1,\Gamma_2,\Gamma_{3}$ is a disjoint union of
$\partial\Omega$ with $\Gamma_1\cup\Gamma_2\cup\Gamma_{3}=\partial\Omega$, $\Gamma_1$ or $\Gamma_{3}$ may be empty, and
$\beta_{i}\in C(\bar{\Omega})\,(i=2,3)$ is a given function such that
$\beta_{i}(x)=0$ for $x\in\partial\Omega\setminus\Gamma_{i}$.

The principal eigenvalue $\lambda(D)$ is characterized by
\begin{equation}\nonumber
  \begin{aligned}
    \lambda(D)&=\inf_{\phi\in W^{1,2}(\Omega),\phi=0\hbox{ on
      }\Gamma_1,\phi\not\equiv 0}\frac{\int_{\Omega}e^{2\alpha
        m/D}(D|\nabla\phi|^2+V\phi^2)+D\int_{\partial\Omega}e^{2\alpha
        m/D}(k_{1}\beta_{1}+k_{2}\beta_{2})\phi^2}{\int_{\Omega}e^{2\alpha
        m/D}\phi^2}\\
    &=\inf_{w\in W^{1,2}(\Omega), \phi=0\hbox{ on }\Gamma_1,
      \int_{\Omega}w^{2}=1}\int_{\Omega}D\Big|\nabla
    w-\frac{\alpha}{D}w\nabla
    m\Big|^{2}+Vw^{2}+D\int_{\partial\Omega}(k_{1}\beta_{1}+k_{2}\beta_{2}) w^2.
  \end{aligned}
\end{equation}

Let
$\Sigma_1=\{x\in\Omega:\ |\nabla m(x)|=0\},
\Sigma_2'=\{x\in\Gamma_{1}:\ |\nabla m(x)|=0\}$,
$\Sigma_3'=\{x\in\Gamma_{2}:\ |\nabla m(x)|=0\}$,
$\Sigma_4'=\{x\in\Gamma_{3}:\ |\nabla m(x)|=0\}$ and
$\Sigma_5'=\{x\in\Gamma_{3}:\ |\nabla m(x)|=\nabla m(x)\cdot
n(x)>0\}$.

Assume that $n(x)$ is an eigenvector of $D^{2}m(x)$ with
the corresponding eigenvalue $\kappa_{N}(x)=0$ for all
$x\in\Sigma_{2}'\cup\Sigma_{3}'$, and $\mathrm{det}(D^{2}m(x))\neq 0$ and
$n(x)$ is an eigenvector of $D^{2}m(x)$ for all $x\in\Sigma_{4}'$. Then by the similar analysis as before,
we can assert that

\begin{itemize}
\item[\rm{(i)}] if $\Sigma_{1}\cup\Sigma_{2}'\cup\Sigma_{3}'\cup\Sigma_{4}'\cup\Sigma_{5}'=\emptyset$, then
  $\lim_{D\to 0}\lambda(D)=+\infty$.

 \item[\rm{(ii)}] if $\Sigma_{1}\cup\Sigma_{2}'\cup\Sigma_{3}'\cup\Sigma_{4}'\cup\Sigma_{5}'\not=\emptyset$, then
\begin{align*}
  \lim_{D\to 0}\lambda(D)&=\min\Big\{\min_{x\in\Sigma_{1}\cup\Sigma_{2}'\cup\Sigma_{3}'
                                \cup\Sigma_{4}'}\big\{V(x)+\alpha\sum_{i=1}^N(|\kappa_i(x)|+\kappa_i(x))\big\},\\
                              &\ \ \ \min_{x\in\Sigma_{5}'}\big\{V(x)+\alpha\sum_{i=1}^N(|\kappa_i(x)|+\kappa_i(x))+2\alpha k \beta(x_{0})|\nabla m(x_0)|\big\}\Big\}=:\lambda^*,
\end{align*}
where, when $x\in\Sigma_{1}\cup\Sigma_{2}'\cup\Sigma_{3}'\cup\Sigma_{4}'$,
$\kappa_1(x), \kappa_2(x),\cdots,\kappa_N$ are the eigenvalues of
$D^2m(x)$; when $x\in\Sigma_{5}'$, $\kappa_{N}(x)=0$ and
$\kappa_1(x), \kappa_2(x),\cdots,\kappa_{N-1}(x)$ are the eigenvalues of
$D^2m_{\partial\Omega}(x)$.

\end{itemize}

If we further assume that $\beta_{1}>0$, then it holds
\begin{itemize}
\item[\rm{(i)}] if $\Sigma_{1}\cup\Sigma_{2}'\cup\Sigma_{3}'\cup\Sigma_{4}'\cup\Sigma_{5}'=\emptyset$, then
  $\lim_{D\to 0,k_{1}\to +\infty,\liminf Dk_{1}>0}\lambda(D)=+\infty$.

 \item[\rm{(ii)}] if $\Sigma_{1}\cup\Sigma_{2}'\cup\Sigma_{3}'\cup\Sigma_{4}'\cup\Sigma_{5}'\not=\emptyset$, then
$\lim_{D\to 0,k_{1}\to +\infty,\liminf Dk_{1}>0}\lambda(D)=\lambda^*$,
where the definition of $\kappa_i$ is as above.

\end{itemize}

\end{remark}


\section{Asymptotic behavior as $D\to\infty$:\ Proof of Theorem \ref{th1.3}}
This section is devoted to the proof of Theorem \ref{th1.3}. To this aim, we recall the following trace theorem; see, for instance, \cite[Thereom 1.5.1.10]{Gr}.

\begin{lemma}
\label{lemma4.1}
Given any positive number $\epsilon$, there exists a constant
$C(\epsilon)$ such that
\[
  \int_{\partial\Omega}u^{2}\leq \epsilon\int_{\Omega}|\nabla
  u|^{2}+C(\epsilon)\int_{\Omega}u^{2}
\]
holds for any $u\in H^{1}(\Omega)$.
\end{lemma}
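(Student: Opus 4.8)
The plan is to prove the inequality first for functions smooth up to the boundary, by a divergence-theorem argument built on an auxiliary vector field, and then to pass to arbitrary $u\in H^{1}(\Omega)$ by density.

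The first step is to construct a suitable vector field. Since $\Omega$ is bounded and smooth, the signed distance function $d$ is smooth on the neighborhood $\partial\Omega(R_{0})$ introduced in Section 2 and satisfies $\nabla d(x)=-n(x)$ on $\partial\Omega$. Pick a cutoff $\chi\in C^{\infty}(\overline{\Omega})$ that is supported in $\overline{\Omega}\cap\partial\Omega(R_{0})$ and satisfies $\chi\equiv 1$ in a smaller neighborhood of $\partial\Omega$; then $V:=-\chi\,\nabla d$ (set equal to $0$ where $\chi=0$) defines a field $V\in C^{\infty}(\overline{\Omega};\mathbb{R}^{N})$ with $V=n$ on $\partial\Omega$. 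Write $M_{1}=\|V\|_{L^{\infty}(\Omega)}$ and $M_{2}=\|\mathrm{div}\,V\|_{L^{\infty}(\Omega)}$.

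For $u\in C^{\infty}(\overline{\Omega})$ we have $u^{2}\,V\cdot n=u^{2}$ on $\partial\Omega$, so the divergence theorem gives
\[
  \int_{\partial\Omega}u^{2}=\int_{\Omega}\mathrm{div}\,(u^{2}V)=\int_{\Omega}\big(2u\,\nabla u\cdot V+u^{2}\,\mathrm{div}\,V\big).
\]
By Cauchy--Schwarz and Young's inequality, for every $\epsilon>0$,
\[
  \int_{\partial\Omega}u^{2}\le 2M_{1}\Big(\int_{\Omega}u^{2}\Big)^{1/2}\Big(\int_{\Omega}|\nabla u|^{2}\Big)^{1/2}+M_{2}\int_{\Omega}u^{2}\le \epsilon\int_{\Omega}|\nabla u|^{2}+\Big(\frac{M_{1}^{2}}{\epsilon}+M_{2}\Big)\int_{\Omega}u^{2},
\]
which is the desired estimate on $C^{\infty}(\overline{\Omega})$, with $C(\epsilon)=M_{1}^{2}/\epsilon+M_{2}$. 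To conclude, given $u\in H^{1}(\Omega)$ choose $u_{j}\in C^{\infty}(\overline{\Omega})$ with $u_{j}\to u$ in $H^{1}(\Omega)$; applying the smooth-case inequality to $u_{j}-u_{\ell}$ shows that $\{u_{j}|_{\partial\Omega}\}$ is Cauchy in $L^{2}(\partial\Omega)$, its limit is $\mathrm{tr}\,u$, and the inequality passes to the limit.

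I do not expect a genuine obstacle: the only steps requiring care are the construction of the global field $V$ (which is exactly where the smoothness of $\partial\Omega$ enters) and the identification, in the density step, of the $L^{2}(\partial\Omega)$-limit of $u_{j}|_{\partial\Omega}$ with the trace of $u$. An alternative route is a contradiction/compactness argument: were the inequality to fail for some $\epsilon_{0}>0$, normalizing the would-be counterexamples by $\int_{\partial\Omega}u_{n}^{2}=1$ produces a sequence bounded in $H^{1}(\Omega)$ with $\int_{\Omega}u_{n}^{2}\to 0$, and the compactness of the trace map $H^{1}(\Omega)\to L^{2}(\partial\Omega)$ then forces $\int_{\partial\Omega}u_{n}^{2}\to 0$, a contradiction; the vector-field proof is, however, more elementary and furnishes an explicit $C(\epsilon)$.
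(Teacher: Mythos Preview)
Your proof is correct. The paper does not actually prove this lemma; it merely records it as a known fact and cites Grisvard's monograph \cite{Gr} (Theorem~1.5.1.10 there). Your divergence-theorem argument, based on a smooth vector field $V$ extending the outer normal, is a standard and complete proof; it even yields the explicit constant $C(\epsilon)=M_{1}^{2}/\epsilon+M_{2}$, which the citation alone does not. The alternative compactness argument you sketch is also valid but, as you note, less elementary and non-constructive.
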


 \vskip6pt
\noindent
 {\bf Proof of Theorem \ref{th1.3}:} We first note that
   \[
    \mu_1=\min_{\phi\in
      W^{1,2}(\Omega),\int_{\Omega}\phi^{2}=1}\left\{\int_{\Omega}|\nabla\phi|^{2}+\int_{\partial\Omega}\beta\phi^{2}\right\}
  \]
  and
  \begin{equation*}
  \begin{aligned}
    \lambda(D)&=\min_{\phi\in
      W^{1,2}(\Omega)}\frac{\int_{\Omega}e^{2\alpha
        m/D}(D|\nabla\phi|^{2}+V\phi^{2})+D\int_{\partial\Omega}e^{2\alpha
        m/D}\beta\phi^{2}}{\int_{\Omega}e^{2\alpha
        m/D}\phi^{2}}\\
    &=\inf_{w\in W^{1,2}(\Omega),
      \int_{\Omega}w^{2}=1}\left\{\int_{\Omega}D\Big|\nabla w-\frac{\alpha}{D}w\nabla m\Big|^{2}+Vw^{2}+D\int_{\partial\Omega}\beta w^2\right\}. \\
  \end{aligned}
\end{equation*}

We now verify (i). If $\mu_1>0$, there holds
\[
  \min_{\phi\in
    W^{1,2}(\Omega),\int_{\Omega}\phi^{2}=1}\left\{\int_{\Omega}|\nabla\phi|^{2}+\int_{\partial\Omega}\beta\phi^{2}\right\}=\mu_1>0.
\]
Let $\phi_{D}$ be the normalized eigenfunction of (\ref{eq:4.1}) corresponding to $\lambda(D)$ with
$\int_{\Omega}\phi_{D}^{2}=1$. Then
\[
  \int_{\Omega}|\nabla\phi_{D}|^{2}+\int_{\partial\Omega}\beta\phi_{D}^{2}\geq\mu>0.
\]
By Lemma \ref{lemma4.1}, for any given $\epsilon>0$ there exists a
constant $C(\epsilon)$ such that
\[
  \int_{\partial\Omega}\phi_{D}^{2}\leq \epsilon\int_{\Omega}|\nabla
  \phi_{D}|^{2}+C(\epsilon)\int_{\Omega}\phi_{D}^{2}=\epsilon\int_{\Omega}|\nabla
  \phi_{D}|^{2}+C(\epsilon).
\]
Thus it holds
\begin{equation*}
  \begin{aligned}
    \lambda(D)&=D\left(\int_{\Omega}|\nabla\phi_{D}|^{2}+\int_{\partial\Omega}\beta\phi_{D}^{2}\right)
    +\int_{\Omega}\left(-2\alpha\phi_{D}\nabla\phi_{D}\cdot\nabla
    m+\frac{\alpha^{2}}{D}\phi_{D}^{2}|\nabla
    m|^{2}\right)+\int_{\Omega}V\phi_{D}^{2}\\
    &\geq
    D\left(\int_{\Omega}|\nabla\phi_{D}|^{2}+\int_{\partial\Omega}\beta\phi_{D}^{2}\right)-\epsilon
    \int_{\Omega}|\nabla\phi_{D}|^{2}-C(\epsilon)\\
    &\geq
    (D-1)\mu_1+(1-\epsilon)\int_{\Omega}|\nabla\phi_{D}|^{2}-C\int_{\partial\Omega}\phi_{D}^{2}-C(\epsilon)\\
    &\geq
    (D-1)\mu_1+[1-(C+1)\epsilon]\int_{\Omega}|\nabla\phi_{D}|^{2}-C(\epsilon)\\
    &\geq (D-1)\mu_1-C(\epsilon),
  \end{aligned}
\end{equation*}
for any sufficiently small $\epsilon>0$. Letting $D\to +\infty$, we obtain $\lim_{D\to
  +\infty}\lambda(\alpha,D)=+\infty$.

\vskip8pt
We next prove (ii). Assume that $\mu_1<0$. Let
$\phi_{0}$ be the principal eigenfunction of (\ref{eq:4.2}) corresponding to $\mu_1$. We have
\[
  \int_{\Omega}|\nabla\phi_{0}|^{2}+\int_{\partial\Omega}\beta\phi_{0}^{2}=\mu_1<0.
\]
Thus
\begin{equation*}
  \begin{aligned}
    \lambda(D)&\leq
    D\left(\int_{\Omega}|\nabla\phi_{0}|^{2}+\int_{\partial\Omega}\beta\phi_{0}^{2}\right)
    +\int_{\Omega}\left(-2\alpha\phi_{0}\nabla\phi_{0}\cdot\nabla
    m+\frac{\alpha^{2}}{D}\phi_{0}^{2}|\nabla
    m|^{2}\right)+\int_{\Omega}V\phi_{0}^{2}\\
    &\leq D\mu_1 -C \to -\infty,\ \ \hbox{ as }D\to+\infty,
  \end{aligned}
\end{equation*}
and (ii) follows.

\vskip8pt Lastly we are going to prove (iii). If $\mu_1=0$, then
\[
  \int_{\Omega}|\nabla\phi_{0}|^{2}+\int_{\partial\Omega}\beta\phi_{0}^{2}=0.
\]
We may assume that
$\int_{\Omega}\phi_{0}^{2}=1$.
Choosing $\phi_{0}$ as a test function, we have
\begin{equation*}
  \begin{aligned}
    \lambda(D)&\leq
    D\left(\int_{\Omega}|\nabla\phi_{0}|^{2}+\int_{\partial\Omega}\beta\phi_{0}^{2}\right)
    +\int_{\Omega}\left(-2\alpha\phi_{0}\nabla\phi_{0}\cdot\nabla
    m+\frac{\alpha^{2}}{D}\phi_{0}^{2}|\nabla
    m|^{2}\right)+\int_{\Omega}V\phi_{0}^{2}\\
    &=\int_{\Omega}\left(-2\alpha\phi_{0}\nabla\phi_{0}\cdot\nabla
    m+\frac{\alpha^{2}}{D}\phi_{0}^{2}|\nabla
    m|^{2}\right)+\int_{\Omega}V\phi_{0}^{2}.
  \end{aligned}
\end{equation*}
This implies that $\lambda(D)\leq M$ for some positive constant $M$ independent of
$D\geq1$. In particular, it holds
 \begin{equation}
  \limsup_{D\to +\infty}\lambda(D)\leq
  \int_{\Omega}\left(V\phi_{0}^{2}-2\nabla m\cdot\nabla\phi_{0}\right).
\label{ld-a}
 \end{equation}
Let $\phi_{D}$ be the principal eigenfunction of (\ref{eq:4.1}) with
$\int_{\Omega}\phi_{D}^{2}=1$. Clearly,
\[
\int_{\Omega}|\nabla
\phi_{D}|^{2}+\int_{\partial\Omega}\beta\phi_{D}^{2}\geq 0.
\]
By means of Lemma \ref{lemma4.1}, it then follows that
\begin{equation*}
  \begin{aligned}
    \lambda(D)&=
    D\left(\int_{\Omega}|\nabla\phi_{D}|^{2}+\int_{\partial\Omega}\beta\phi_{D}^{2}\right)
    +\int_{\Omega}\left(-2\alpha\phi_{D}\nabla\phi_{D}\cdot\nabla
    m+\frac{\alpha^{2}}{D}\phi_{0}^{2}|\nabla
    m|^{2}\right)+\int_{\Omega}V\phi_{0}^{2}\\
    &\geq
    D\left(\int_{\Omega}|\nabla\phi_{D}|^{2}-C\int_{\partial\Omega}\phi_{D}^{2}\right)+\int_{\Omega}-2\alpha\phi_{0}\nabla\phi_{0}\cdot\nabla
    m-C\\
    &\geq D\left((1-C\epsilon)\int_{\Omega}|\nabla\phi_{D}|^{2}-C(\epsilon)\right)-\int_{\Omega}|\nabla\phi_{D}|^{2}-C\\
    &\geq
    D\left((1-C\epsilon)\int_{\Omega}|\nabla\phi_{D}|^{2}-C(\epsilon)\right)-C,\ \ \forall D\geq1,
  \end{aligned}
\end{equation*}
which implies that $\int_{\Omega}|\nabla \phi_{D}|^{2}\leq C$ for
some positive constant $C$ independent of $D\geq1$.

We claim that $\phi_{D}\to \phi_{0}$ weakly in $W^{1,2}(\Omega)$ and
strongly in $L^{2}(\Omega)$. Indeed, since $\phi_{D}$ are uniformly
bounded in $W^{1,2}(\Omega)$, there exists a subsequence of
$\{\phi_{D}\}$, still labelled by itself for convenience, and a function $\phi\in W^{1,2}(\Omega)$ such that
$\phi_{D}\to \phi$ weakly in $W^{1,2}(\Omega)$ and strongly in
$L^{2}(\Omega)$ as $D\to+\infty$. It is easy to see that $\phi$ satisfying $\phi\geq0$ a.e. in $\Omega$ is a weak (and then a classical) solution to
\begin{equation*}
\begin{cases}
  -\Delta \phi=0 &\hbox{ in }\Omega,\\
  \frac{\partial\phi}{\partial n}+\beta(x)\phi=0 &\hbox{ on }\partial\Omega,
\end{cases}
\end{equation*}
and $\int_{\Omega}\phi^{2}=1$. Due to the uniqueness of principal eigenfunction (up to multiplication) corresponding to the principal eigenvalue $\mu_1=0$, we can infer that $\phi=\phi_{0}$ and the claim is proved.

Therefore, we obtain
\begin{equation*}
  \begin{aligned}
    &\liminf_{D\to +\infty} \lambda(D)\\&= \liminf_{D\to
      +\infty}\left\{
      D\left(\int_{\Omega}|\nabla\phi_{D}|^{2}+\int_{\partial\Omega}\beta\phi_{D}^{2}\right)+\int_{\Omega}\left(-2\alpha\phi_{D}\nabla\phi_{D}\cdot\nabla
      m+\frac{\alpha^{2}}{D}\phi_{0}^{2}|\nabla
      m|^{2}\right)+\int_{\Omega}V\phi_{0}^{2}\right\}\\
    &\geq\lim_{D\to +\infty}\int_{\Omega}
    -2\alpha\phi_{0}\nabla\phi_{0}\cdot\nabla
    m+\int_{\Omega}V\phi_{0}^{2}\\
    &= \int_{\Omega}V\phi_{0}^{2}-2\alpha\nabla
    m\cdot\nabla\phi_{0},
\end{aligned}
\end{equation*}
which, together with \eqref{ld-a}, completes the proof of (iii).

\vskip8pt It is easily seen that {\rm (i)} holds if $\beta\geq,\not\equiv0$ and {\rm (iii)} holds if $\beta\equiv0$. It remains to
show $\mu_1<0$ and in turn (ii) holds when either $\int_{\partial\Omega}\beta<0$ or $\int_{\partial\Omega}\beta=0$ and $\beta\not\equiv0$.

Let $\phi_{0}$ be the principal eigenfunction of (\ref{eq:4.2}) corresponding to $\mu_1$. Then $\phi_{0}>0$ on $\bar\Omega$. Dividing the equation in (\ref{eq:4.2}) by $\phi_{0}$ and integrating the resulting equation by parts, we deduce
 \begin{equation}\label{ld-b}
 \mu_1|\Omega|=-\int_\Omega\frac{\Delta\phi_{0}}{\phi_{0}}=-\int_{\partial\Omega}\frac{\partial\phi_{0}}{\partial n}\cdot\frac{1}{\phi_{0}}-\int_\Omega\frac{|\nabla\phi_{0}|^2}{\phi_{0}^2}=\int_{\partial\Omega}\beta-\int_\Omega\frac{|\nabla\phi_{0}|^2}{\phi_{0}^2}.
 \end{equation}
Thus, $\mu_1\leq0$ if $\int_{\partial\Omega}\beta\leq0$. In particular, $\mu_1<0$ provided that $\int_{\partial\Omega}\beta<0$.

If $\int_{\partial\Omega}\beta=0$ and $\beta\not\equiv0$, we suppose that $\mu_1=0$. It then follows from \eqref{ld-b} that
$\phi_{0}$ must be a positive constant. Using the boundary condition in (\ref{eq:4.2}), we see that $\beta\equiv0$, arriving at a contradiction. Hence, $\mu_1<0$ holds under our assumption. The proof of Theorem \ref{th1.3} is now complete.
{\hfill $\Box$}

\begin{remark}\label{r3} We point out that the result similar to Theorem \ref{th1.3} holds for a more general eigenvalue problem, such as the following eigenvalue problem
\begin{equation}\label{general-p-a}
\begin{cases}
  -D\Delta \phi -2\alpha\nabla m(x)\cdot \nabla\phi+V(x)\phi=\mu\phi &\hbox{ in }\Omega,\\
  \phi=0& \hbox{ on } \Gamma_1\\
  \frac{\partial\phi}{\partial n}+\beta(x)\phi=0 &\hbox{ on
  }\Gamma_2,
\end{cases}
\end{equation}
where $\Gamma_1\cup\Gamma_2=\partial\Omega,\ \Gamma_1\cap\Gamma_2=\emptyset$, and $\beta\in C(\Gamma_2)$ is a given nonnegative function.
  \end{remark}

In one space dimension, we can improve Theorem \ref{th1.3}. By taking $\Omega=(0,1)$ without loss of generality,
we are led to consider the eigenvalue problem
\begin{equation}
  \label{eq:4.3}
\begin{cases}
  -D \phi_{xx} -2\alpha m_x\phi_x+V\phi=\lambda\phi,\ 0<x<1,\\
  -\phi_x(0)+k_0\phi(0)=0,\ \phi_x(1)+k_1\phi(1)=0.
\end{cases}
\end{equation}
Indeed, we have

\begin{cor}\label{cor} The following assertions hold.
  \begin{itemize}
\item[\rm{(i)}] If $k_0>-1$ and $ k_0+k_1+k_0k_1>0$, then
  $\lim_{D\to+\infty}\lambda(D)=+\infty$.

\item[\rm{(ii)}]  If either $k_0>-1$ and $k_0+k_1+k_0k_1<0$ or $k_0\leq-1$, then
  $\lim_{D\to+\infty}\lambda(D)=-\infty$.

\item[\rm{(iii)}]  If $k_0>-1$ and $k_0+k_1+k_0k_1=0$, then
  \[
    \lim_{D\to+\infty}\lambda(D)=\int_{0}^{1}\Big(V(x)\phi_{0}^{2}(x)-2\alpha
    m_x(x)({\phi_{0}})_x(x)\Big)dx,
  \]
  where $\phi_{0}$ is the solution to
\begin{equation}
  \label{eq:4.3}
\begin{cases}
  -\phi_{xx}=0,\ \ \phi(x)>0,\ \  0<x<1,\\
  -\phi_{x}(0)+k_0\phi(0)=0,\ \phi_{x}(1)+k_1\phi(1)=0,\\
  \int_{0}^{1}\phi^{2}(x)dx=1.
\end{cases}
\end{equation}

  \end{itemize}
\end{cor}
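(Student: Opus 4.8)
The plan is to reduce the whole statement to determining the sign of $\mu_1$, the principal eigenvalue of \eqref{eq:4.2} in one space dimension, and then to invoke Theorem \ref{th1.3}. With $\Omega=(0,1)$, problem \eqref{eq:4.2} reads $-\phi_{xx}=\mu\phi$ on $(0,1)$ with $-\phi_x(0)+k_0\phi(0)=0$ and $\phi_x(1)+k_1\phi(1)=0$ (i.e.\ $\beta(0)=k_0$, $\beta(1)=k_1$), and $\mu_1$ has the variational characterization $\mu_1=\inf\{\,\int_0^1\phi_x^2+k_0\phi(0)^2+k_1\phi(1)^2:\ \phi\in H^1(0,1),\ \int_0^1\phi^2=1\,\}$. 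Hence it suffices to show: $\mu_1>0$ under the hypotheses of (i); $\mu_1<0$ under those of (ii); and $\mu_1=0$, with an explicitly identified principal eigenfunction, under those of (iii). The conclusions then follow from parts (i), (ii), (iii) of Theorem \ref{th1.3}.

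First I would treat the case $k_0>-1$, where $\psi(x):=1+k_0x$ is positive on $[0,1]$ and solves $-\psi_{xx}=0$ together with the boundary condition at $x=0$. Performing the ground-state substitution $\phi=\psi u$ in the Rayleigh quotient and integrating by parts once (using $\psi_x\equiv k_0$, $\psi(0)=1$, $\psi(1)=1+k_0$), the numerator $\int_0^1\phi_x^2+k_0\phi(0)^2+k_1\phi(1)^2$ becomes
\[
  \int_0^1(1+k_0x)^2u_x^2+(1+k_0)(k_0+k_1+k_0k_1)\,u(1)^2 .
\]
Since $1+k_0>0$, the sign of the boundary coefficient equals that of $k_0+k_1+k_0k_1$, and the three cases are now transparent. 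If $k_0+k_1+k_0k_1=0$, this expression equals $\int_0^1(1+k_0x)^2u_x^2\ge0$ and vanishes exactly when $u$ is constant, so $\mu_1=0$ and the minimizer — hence the principal eigenfunction — is a multiple of $1+k_0x$. If $k_0+k_1+k_0k_1<0$, taking $u\equiv1$ (i.e.\ $\phi=1+k_0x$) makes the numerator negative, so $\mu_1<0$. If $k_0+k_1+k_0k_1>0$, both terms are nonnegative, and combining this with a standard Poincaré inequality $\int_0^1u^2\le C\big(\int_0^1u_x^2+u(1)^2\big)$ together with $\min(1,(1+k_0)^2)\le(1+k_0x)^2\le\max(1,(1+k_0)^2)$ gives $\mu_1>0$.

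Next I would handle $k_0\le-1$, where the substitution above degenerates because $1+k_0x$ vanishes on $(0,1]$; here I claim $\mu_1<0$ in all cases. This follows directly from the variational characterization with the test function $\phi(x)=1+\varepsilon-x$: its numerator is $1+k_0(1+\varepsilon)^2+k_1\varepsilon^2$, which equals $1+k_0\le0$ for $\varepsilon=0$ and is strictly negative for a suitable $\varepsilon\ge0$ (any $\varepsilon$ if $k_0<-1$; a small $\varepsilon>0$ if $k_0=-1$, since then the numerator is $-2\varepsilon+(k_1-1)\varepsilon^2$). Thus $\mu_1<0$ throughout this regime, supplying the remaining part of the hypothesis of (ii).

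Combining the last two paragraphs yields the dichotomy $\mu_1>0$ exactly in case (i), $\mu_1<0$ exactly in case (ii), $\mu_1=0$ exactly in case (iii); note that (i), (ii), (iii) do partition all $(k_0,k_1)$. Parts (i) and (ii) of the corollary then follow from Theorem \ref{th1.3}(i),(ii). For part (iii), after $L^2(0,1)$-normalization the principal eigenfunction is $\phi_0(x)=(1+k_0x)\big/\big(\int_0^1(1+k_0s)^2\,ds\big)^{1/2}$; one checks directly that it solves the boundary value problem for $\phi_0$ in (iii) (the Robin condition at $x=1$ being precisely $k_0+k_1+k_0k_1=0$), and inserting it into Theorem \ref{th1.3}(iii) gives $\lim_{D\to+\infty}\lambda(D)=\int_0^1\big(V\phi_0^2-2\alpha m_x(\phi_0)_x\big)$, as claimed. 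The only slightly delicate points are the strict inequality $\mu_1>0$ in case (i) (which genuinely needs the Poincaré inequality, not merely nonnegativity of the transformed quotient) and the borderline value $k_0=-1$ in case (ii); everything else is routine algebra and an appeal to Theorem \ref{th1.3}.
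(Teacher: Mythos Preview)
Your argument is correct and, like the paper's, reduces everything to determining the sign of the Robin principal eigenvalue $\mu_1$ on $(0,1)$ and then invoking Theorem~\ref{th1.3}. Where the paper first characterizes $\mu_1=0$ by solving $-\phi_{xx}=0$ explicitly (finding $\phi=ax+b$ and translating positivity and the boundary conditions into $k_0>-1$, $k_0+k_1+k_0k_1=0$), and then infers the sign of $\mu_1$ in the remaining regions by continuity and monotonicity of $\mu_1(k_0,k_1)$ in each parameter, you instead perform the ground-state substitution $\phi=(1+k_0x)u$, valid for $k_0>-1$, which converts the Rayleigh numerator into $\int_0^1(1+k_0x)^2u_x^2+(1+k_0)(k_0+k_1+k_0k_1)u(1)^2$ and makes all three sign cases immediate; for $k_0\le-1$ you bypass the degenerate substitution with the explicit trial function $1+\varepsilon-x$. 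Your route is a bit more self-contained (it does not appeal to monotonicity of the eigenvalue in the boundary parameters, which the paper takes for granted), at the cost of needing the auxiliary Poincar\'e inequality $\int_0^1u^2\le C(\int_0^1u_x^2+u(1)^2)$ to secure strict positivity in case~(i). The paper's approach, on the other hand, gives the geometric picture of the zero-level curve $\{k_0>-1,\ k_0+k_1+k_0k_1=0\}$ separating the two half-planes, which motivates their Figure~1. Both lead to the same trichotomy and the same identification of $\phi_0$ as a multiple of $1+k_0x$ in case~(iii).
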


\begin{proof} We first consider the limiting problem \eqref{eq:4.3}. Assume that $\phi$ solves \eqref{eq:4.3}. Then
 $0$ is the principal eigenvalue of the following eigenvalue problem
 \begin{equation}
  \label{eq:4.3a}
\begin{cases}
  -\phi_{xx}=\mu\phi,\ \  0<x<1,\\
  -\phi_{x}(0)+k_0\phi(0)=0,\ \phi_{x}(1)+k_1\phi(1)=0.
\end{cases}
\end{equation}

By the Hopf boundary lemma, it is easily seen that
$\phi>0$ on $[0,1]$. From the first equation in \eqref{eq:4.3a}, it further follows that
$\phi(x)=ax+b$ for $x\in[0,1]$. Thus, $\phi(0)=b>0$ and $\phi(1)=a+b>0$. Clearly, if
 \begin{equation}\label{con1}
 b>0,\ \ \ a+b>0,
 \end{equation}
then $\phi(x)=ax+b>0$ for all $x\in[0,1]$. Thanks to the boundary condition, we have $-a+k_0b=0$ and $a+k_1(a+b)=0$, equivalently,
 \begin{equation}\label{con2}
 a=k_0b,\ \ \ k_0+k_1+k_0k_1=0.
 \end{equation}

If $a=0$, by \eqref{con1} and \eqref{con2}, obviously $k_0=k_1=0$. If $a>0$, then
it follows from \eqref{con1} and \eqref{con2} that $k_0>0$ and $a+b=a(1+\frac{1}{k_0})>0$, which becomes equivalent to $k_0>0$.
If $a<0$, in order to satisfy \eqref{con1} and \eqref{con2}, it is necessary that $k_0<0$ and $a+b=a(1+\frac{1}{k_0})>0$, that is,
$-1<k_0<0$.

The above analysis shows that if $0$ is the principal eigenvalue of the eigenvalue problem \eqref{eq:4.3a}, then
$k_0$ and $k_1$ must satisfy
  \begin{equation}\label{con3}
 k_0>-1,\ \  \ k_0+k_1+k_0k_1=0.
 \end{equation}
It can be also seen from the above analysis that $0$ is the principal eigenvalue once the condition \eqref{con3} is fulfilled.
That is, $0$ is the principal eigenvalue of \eqref{eq:4.3a} if and only if \eqref{con3} holds.

Denote by the principal eigenvalue $\mu_1(k_0,k_1)$ of \eqref{eq:4.3a}. Clearly, $\mu_1(k_0,k_1)$ depends continuously on the parameters $k_0$ and $k_1$, and is nondecreasing in $k_0,\,k_1\in\mathbb{R}$. One further observes that $\mu_1(k_0,k_1)>\mu_1(0,0)=0$ if $k_0,\,k_1>0$. These facts, combined with the previous analysis, enable us to assert that $\mu(k_0,k_1)>0$ if $k_0>-1$ and $ k_0+k_1+k_0k_1>0$ while $\mu_1(k_0,k_1)<0$ if either $k_0>-1$ and $k_0+k_1+k_0k_1<0$ or $k_0\leq-1$; one may refer to Figure 1.

As a consequence,  Corollary \ref{cor} follows by using Theorem \ref{th1.3}.
\end{proof}

\begin{figure}
\label{fig:a}
  \centering
  \includegraphics[width=5cm]{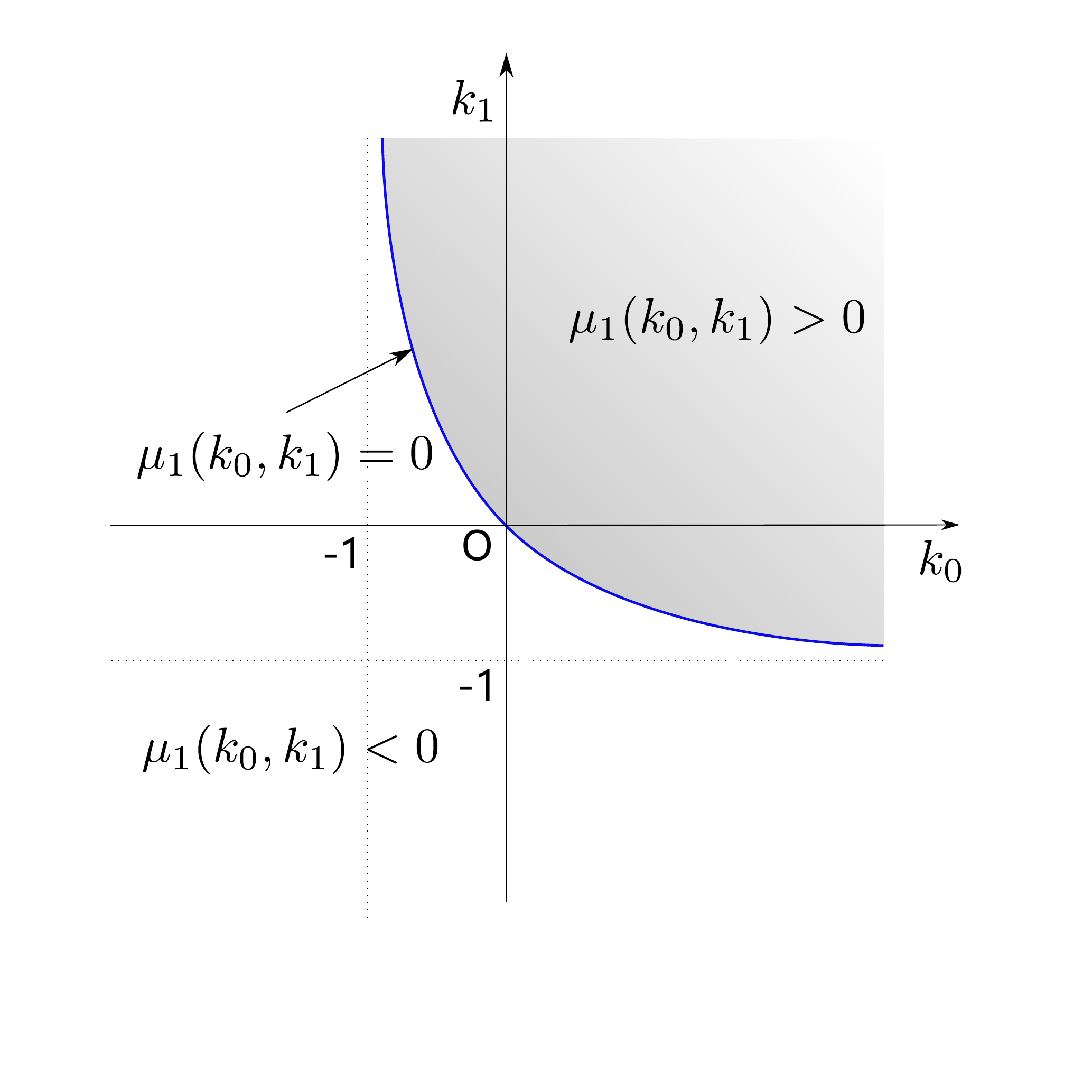}
  \caption{The principal eigenvalue $\mu_1(k_0,k_1)$ of \eqref{eq:4.3a}: \ $\mu_1(k_0,k_1)=0$ on the curve $\{(k_0,k_1)\in\mathbb{R}^2:\ k_0+k_1+k_0k_1=0,\ k_0>-1\}$;
  $\mu_1(k_0,k_1)>0$ when $(k_0,k_1)$ lies above the curve, and $\mu_1(k_0,k_1)<0$ when $(k_0,k_1)$ lies below the curve.}
\end{figure}

For the following eigenvalue problem
\begin{equation}
  \label{eq:4.4}
\begin{cases}
  -D \phi_{xx}-2\alpha m_{x}\phi_{x}+V\phi=\lambda\phi,\ 0<x<1,\\
  -\phi_{x}(0)+k_0\phi(0)=0,\ \phi(1)=0,
\end{cases}
\end{equation}
one can employ a similar but simpler argument as in Corollary \ref{cor} to deduce

\begin{cor}\label{cor1} The following assertions hold.
  \begin{itemize}
\item[\rm{(i)}] If $k_0>-1$, then $\lim_{D\to+\infty}\lambda(D)=+\infty$.

\item[\rm{(ii)}] If $k_0<-1$, then   $\lim_{D\to+\infty}\lambda(D)=-\infty$.

\item[\rm{(iii)}] If $k_0=-1$, then
  \[
    \lim_{D\to+\infty}\lambda(D)=\int_{0}^{1}\Big(V(x)\phi_{0}^{2}(x)-2\alpha
    m_{x}(x)(\phi_{0})_{x}(x)\Big)dx,
  \]
  where $\phi_{0}$ is the solution to
\begin{equation}
  \nonumber
\begin{cases}
  -\phi_{xx}=0,\ \ \phi(x)>0,\ \  0<x<1,\\
  -\phi_{x}(0)+k_0\phi(0)=0,\ \phi(1)=0,\\
  \int_{0}^{1}\phi^{2}(x)dx=1.
\end{cases}
\end{equation}

  \end{itemize}
\end{cor}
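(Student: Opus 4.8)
The plan is to mirror --- and to simplify --- the proof of Corollary \ref{cor}. First I would pass to the limiting ($D=\infty$) problem
\[
  -\phi_{xx}=\mu\phi \ \text{ in }(0,1),\qquad -\phi_x(0)+k_0\phi(0)=0,\quad \phi(1)=0,
\]
and denote by $\mu_1=\mu_1(k_0)$ its principal eigenvalue. Problem \eqref{eq:4.4} is of the mixed Dirichlet--Robin type of Remark \ref{r3}, with the Robin coefficient supported at $x=0$ and equal to $k_0$; moreover the proof of Theorem \ref{th1.3} uses only the trace inequality of Lemma \ref{lemma4.1} together with the variational characterisations of $\lambda(D)$ and of $\mu_1$, every boundary term being controlled through $|k_0|\int_{\partial\Omega}\phi^2$ rather than through any sign of $k_0$. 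Hence the trichotomy of Theorem \ref{th1.3} applies verbatim: $\lambda(D)\to+\infty$ if $\mu_1(k_0)>0$; $\lambda(D)\to-\infty$ if $\mu_1(k_0)<0$; and $\lambda(D)\to\int_0^1\big(V\phi_0^2-2\alpha m_x(\phi_0)_x\big)\,dx$ if $\mu_1(k_0)=0$, with $\phi_0$ the positive solution of the limiting problem normalised by $\int_0^1\phi_0^2=1$. Thus the whole task reduces to locating the sign of $\mu_1(k_0)$ as $k_0$ varies.

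To do that I would first show that $\mu_1(k_0)=0$ precisely when $k_0=-1$. If $0$ is the principal eigenvalue, its positive eigenfunction is affine, $\phi(x)=ax+b$; the condition $\phi(1)=0$ forces $a=-b$, and $-\phi_x(0)+k_0\phi(0)=0$ then reads $(1+k_0)b=0$. Here $b=\phi(0)\ne0$ --- otherwise the Robin condition also yields $\phi_x(0)=0$, whence $\phi\equiv0$ by uniqueness for the ODE --- so $k_0=-1$; conversely, when $k_0=-1$ the function $\phi(x)=1-x$ is positive on $(0,1)$ and solves the limiting problem with $\mu=0$, so $\mu_1(-1)=0$. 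Next I would invoke the variational formula
\[
  \mu_1(k_0)=\min_{\substack{\phi\in W^{1,2}(0,1),\ \phi(1)=0\\ \int_0^1\phi^2=1}}\Big\{\int_0^1|\phi_x|^2\,dx+k_0\,\phi(0)^2\Big\}
\]
to see that $k_0\mapsto\mu_1(k_0)$ is continuous and strictly increasing: for $k_0<k_0'$, testing $\mu_1(k_0)$ with the normalised principal eigenfunction $\psi$ of the problem with parameter $k_0'$ gives $\mu_1(k_0)\le \mu_1(k_0')-(k_0'-k_0)\psi(0)^2<\mu_1(k_0')$, again because $\psi(0)\ne0$. Combining with $\mu_1(-1)=0$ we obtain $\mu_1(k_0)>0$ for $k_0>-1$ and $\mu_1(k_0)<0$ for $k_0<-1$, and assertions (i)--(iii) drop out of the trichotomy above.

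The one point that requires care is the reduction in the first step: Theorem \ref{th1.3} and Remark \ref{r3} are stated for a nonnegative Robin coefficient, whereas here $k_0$ may be negative and part of $\partial\Omega$ carries a Dirichlet condition, so one cannot quote them verbatim. I expect this to be essentially routine, however --- the three steps of the proof of Theorem \ref{th1.3} go through unchanged in this setting, the sign of $k_0$ entering only through the harmless factor $|k_0|$ in the trace estimate --- and in one space dimension one may in any case re-derive the trichotomy directly using the explicit test functions and the compactness argument of that proof. All remaining computations are standard.
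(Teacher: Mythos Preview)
Your proposal is correct and follows essentially the same route as the paper: the paper's own argument for Corollary~\ref{cor1} is simply ``a similar but simpler argument as in Corollary~\ref{cor}'', namely determining the sign of the principal eigenvalue $\mu_1(k_0)$ of the limiting problem by solving the affine equation with the mixed boundary conditions and then invoking Theorem~\ref{th1.3}. Your observation that Remark~\ref{r3} is stated only for nonnegative $\beta$ while $k_0$ may be negative is accurate, and your resolution---that the proof of Theorem~\ref{th1.3} goes through unchanged because the boundary term is handled via the trace inequality (Lemma~\ref{lemma4.1}) with $|k_0|$ rather than any sign condition---is exactly right; the paper simply takes this for granted.
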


\vskip4pt
A result parallel to Corollary \ref{cor1} holds for the following eigenvalue problem
 \begin{equation}
  \label{eq:4.5}
\begin{cases}
  -D \phi_{xx} -2\alpha m_{x}\phi_{x}+V\phi=\lambda\phi,\ 0<x<1,\\
  \phi(0)=0,\ \phi_{x}(1)+k_1\phi(1)=0.
\end{cases}
\end{equation}

\section{An application to a reaction-diffusion-advection equation in a stream}

More recently, there is growing interest in modeling and understanding spatial population
dynamics in advective environments, i.e., environments where individuals are
exposed to unidirectional flow or biased dispersal; one may see \cite{LL,LLM,VL2} and the references therein.
The following reaction-diffusion-advection equation was used in \cite{JHSL,LPL,MJJL} to describe the
dynamics of a single species living in a spatially heterogeneous stream:
\begin{equation}\label{5.1}
  u_t-[Du_x-q(x)u]_x=r(x)u-u^2,\ \ t>0,\ 0<x<1,
\end{equation}
where the unknown function $u(t,x)$ is the density of the species at the time $t$ and location $x$,
the positive constant $D$ stands for the diffusive rate, the function $r\in C([0,1])$ represents the intrinsic growth rate (or the quality of the habitat)
and the species population will grow for $r>0$ while decline for $r<0$. The nonnegative function $q\in C^1([0,1])$ accounts for the
advection, pointing towards larger $x\geq0$, which therefore implies that $x=0$ is the upstream end of the stream and $x=1$
is the downstream end. It is now widely recognized that
variations in the stream flow are critically important for the ecosystem integrity of
riverine environments; see, for example, \cite{BA,PAB}. Thus, we assume that $q$ is a function of the spatial variation $x$.
In particular, the function $q$ may vanish somewhere the habitat $[0,1]$, which can reflect the existence of {\it buffer zone} in the stream.

Boundary conditions (B.C) should be prescribed at the upstream end $x=0$
and the downstream end $x=1$. Since  the upstream boundary $x=0$ is the stream surface and usually no individuals will pass through,
it is natural to impose no flux boundary condition there; that is, we have
 \begin{equation}\label{5.2}
 Du_x(t,0)-q(0)u(t,0)=0,\ \ t>0.
 \end{equation}

As for the B.C. at the downstream end $x=1$, there are several possible choices motivated by different ecological
scenarios as discussed in \cite{LL}. In the following, we shall only consider three types of B.C.

\vskip8pt
{\it Type 1:\ No-flux B.C.}\ \ Gravity pulls algae in a lake or ocean towards the bottom (advection), whereas buoyancy
allows for upward movement (diffusion) \cite{HAES}. The upstream boundary is the water surface, the downstream boundary
is the ground, and so no flux crosses the downstream end $x=1$. This leads us to assume that
\begin{equation}\label{5.2-a}
 Du_x(t,1)-q(1)u(t,1)=0,\ \ t>0.
 \end{equation}
Together with \eqref{5.2}, we refer to this situation as no-flux/no-flux, or NF/NF for short.

\vskip8pt
{\it Type 2:\ Free-flux B.C.}\ \ When the stream flows
into a freshwater lake, individuals can enter the downstream end of the stream from
the lake by diffusion. The flux into the lake is only the advective flux, the diffusive
flux into and from the lake balances \cite{LL,VL1}. Hence, we have
the following downstream condition
\begin{equation}\label{5.2-b}
 u_x(t,1)=0,\ \ t>0.
 \end{equation}
We refer to this and \eqref{5.2} as no-flux/free-flow or NF/FF for short.

\vskip8pt
{\it Type 3:\ Hostile B.C.}\ \ When individuals do not return into the patch after leaving at the downstream end, we
obtain the hostile downstream condition:
 \begin{equation}\label{5.2-c}
 u(t,1)=0,\ \ t>0,
 \end{equation}
We refer to this and \eqref{5.2} as no-flux/hostile or NF/H for short.
For example, most freshwater organisms die when they reach the ocean. Such a downstream
condition was originally proposed in \cite{SG}.

\vskip8pt Given a continuous initial datum $u(0,x)=u_0(x)\geq,\not\equiv0$ on $[0,1]$,
under one of the three types of B.C. mentioned above, it is well known that \eqref{5.1} admits a unique solution $u(t,x)$,
which exists for all time $t>0$ and $u(t,x)>0$ for all $t>0,\,0<x<1$.
It is also a standard fact that the long-time behavior of the single species governed
by \eqref{5.1} is determined by the sign of the principal eigenvalue to the following eigenvalue problem
(linearized at zero of problem \eqref{5.1}):
 \begin{equation}\label{5.3}
  -[D\psi_x-q(x)\psi]_x-r(x)\psi=\lambda\psi,\ \ \ 0<x<1,
\end{equation}
subject to the NF/NF or NF/FF or NF/H B.C. In any B.C. case, we always have

\begin{proposition}\label{prop4.1} Let $\lambda(D)$ denote the principal eigenvalue of \eqref{5.3}. The following assertions hold.
  \begin{itemize}
\item[\rm{(i)}] If $\lambda(D)\geq0$, then $u(t,x)\to0$ uniformly on $[0,1]$ as $t\to\infty$.

\item[\rm{(ii)}] If $\lambda(D)<0$, then $u(t,x)\to U(x)$ uniformly on $[0,1]$ as $t\to\infty$, where $U$ is the unique positive steady state solution of \eqref{5.1}.

  \end{itemize}

\end{proposition}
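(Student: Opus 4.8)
The plan is to deduce Proposition~\ref{prop4.1} from the comparison principle and the order-preserving structure of the semiflow generated by \eqref{5.1}, using that $f(x,u)=u(r(x)-u)$ is of logistic type ($f(x,0)=0$ and $u\mapsto f(x,u)/u$ strictly decreasing) together with the principal eigenfunction of \eqref{5.3}. Write $\psi>0$ for the principal eigenfunction of \eqref{5.3} under the prescribed boundary condition, normalized by $\max_{[0,1]}\psi=1$; by the Hopf boundary lemma $\psi>0$ on all of $[0,1]$ in the NF/NF and NF/FF cases, while $\psi>0$ on $[0,1)$ and $\psi_x(1)<0$ in the NF/H case. A preliminary step, used in both parts, is the uniform-in-time bound $\sup_{t\ge0}\|u(t,\cdot)\|_{L^\infty(0,1)}<\infty$. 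Here the flux boundary conditions are a nuisance, since constants are not supersolutions of \eqref{5.1}; I would remove it by the change of variables $u=e^{Q(x)}v$ with $Q(x)=\frac1D\int_0^xq$, which recasts \eqref{5.1} as $v_t-Dv_{xx}-qv_x=v\big(r-e^{Q}v\big)$, turns \eqref{5.2}--\eqref{5.2-a} into Neumann/Neumann, \eqref{5.2-b} into a Robin condition with the correct outflow sign, and \eqref{5.2-c} into a Dirichlet condition at $x=1$. In each case a sufficiently large constant is a supersolution of the $v$-equation since $e^{Q}\ge e^{Q_{\min}}>0$, which after transforming back bounds $u$; moreover $\bar u(x):=e^{Q(x)}M$ is then a stationary supersolution of \eqref{5.1} itself.

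For part (i), the key observation is that for every $C>0$ and $t_0\ge0$ the function $\bar u(t,x)=Ce^{-\lambda(D)(t-t_0)}\psi(x)$ is a supersolution of \eqref{5.1} satisfying the boundary condition: using the eigenvalue equation one checks directly that $\bar u_t-[D\bar u_x-q\bar u]_x-r\bar u+\bar u^2=\bar u^2\ge0$. Taking $t_0=1$ and $C$ large (by parabolic regularity, and the Hopf lemma in the NF/H case, so that $u(1,\cdot)\le C\psi$ on $[0,1]$), the comparison principle gives $0\le u(t,x)\le Ce^{-\lambda(D)(t-1)}\psi(x)$ for $t\ge1$. If $\lambda(D)>0$ this tends to $0$ uniformly. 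If $\lambda(D)=0$, I first show that $0$ is the only nonnegative steady state: a positive steady state $U$ would be a positive solution of $-[DU_x-qU]_x+(U-r)U=0$, so $0$ would be the principal eigenvalue of $\psi\mapsto-[D\psi_x-q\psi]_x+(U-r)\psi$, whose zeroth-order coefficient $U-r$ exceeds $-r$ on $(0,1)$, whence by strict monotonicity of the principal eigenvalue in that coefficient it would exceed $\lambda(D)=0$, a contradiction. Then the solution issuing from $C\psi$ (a stationary supersolution) is nonincreasing in $t$, converges in $C^2([0,1])$ to a nonnegative steady state by interior and boundary Schauder estimates, hence to $0$, and $u$ is squeezed to $0$.

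For part (ii), assume $\lambda(D)<0$ and choose $\varepsilon>0$ with $\varepsilon\|\psi\|_{L^\infty}<|\lambda(D)|$. Then $\underline u=\varepsilon\psi$ is a strict subsolution, since $\underline u_t-[D\underline u_x-q\underline u]_x-r\underline u+\underline u^2=\varepsilon\psi(\lambda(D)+\varepsilon\psi)<0$, and it satisfies the boundary condition; paired with the stationary supersolution $\bar u=e^{Q(x)}M$ ($M$ large enough that $\underline u\le\bar u$), the monotone iteration shows the solution from $\underline u$ increases, the solution from $\bar u$ decreases, and both converge in $C^2([0,1])$ to steady states $U_*\le U^*$ with $U_*\ge\varepsilon\psi\not\equiv0$; by the strong maximum principle $U_*>0$ in $(0,1)$, so a positive steady state exists. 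Uniqueness again uses the monotonicity of the principal eigenvalue: any positive steady state $U$ satisfies $\varepsilon\psi\le U\le\bar u$ for suitable $\varepsilon,M$, hence $U_*\le U\le U^*$; then $U_*\le U^*$ are positive eigenfunctions with common eigenvalue $0$ of the operators with zeroth-order coefficients $U_*-r$ and $U^*-r$, so $U_*\not\equiv U^*$ would give $0=\lambda_1(U_*-r)<\lambda_1(U^*-r)=0$, impossible; thus $U_*=U^*=:U$. Finally, for arbitrary $u_0\ge,\not\equiv0$, the strong maximum principle and the Hopf lemma make $u(1,\cdot)$ bounded below by a multiple of $\psi$ and above by $\bar u$, so $\varepsilon\psi\le u(1,\cdot)\le\bar u$ for suitable $\varepsilon,M$; squeezing $u$ between the two monotone orbits, which both converge to $U$, yields $u(t,\cdot)\to U$ uniformly.

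The main obstacle is the borderline case $\lambda(D)=0$ in part (i): there is no exponential decay to rely on, so one must exclude positive steady states and exploit the monotone-in-time comparison against the solution from $C\psi$. A secondary point needing care is producing genuine ordered sub- and supersolutions despite the flux boundary conditions, for which the exponential substitution $u=e^{Q}v$ is the essential device; apart from these, everything is a routine use of the parabolic comparison principle, the strong maximum principle and Schauder estimates.
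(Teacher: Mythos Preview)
Your argument is correct and is precisely the standard sub-/supersolution and monotone–semiflow argument for logistic equations. Note, however, that the paper does not actually prove Proposition~\ref{prop4.1}: it is introduced with the phrase ``it is also a standard fact that the long-time behavior \ldots\ is determined by the sign of the principal eigenvalue,'' and the proposition is stated without any proof. So there is no ``paper's own proof'' to compare against; what you have written is a careful expansion of the folklore result the authors are invoking.

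A couple of minor remarks on your write-up. First, the exponential substitution $u=e^{Q}v$ is exactly the one the paper uses (immediately after Proposition~\ref{prop4.1}) to pass from \eqref{5.3} to \eqref{5.4-1}--\eqref{5.4-3}, so your handling of the flux boundary conditions is fully in line with the paper's framework. Second, in the NF/H case your claim that $u(1,\cdot)\le C\psi$ for large $C$ relies on $u(1,\cdot)$ vanishing linearly at $x=1$; this is fine by parabolic Schauder estimates up to the boundary (which give $u(1,\cdot)\in C^1$), and the Hopf lemma for $\psi$ gives $\psi_x(1)<0$, so the quotient $u(1,\cdot)/\psi$ is bounded near $x=1$. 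You state this, but it is the one place where the three boundary cases genuinely differ and deserves the emphasis you give it.
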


Set $\psi(x)=\phi(x)\exp\big({\frac{1}{D}\int_0^x q(s)ds}\big)$. Then \eqref{5.3} coupled with the NF/NF B.C (corresponding to \eqref{5.2} and \eqref{5.2-a}) is reduced to the following
 \begin{equation}\label{5.4-1}
  -D\phi_{xx}-q(x)\phi_x-r(x)\phi=\lambda\phi,\ \ 0<x<1;\ \ \ \phi_x(0)=\phi_x(1)=0,
\end{equation}
and \eqref{5.3} coupled with the NF/FF B.C (corresponding to \eqref{5.2} and \eqref{5.2-b}) becomes
 \begin{equation}\label{5.4-2}
  -D\phi_{xx}-q(x)\phi_x-r(x)\phi=\lambda\phi,\ \ 0<x<1;\ \ \ \phi_x(0)=\phi_x(1)+\frac{q(1)}{D}\phi(1)=0,
\end{equation}
and  \eqref{5.3} coupled with the NF/H B.C (corresponding to \eqref{5.2} and \eqref{5.2-c}) becomes
 \begin{equation}\label{5.4-3}
  -D\phi_{xx}-q(x)\phi_x-r(x)\phi=\lambda\phi,\ \ 0<x<1;\ \ \ \phi_x(0)=\phi(1)=0.
\end{equation}

For sake of clarity, instead of $\lambda(D)$, we use $\lambda(NF/NF)$, $\lambda(NF/FF)$ and $\lambda(NF/H)$ to denote the principal eigenvalue of \eqref{5.4-1} \eqref{5.4-2} and \eqref{5.4-3}, respectively.

In what follows, we shall consider four types of the stream flow $q$, depending on how $q$ vanishes (if it happens) on the habitat $[0,1]$; we call the vanishing region of $q$ as the {\it buffer} since the flow speed/velocity there is zero:

\vskip8pt

  \begin{itemize}
\item[\rm{ Case (a)}:]\ $q>0\ \mbox{on}\ [0,1]$. No buffer exists.
\vskip3pt
\item[\rm{ Case (b)}:]\ $q=0\ \mbox{on}\ [0,x_0]$ for some $0<x_0<1$ and $q>0\ \mbox{on}\ (x_0,1]$:\ \ $[0,x_0]$ is the buffer.
\vskip3pt
\item[\rm{ Case (c)}:]\ $q=0\ \mbox{on}\ [x_1,x_2]$ for some $0<x_1<x_2<1$ and $q>0\ \mbox{on}\ [0,x_1)\cup(x_2,1]$:\ \ $[x_1,x_2]$ is the buffer.
\vskip3pt
\item[\rm{ Case (d)}:]\ $q=0\ \mbox{on}\ [x_0,1]$ for some $0<x_0<1$ and $q>0 \ \mbox{on}\ [0,x_0)$:\ \ $[x_0,1]$ is the buffer.

  \end{itemize}

In each case mentioned above, due to Corollaries \ref{cor} and \ref{cor1}, we have
$$
 \lim_{D\to\infty}\lambda(NF/NF)=-\int_0^1 r(x)dx,\ \  \lim_{D\to\infty}\lambda(NF/H)=\infty.
 $$
By some simple elliptic compactness analysis, one can also easily see that
 $$
 \lim_{D\to\infty}\lambda(NF/FF)=-\int_0^1 r(x)dx.
 $$

In the sequel, for convenience we refer to a species who moves/migrates fast (i.e., the diffusion rate $D$ is large) as a faster species, and a species who moves/migrates slowly (i.e., $D$ is small) as a slower species.

In light of Proposition \ref{prop4.1}, the above results imply biologically that, regardless of whether a buffer zone exists or not, when the downstream end belongs to a hostile environment, a faster species will eventually die out, while a faster species may persist in the long run even when the downstream end satisfies a flux-free or a flow free condition.

In what follows, we are concerned with the effect of a buffer zone on the persistence/extinction of a slower species.
Making use of Remark \ref{r-N}, Theorem \ref{th1.2} and Remark \ref{r2.2}, we are able to state

\begin{proposition}\label{prop4.2} The following assertions hold.

  \begin{itemize}
\item[\rm{(1)}] Assume that {\rm case (a)} holds, we have
 $$
 \lim_{D\to0}\lambda(NF/NF)=-r(1),\ \lim_{D\to0}\lambda(NF/FF)=\lim_{D\to0}\lambda(NF/H)=\infty.
 $$

\item[\rm{(2)}] Assume that {\rm case (b)} holds, we have
 $$
 \lim_{D\to0}\lambda(NF/NF)=\min\big\{\min_{x\in[0,x_0]}(-r(x)),\,-r(1)\big\},
 $$
 $$
 \lim_{D\to0}\lambda(NF/FF)=\lim_{D\to0}\lambda(NF/H)=\min_{x\in[0,x_0]}(-r(x)).
 $$

\item[\rm{(3)}] Assume that {\rm case (c)} holds, we have
 $$
 \lim_{D\to0}\lambda(NF/NF)=\min\big\{-r(1),\,\min_{x\in[x_1,x_2]}(-r(x))\big\},$$
 $$
 \lim_{D\to0}\lambda(NF/FF)=\lim_{D\to0}\lambda(NF/H)=\min_{x\in[x_1,x_2]}(-r(x)).
 $$

\item[\rm{(4)}] Assume that {\rm case (d)} holds, we have
 $$
 \lim_{D\to0}\lambda(NF/NF)=
 \lim_{D\to0}\lambda(NF/FF)=\lim_{D\to0}\lambda(NF/H)=\min_{x\in[x_0,1]}(-r(x)).
 $$
  \end{itemize}

\end{proposition}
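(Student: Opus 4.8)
\noindent\emph{Proof strategy.} The plan is to recast each of \eqref{5.4-1}--\eqref{5.4-3} as a one-dimensional instance of the eigenvalue problems of Sections~2--3 and then invoke Theorem~\ref{theorem-N} (via Remark~\ref{r-N}) and Remark~\ref{r2.2}. Fix $\alpha>0$, set $m(x)=\frac1{2\alpha}\int_0^x q(s)\,ds\in C^2([0,1])$ and $V=-r$; then the operator in \eqref{5.4-1}--\eqref{5.4-3} is exactly $-D\phi_{xx}-2\alpha m_x\phi_x+V\phi$. In dimension one $D^2m(x)$ is the scalar $m_{xx}(x)=q_x(x)/(2\alpha)$, so its only eigenvalue is $\kappa_1(x)=q_x(x)/(2\alpha)$, every unit vector (in particular $n$) is an eigenvector, and a boundary point of $\Sigma_2^*$ or $\Sigma_3$ carries no curvature term ($\kappa_N=0$, with no tangential eigenvalues left). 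Two observations organize the computation: (i) wherever $q$ vanishes on a subinterval, $q_x$ and hence $\kappa_1$ vanish on its closure, so on every buffer the curvature term $\alpha(|\kappa_1|+\kappa_1)$ drops out and only $V=-r$ remains; (ii) because $q\ge0$, one has $\nabla m\cdot n\le0$ at $x=0$ and $\nabla m\cdot n\ge0$ at $x=1$, whence $\Sigma_2^*=\Sigma_3=\{1\}$ when $q(1)>0$ and $\Sigma_2^*=\Sigma_3=\emptyset$ when $q(1)=0$, while $x=0$ never lies in $\Sigma_2^*$ or $\Sigma_3$. Finally $\Sigma_1^*=\{x\in[0,1]:q(x)=0\}$ equals $\emptyset$, $[0,x_0]$, $[x_1,x_2]$, $[x_0,1]$ in Cases~(a)--(d), with $\kappa_1\equiv0$ on it throughout.

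For NF/NF, \eqref{5.4-1} is precisely the Neumann problem \eqref{eq:N}, so Theorem~\ref{theorem-N}---valid here without its technical hypothesis by Remark~\ref{r-N}---gives $\lim_{D\to0}\lambda(NF/NF)=\min_{x\in\Sigma_1^*\cup\Sigma_2^*}(-r(x))$; evaluating $\Sigma_1^*\cup\Sigma_2^*$ in the four cases yields the four asserted formulas. For NF/H, \eqref{5.4-3} is the mixed problem \eqref{general-p} with $\Gamma_1=\{1\}$ (Dirichlet), $\Gamma_2=\{0\}$ carrying the trivial Robin condition $\beta_1\equiv0$ (i.e.\ $\phi_x(0)=0$), and $\Gamma_3=\emptyset$; then $\Sigma_4'=\Sigma_5'=\emptyset$, $\Sigma_2'=\{1\}$ iff $q(1)=0$, $\Sigma_3'=\{0\}$ iff $q(0)=0$, and the conclusion of Remark~\ref{r2.2} specializes, using $\kappa_1\equiv0$ on the buffer, to $\lim_{D\to0}\lambda(NF/H)=\min_{x\in\Sigma_1\cup\Sigma_2'\cup\Sigma_3'}(-r(x))$ (read as $+\infty$ if this set is empty). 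In Cases~(a),(c) the Dirichlet endpoint $x=1$ contributes nothing, since $q(1)>0$ there and a Dirichlet critical-point set carries no $\Sigma_3$-type term (cf.\ Remark~\ref{r-D2}(iii)); the four values come out exactly as claimed.

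For NF/FF, Case~(d) is immediate: $q(1)=0$ forces the downstream condition in \eqref{5.4-2} to collapse to $\phi_x(1)=0$, so $\lambda(NF/FF)=\lambda(NF/NF)$. In Cases~(a),(b),(c), $q(1)>0$ and the downstream condition is $\partial_n\phi+\frac{q(1)}{D}\phi=0$ at $x=1$, a Robin condition with coefficient of order $1/D$; this is the regime $k_1\to\infty$, $\liminf Dk_1>0$ of the last part of Remark~\ref{r2.2}. Applying it with $\Gamma_1=\emptyset$, $\Gamma_2=\{1\}$, $\beta_1\equiv q(1)>0$, $k_1=1/D$ (so $Dk_1\equiv1$), and $\Gamma_3=\{0\}$ with $\beta_2\equiv0$ (the upstream Neumann condition), one has $\Sigma_2'=\Sigma_3'=\Sigma_5'=\emptyset$ and $\Sigma_4'=\{0\}$ iff $q(0)=0$, so $\lim_{D\to0}\lambda(NF/FF)=\min_{x\in\Sigma_1\cup\Sigma_4'}(-r(x))$ (again $+\infty$ if empty), since the would-be boundary contribution at $x=1$ is $V(1)+q(1)^2/D\to+\infty$ and never realizes the minimum. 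This gives $+\infty$ in Case~(a) and $\min_{[0,x_0]}(-r)$, $\min_{[x_1,x_2]}(-r)$ in Cases~(b),(c), matching the statement.

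The only genuine difficulty is the NF/FF analysis in Cases~(a)--(c): the Robin coefficient $q(1)/D$ is not a fixed constant, so one must use the $k\to\infty$, $\liminf Dk>0$ regime rather than the fixed-$k$ theorems, and must check that a mixed problem with an honestly Neumann upstream end ($\beta_2\equiv0$) still meets those hypotheses. A minor technical point is that on a buffer $\det(D^2m)=m_{xx}=q_x/(2\alpha)$ vanishes, so the non-degeneracy hypotheses fail there; one discards them exactly as in Remarks~\ref{r-N} and \ref{r-D2}(iv) (they are redundant when $N=1$), or, in Case~(b), sidesteps the matter by noting $\inf_{x\in(0,x_0]}(-r(x))=\min_{x\in[0,x_0]}(-r(x))$ by continuity, so the interior set $\Sigma_1=(0,x_0]$---where no non-degeneracy is required---already delivers the sharp value, the matching upper bound coming from a test function concentrated at a minimizer of $-r$ on $[0,x_0]$. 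Incidentally, Case~(a) of NF/FF also follows elementarily: integrating by parts in the Rayleigh quotient gives, for every $w\in H^1(0,1)$, $\int_0^1 D|w_x-\tfrac{q}{2D}w|^2+q(1)w(1)^2=\int_0^1\big(Dw_x^2+\tfrac12 q_xw^2+\tfrac{1}{4D}q^2w^2\big)+\tfrac12 q(1)w(1)^2+\tfrac12 q(0)w(0)^2\ge\tfrac{1}{4D}\int_0^1(q^2+2Dq_x)w^2$, which is $\ge\tfrac{\delta^2}{8D}\int_0^1 w^2$ once $q\ge\delta>0$ and $D$ is small, so $\lambda(NF/FF)\to+\infty$ just as in the proof of Theorem~\ref{theorem1}(i).
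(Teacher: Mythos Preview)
Your proposal is correct and follows the paper's own approach: the paper simply states that Proposition~\ref{prop4.2} follows by ``making use of Remark~\ref{r-N}, Theorem~\ref{th1.2} and Remark~\ref{r2.2}'', and you carry out precisely this reduction, identifying $m(x)=\frac1{2\alpha}\int_0^xq$, $V=-r$, and tracking the sets $\Sigma_1^*,\Sigma_2^*,\Sigma_i'$ in each case. Your treatment is in fact more careful than the paper's one-line justification on one point: for NF/FF in Cases~(a)--(c) the downstream Robin coefficient $q(1)/D$ is $D$-dependent, so the fixed-$k$ Theorem~\ref{th1.2} does not apply directly, and you correctly invoke the $k_1\to\infty$, $\liminf Dk_1>0$ clause of Remark~\ref{r2.2} (with $k_1=1/D$, $\beta_1=q(1)>0$), a distinction the paper glosses over. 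Your handling of the degeneracy $\det D^2m=0$ on the buffer via Remarks~\ref{r-N} and~\ref{r-D2}(iv), together with the continuity workaround in Case~(b), is also appropriate and slightly more explicit than the paper.
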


From the viewpoint of ecological evolution, Propositions \ref{prop4.1} and \ref{prop4.2} suggest that if no buffer exits,
a slower species will become extinct eventually when the downstream end satisfies the free-flow or hostile B.C, and it may survive when
the downstream end satisfies a NF B.C. In sharp contrast, if a buffer exits, a slower
species may persist in any B.C. case; in other words, a buffer may contribute to the species persistence. Nevertheless, whether
a buffer is indeed helpful to the species persistence relies on the sign of the maximum value of the growth rate function $r$ over the buffer for the NF/FF or NF/H B.C., and the sign of the maximum value of the growth rate function $r$ over the buffer and the downstream end for the NF/NF B.C.

\vskip20pt


\begin{thebibliography}{}
\bibliographystyle{siam}
\setlength{\baselineskip}{12pt}





\bibitem{BHN}
H. Berestycki, F. Hamel, N. Nadirashvili, Elliptic eigenvalue
problems with large drift and applications to nonlinear propagation
phenomena, Comm. Math. Phys. 253(2005), 451-480.





\bibitem{BA}
S.E. Bunn, A.H. Arthington, Basic principles and ecological consequences of altered flow
regimes for aquatic biodiversity, Environ Manag. 30(2002), 492-507.


\bibitem{CL1}
X.F. Chen, Y. Lou, Principal eigenvalue and eigenfunctions of an
elliptic operator with large advection and its application to a
competition model, Indiana Univ. Math. J. 57(2008), 627-658.

\bibitem{CL2}
X.F. Chen, Y. Lou, Effects of diffusion and advection on the
smallest eigenvalue of an elliptic operator and their applications,
Indiana Univ. Math. J. 61(2012), 45-80.






\bibitem{DF}
A. Devinatz, R. Ellis, A. Friedman, The asymptotic behavior of the
first real eigenvalue of the second-order elliptic operator with a
small parameter in the higher derivatives, II. Indiana Univ. Math.
J. 23(1973/74), 991-1011.


\bibitem{Du} Y. Du, Order Structure and Topological Methods in Nonlinear Partial Differential Equations,
Vol. 1, Maximum Principles and Applications, World Scientific, Singapore, 2006.







\bibitem{Ec}
M. Eckhoff, Precise asymptotics of small eigenvalues of reversible diffusions
in the metastable regime, Ann. Probab. 33(2005), 244-299.

\bibitem{Evans}
L.C. Evans, Partial differential equations. Graduate Studies in Mathematics,
American Mathematical Society, Providence, RI, 1998. ISBN: 0-8218-0772-2

\bibitem{FK}
P. Freitas, D. Krej\v{c}i\v{r}\'{i}k, The first Robin eigenvalue with negative boundary parameter,
Adv. Math. 280(2015), 322-339.



\bibitem{Fr}
A. Friedman, The asymptotic behavior of the first real eigenvalue
of a second order elliptic operator with a small parameter in the
highest derivatives, Indiana Univ. Math. J. 22(1973),
1005-1015.

\bibitem{Gr}
P. Grisvard, Elliptic problems in nonsmooth domains, volume 24 of Monographs and Studies in Mathematics.
Pitman (Advanced Publishing Program), Boston, MA, 1985.





\bibitem{HAES}
J. Huisman, M. Array¨¢s, U. Ebert, B. Sommeijer, How do sinking phytoplankton species manage to
persist, Am. Nat. 159(2002), 245-254.

\bibitem{JHSL}
Y. Jin, F.M. Hilker, P.M. Steffler, M.A. Lewis, Seasonal invasion dynamics in a spatially
heterogeneous river with fluctuating flows, Bull. Math. Biol. 76(2014), 1522-1565.



\bibitem{LOS}
A.A. Lacey, J.R. Ockendon, J. Sabina, Multidimensional reaction diffusion equations with nonlinear boundary conditions,
SIAM J. Appl. Math. 58(1998), 1622-1647.






\bibitem{LL}
Y. Lou, F. Lutscher, Evolution of dispersal in open advective environments,
J. Math. Biol. 69(2014), 1319-1342.

\bibitem{LLM}
F. Lutscher, M.A. Lewis, E. McCauley, Effects of heterogeneity on spread and persistence in
rivers, Bull Math Biol. 68(2006), 2129-2160.

\bibitem{LPL}
F. Lutscher, E. Pachepsky, M.A. Lewis, The effect of dispersal patterns on stream populations,
SIAM Rev. 47(2005), 749-772.

\bibitem{MJJL}
H.W. McKenzie, Y. Jin, J. Jacobsen, M.A. Lewis, $R^0$ analysis of a spatiotemporal model for a stream
population, SIAM J. Appl. Dyn. Syst. 11(2012), 567-596.

\bibitem{PP}
K. Pankrashkin, N. Popoff, An effective Hamiltonian for the eigenvalue asymptotics of the Robin Laplacian with a large parameter,
J. Math. Pures Appl. 106(2016), 615-650.


\bibitem{PZ1}
R. Peng, X.-Q. Zhao,
A nonlocal and periodic reaction-diffusion-advection model of a single phytoplankton species,
J. Math. Biol. 72(2016), 755-791.

\bibitem{PZ2}
R. Peng, X.-Q. Zhao, Effects of diffusion and advection on the principal
eigenvalue of a periodic-parabolic problem with applications, Calc. Var.
Partial Differential Equations, 54(2015), 1611-1642.

\bibitem{PZh}
R. Peng, M. Zhou, Effects of large degenerate advection and boundary conditions on the principal eigenvalue and
its eigenfunction of a linear second order elliptic operator, Indiana Univ. Math. J. (2018), in press.

\bibitem{PAB}
N.L. Poff, J.D. Allan, M.B. Bain, J.R. Karr, K.L. Prestegaard, B.D. Richter, et al. The natural
flow regime: a paradigm for river conservation and restoration, BioScience 47(1997), 769-784.

\bibitem{SG}
D.C. Speirs, W.S.C. Gurney, Population persistence in rivers and estuaries, Ecology 82(2001), 1219-1237.

\bibitem{VL1}
O. Vasilyeva, F. Lutscher, Population dynamics in rivers: analysis of steady states, Can. Appl. Math. Q.
18(2011), 439-469.

\bibitem{VL2}
O. Vasilyeva, F. Lutscher,  Competition in advective environments, Bull. Math. Biol. 74(2012), 2935-2958.

\bibitem{Ve1}
A.D. Ventcel', The asymptotic behavior of the largest eigenvalue of a second order elliptic differential operator with a small parameter multiplying the highest derivatives. (Russian) Dokl. Akad. Nauk SSSR 202(1972), 19-22.

\bibitem{Ve2}
A.D. Ventcel', The asymptotic behavior of the eigenvalues of matrices with elements of the order $\exp\{-V_{ij} /(2\epsilon^2)\}$. (Russian)
Dokl. Akad. Nauk SSSR 202(1972), 263-265.

\bibitem{W}
A.D. Wentzell, On the asymptotic behavior of the first eigenvalue of
a second order differential operator with small parameter in higher
derivatives, Theory Prob. Appl. 20(1975), 599-602.
\end{thebibliography}
\end {document}